\documentclass[11pt,reqno]{amsart}

\usepackage[english]{babel}
\usepackage{amsmath, amsthm, amssymb, color}
\usepackage[colorlinks=true,linkcolor=blue,urlcolor=blue]{hyperref}
\usepackage{graphicx}
\usepackage{blkarray}
\usepackage{caption}
\usepackage{subcaption}
\usepackage{mathtools}
\usepackage{enumerate}
\usepackage{verbatim}
\usepackage{tikz,tikz-cd,tikz-3dplot}
\usepackage{amssymb}
\usetikzlibrary{matrix}
\usetikzlibrary{arrows}
\usepackage{longtable}
\usepackage{algorithm}
\usepackage[noend]{algpseudocode}
\usepackage[normalem]{ulem}
\usepackage{color}

\oddsidemargin \evensidemargin
\usepackage{makecell}
\usepackage{array}
\usetikzlibrary{decorations.pathmorphing}

\DeclareCaptionLabelFormat{subfiglabel}{(#2)}
\newtheorem{theorem}{Theorem}
\newtheorem{proposition}[theorem]{Proposition}
\newtheorem{lemma}[theorem]{Lemma}

\theoremstyle{definition}
\newtheorem{definition}[theorem]{Definition}
\newtheorem{remark}[theorem]{Remark}
\newtheorem{example}[theorem]{Example}

\numberwithin{theorem}{section}
\theoremstyle{plain}

\newcommand{\PP}{\mathbb{P}}
\newcommand{\RR}{\mathbb{R}}

\DeclareMathOperator{\trop}{trop}
\DeclareMathOperator{\SGr}{SGr}
\DeclareMathOperator{\Gr}{Gr}
\DeclareMathOperator{\Dr}{Dr}

\title{Mustafin degenerations of six points in the projective plane and self-duality}

\author{Alheydis Geiger}
\address{Max-Planck-Institute for Mathematics in the Sciences, Inselstr. 22, 04103 Leipzig}
\email{geiger@mis.mpg.de}

\author{Luca Schaffler}
\address{Dipartimento di Matematica e Fisica, Universit\`a degli Studi Roma Tre, Largo San Leonardo Murialdo 1, 00146, Roma, Italy}
\email{luca.schaffler@uniroma3.it}

\begin{document}

\begin{abstract}
Mustafin varieties provide a framework for degenerating the projective plane together with marked points, governed by the combinatorics of Bruhat--Tits buildings. Such limit objects are parametrized by the boundary points of a projective compactification of the moduli space of $n$ points in $\mathbb{P}^2$ constructed by Schaffler--Tevelev. In this paper, we study the locus of this compactification generically parametrizing self-dual configurations, or equivalently, six points lying on a conic. We prove that this locus is a tropical compactification of the moduli space of self-dual configurations, with associated fan a refinement of the self-dual tropical Grassmannian introduced by Geiger--Hashimoto--Sturmfels--Vlad. We show that this compactification is a blow up of $\overline{\mathrm{M}}_{0,6}$ and we classify the objects it parametrizes: degenerations of $\mathbb{P}^2$ equipped with a stable $6$-pointed rational curve that records the limiting conic.
\end{abstract}

\maketitle

\section{Introduction}

Degenerations of point configurations on algebraic varieties give rise to compact moduli spaces~\cite{FM94,CGK09,GR17}. In this paper, we turn to the case of the moduli space $\mathbf{B}(3,n)$ of $n$ marked points in $\mathbb{P}^2$ up to automorphism, and more specifically to the compactification $\overline{\mathbf{X}}_{\mathrm{GP}}(3,n)$ constructed in \cite{ST22} --- originally proposed by Gerritzen and Piwek in \cite{GP91} --- that arises by degenerating the $n$-pointed projective plane using \emph{Mustafin varieties}. Roughly, these are one-parameter degenerations of projective space that are controlled by the combinatorics of the Bruhat--Tits building \cite{CHSW11}. Such degenerations can be appropriately endowed with $n$ marked points by considering the Mustafin variety associated to the \emph{stable lattices} of a one-parameter family of $n$ linearly general points in $\mathbb{P}^2$ (see Section~\ref{sec:preliminaries}).

The compactification $\overline{\mathbf{X}}_{\mathrm{GP}}(3,n)$ is a birational modification of Kapranov's Chow quotient compactification of the moduli space of $n$ lines in $\mathbb{P}^2$ \cite{Kap93,KT06}. Moreover, for $n=6$, it was shown in \cite{ST22} that $\overline{\mathbf{X}}_{\mathrm{GP}}(3,6)^\nu$ (for a reduced scheme $X$, let $X^{\nu}$ be its normalization) is the tropical compactification \cite{Tev07} associated to a specific refinement of the tropical Grassmannian $\trop(\mathrm{Gr}(3,6)^\circ)$ \cite{SS04}. The case of six points in $\PP^2$ is particularly rich because the points may be self-dual (\cite{Cob22,DO88}), or, equivalently, lie on a smooth conic. In the current paper we focus on the intersection of both these perspectives and investigate how self-duality degenerates at the boundary of $\overline{\mathbf{X}}_{\mathrm{GP}}(3,6)$.

Specifically, let $\mathbf{S}(3,6)\subseteq\mathbf{B}(3,6)$ be the closed subset of self-dual point configurations and denote by $\overline{\mathbf{S}}_{\mathrm{GP}}(3,6)$ its Zariski closure in $\overline{\mathbf{X}}_{\mathrm{GP}}(3,6)$. The next statement is a summary of our main results.

\begin{theorem}[{Theorem~\ref{thm:SGP36} and Theorem~\ref{thm:main-theorem-about-family-of-degenerate-conics}}]
The following hold.
\begin{enumerate}

\item The compactification $\overline{\mathbf{S}}_{\mathrm{GP}}(3,6)^\nu$ is tropical, with associated fan $\widehat{\Sigma}^{\mathrm{sd}}(3,6)$: see Definition~\ref{def:sd_refined_fan} and Proposition~\ref{prop:sd_refined_fan}. The correspondence between the boundary strata of $\overline{\mathbf{S}}_\mathrm{GP}(3,6)^\nu$ and the cones of $\widehat{\Sigma}^{\mathrm{sd}}(3,6)$ is spelled out in Table~\ref{tab:correspondencekapranov'sstrataandcones_selfdual}.

\item There exists a proper flat family of surfaces $\overline{\mathcal{M}}^{\mathrm{sd}}(3,6)\rightarrow\overline{\mathbf{S}}_{\mathrm{GP}}(3,6)^\nu$ with a divisor $\overline{\mathcal{D}}^{\mathrm{sd}}(3,6)\subseteq\overline{\mathcal{M}}^{\mathrm{sd}}(3,6)$ relative over $\overline{\mathbf{S}}_{\mathrm{GP}}(3,6)^\nu$ together with disjoint sections $s_i^{\mathrm{sd}}\colon\overline{\mathbf{S}}_{\mathrm{GP}}(3,6)^\nu\rightarrow\overline{\mathcal{D}}^{\mathrm{sd}}(3,6)$, $i=1,\ldots,6$, such that, over a point $x$, the fiber
\[
(\overline{\mathcal{M}}^{\mathrm{sd}}(3,6)_x,\overline{\mathcal{D}}^{\mathrm{sd}}(3,6)_x;s_1^{\mathrm{sd}}(x),\ldots,s_6^{\mathrm{sd}}(x))=(X,C;p_1,\ldots,p_6)
\]
is described as follows:

\begin{itemize}

\item If $x\in\mathbf{S}(3,6)$, then $X=\mathbb{P}^2$ and $C$ is the smooth conic passing through $p_1,\ldots,p_6$;

\item Otherwise, $(X;p_1,\ldots,p_6)$ is a $6$-pointed Mustafin degeneration of $\mathbb{P}^2$ (see Definition~\ref{def:n-pointed-Mustafin-degeneration}) and $C\subseteq X$ is a reduced curve interpolating $p_1,\ldots,p_6$ such that $(C;p_1,\ldots,p_6)$ is a stable $6$-pointed rational curve. The possibilities for $(X,C;p_1,\ldots,p_6)$ are listed in Tables~\ref{tbl:GPdegenerationsself-dual-1} and \ref{tbl:GPdegenerationsself-dual-2}.
\end{itemize}

\end{enumerate}
\end{theorem}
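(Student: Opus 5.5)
The plan is to deduce part (1) from the tropical description of $\overline{\mathbf{X}}_{\mathrm{GP}}(3,6)^\nu$ in \cite{ST22}, and to build the family in part (2) by restricting the universal Mustafin family and adding the limiting conic. For (1), recall that $\overline{\mathbf{X}}_{\mathrm{GP}}(3,6)^\nu$ is the tropical compactification of $\mathbf{B}(3,6)\subseteq T$ (the torus of the Plücker embedding modulo the diagonal action) associated to a refinement $\widehat{\Sigma}(3,6)$ of $\trop(\Gr(3,6)^\circ)$. Then $\mathbf{S}(3,6)$ is a closed very affine subvariety of the same torus. By Tevelev's criterion \cite{Tev07}, its closure in the toric variety $X_{\widehat{\Sigma}(3,6)}$ is a tropical compactification exactly when the closure is proper and the multiplication map $\overline{\mathbf{S}}\times T\to X$ is flat; the latter is automatic on the restriction of the fan to $\trop(\mathbf{S}(3,6))$ once that support is a union of cones. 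So the first step is to identify $\trop(\mathbf{S}(3,6))$ with the self-dual tropical Grassmannian of Geiger--Hashimoto--Sturmfels--Vlad, presumably the fixed locus of the tropicalized Gale/duality involution intersected with $\trop(\Gr(3,6)^\circ)$. The second step is to define $\widehat{\Sigma}^{\mathrm{sd}}(3,6)$ as the set of cones $\sigma\cap\trop(\mathbf{S}(3,6))$ for $\sigma\in\widehat{\Sigma}(3,6)$. The third step is to check, cone by cone over the $S_6$-orbit representatives, that this is a fan whose cones are honest intersections and not subdivided further; this is the content of Proposition~\ref{prop:sd_refined_fan}. Properness then follows, since the closure of $\mathbf{S}(3,6)$ in $X_{\widehat{\Sigma}(3,6)}$ is closed in a complete-support situation.

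For tropicality itself I would use a parametrization. Six points on a conic correspond to six points on $\PP^1$ via the Veronese map, so $\mathbf{S}(3,6)\cong M_{0,6}$. The Plücker coordinates are then products of differences $(z_i-z_j)$ up to a monomial, namely Vandermonde-type $3\times3$ minors. Hence $\mathbf{S}(3,6)\hookrightarrow T$ factors through a monomial map from the intrinsic torus of $M_{0,6}$. In particular $\trop(\mathbf{S}(3,6))$ is the linear image of the space of phylogenetic trees $\trop(M_{0,6})$, and $\widehat{\Sigma}^{\mathrm{sd}}(3,6)$ pulls back to a refinement of the tree fan. Because $\overline{M}_{0,6}$ is a tropical compactification and the refined fan subdivides its fan, the associated compactification is a toric blow-up of $\overline{M}_{0,6}$. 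This yields tropicality (schön-ness is inherited from $M_{0,6}$) and the blow-up statement at once. Normalization enters because the closure in $\overline{\mathbf{X}}_{\mathrm{GP}}$ may fail to be normal, but its normalization agrees with the normal tropical compactification. Finally, the correspondence in Table~\ref{tab:correspondencekapranov'sstrataandcones_selfdual} is read off by matching each cone with the stratum of $\overline{\mathbf{X}}_{\mathrm{GP}}(3,6)$ that contains it.

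For (2), I would pull back the universal family of $6$-pointed Mustafin degenerations from \cite{ST22} to obtain $\overline{\mathcal{M}}^{\mathrm{sd}}(3,6)$; it is flat and proper by base change. To define $\overline{\mathcal{D}}^{\mathrm{sd}}(3,6)$, take the closure of the universal conic over $\mathbf{S}(3,6)$. For flatness and reducedness of the fibers, pull back further to $\overline{M}_{0,6}$-type data and map the universal stable curve into the Mustafin family, fiberwise through the Veronese embeddings of its components. Concretely, over each one-parameter degeneration the stable lattices of the points on $\PP^1$ induce lattices in $\mathrm{Sym}^2$, and the closure of the conic in the Mustafin variety is the image of the stable-curve model. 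The main obstacle is the case analysis: for each cone type one must compute the Mustafin degeneration explicitly and verify that the limiting conic is reduced, passes through the marked points, and is the stable $6$-pointed curve, with no extra or non-reduced components and no contracted components. I would handle this by choosing explicit representative one-parameter families for each orbit of cones, computing the lattice configurations in the building, and tracking the conic's components in each Mustafin component. This produces Tables~\ref{tbl:GPdegenerationsself-dual-1} and \ref{tbl:GPdegenerationsself-dual-2}. Where the map from the blown-up $\overline{M}_{0,6}$ contracts components of the stable curve, one would instead verify that the fiber of the closure is the image curve and that it remains stable.
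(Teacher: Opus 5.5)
Your part (1) is essentially the paper's argument. The Veronese (Pachter--Speyer) isomorphism $\mathbf{S}(3,6)\cong\mathrm{M}_{0,6}$ makes $\mathbf{S}(3,6)$ sch\"on, and $\Sigma^{\mathrm{sd}}(3,6)$ is identified with $\trop(\Gr(2,6)^\circ)$. Refinements stay tropical and normal. The closure of $\mathbf{S}(3,6)$ in $\widehat{\mathbf{X}}(3,6)$ maps finitely and birationally onto $\overline{\mathbf{S}}_{\mathrm{GP}}(3,6)$, so Zariski's Main Theorem identifies the normalization. (Flatness of the multiplication map does not follow from the support condition alone; it is the sch\"on property you invoke afterwards that gives it.)

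The genuine gap is in (2).

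\textbf{Taking the closure.} Over $\mathrm{Spec}(R)$ the closure of the generic conic is automatically flat, but over the three-dimensional base $\overline{\mathbf{S}}_{\mathrm{GP}}(3,6)^\nu$ the closure of the universal conic need not be. Its fiber over $x$ is set-theoretically the union of the limits $\overline{\mathcal{C}}_0$ over \emph{all} arcs through $x$. A priori it can have extra components or non-reduced structure.

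\textbf{Your device for controlling this does not work as stated.} It maps the universal stable curve in ``through Veronese embeddings of its components'', using $\mathrm{Sym}^2M$ for $M$ the stable lattices of the points on $\mathbb{P}^1$. But $\Sigma_{\mathbf{a}}$ contains lattices not of this form. In Example~\ref{ex:deg_conic} (conic $x_1x_2=tx_3^2$, type \#2), the stable lattice $e_1R+e_2R+e_3R$ is not $\mathrm{Sym}^2M$ up to homothety. In $\mathbb{P}(\mathrm{Sym}^2M)$ the closure of the conic is the Veronese image of $\mathbb{P}(M)$, a smooth conic over $R$, whereas in $\mathbb{P}(e_1R+e_2R+e_3R)$ it specializes to the line pair $x_1x_2=0$. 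Nor are the components of $C$ Veronese images. They are lines, strict transforms of lines or conics, and $(1,1)$-curves, and some components of $X$ meet $C$ in a single point or not at all. So no global morphism from the universal curve into $\overline{\mathcal{M}}^{\mathrm{sd}}(3,6)$ is actually constructed. Still less is it shown to be a closed embedding on fibers, which is what you would need to get flatness and reduced fibers.

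\textbf{The case analysis.} Computing one explicit arc per orbit of cones only describes the limit along that arc. The strata are positive-dimensional, and the theorem needs, for \emph{every} $x$ and \emph{every} arc with limit $x$, that $\overline{\mathcal{C}}_0$ is reduced and independent of the arc.

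\textbf{How the paper does it.} It gets this uniformly in Theorem~\ref{thm:1parameterfamilyconics} via the forgetful morphisms $\rho_i\colon\mathbb{P}(\Sigma_{\mathbf{a}})\to\mathbb{P}(\Sigma_{\pi_i(\mathbf{a})})$ of Proposition~\ref{prop:forgetfulmorphism}. There $\overline{\mathcal{C}}_0$ is reconstructed from the known $5$-point degenerate conics $\overline{\rho_i(\mathcal{C})}_0$, which depend only on $f_i(x)$. It then avoids flatness of a closure altogether:
\begin{enumerate}
\item It builds $\overline{\mathbf{S}}^{\mathrm{c}}_{\mathrm{GP}}(3,6)$ inside $\overline{\mathbf{B}}(3,6)\times\mathbf{H}^{h_1}_S\times\mathbf{H}^{h_2}_S$, so the curve family is pulled back from a universal family and is flat by construction.
\item Uniqueness of flat limits over $\mathrm{Spec}(R)$ identifies its fibers with the curves $\overline{\mathcal{C}}_0$.
\item Arc-independence makes $\overline{\mathbf{S}}^{\mathrm{c}}_{\mathrm{GP}}(3,6)\to\overline{\mathbf{S}}_{\mathrm{GP}}(3,6)$ bijective, so Zariski's Main Theorem lets one pull everything back to $\overline{\mathbf{S}}_{\mathrm{GP}}(3,6)^\nu$.
\end{enumerate}

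\textbf{If you keep your strategy.} You need an honest global morphism. One candidate is, for each quadruple, the forgetful map $\overline{\mathrm{M}}_{0,7}\to\overline{\mathrm{M}}_{0,5}$ followed by Kapranov's blow-down $\overline{\mathrm{M}}_{0,5}\to\mathbb{P}^2$, pulled back along your toric morphism to $\overline{\mathrm{M}}_{0,6}$. You would also need a proof that this map is a closed embedding on every fiber.
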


To prove the first part of the theorem, the starting point is understanding the tropical fan describing $\overline{\mathbf{S}}_{\mathrm{GP}}(3,6)^\nu$. On the one hand, we have the tropical fan $\widehat{\Sigma}(3,6)$ associated to $\overline{\mathbf{X}}_{\mathrm{GP}}(3,6)^\nu$ in \cite[Section~12]{ST22}, whose cones are listed explicitly in Table~\ref{tab:correspondencekapranov'sstrataandcones_refinement}. On the other hand, we have the tropical self-dual Grassmannian $\mathrm{trop}(\mathrm{SGr(3,6)}^\circ)$ studied in \cite{GHSV24}. We define $\widehat{\Sigma}^{\mathrm{sd}}(3,6)$ as the common refinement of these two fans, which induces a normal tropical compactification $\widehat{\mathbf{S}}(3,6)$ of $\mathbf{S}(3,6)$ (see Proposition~\ref{prop:widehatS(3,6)-is-normal-and-tropical}). Then, we prove that $\widehat{\mathbf{S}}(3,6)$ is isomorphic to the normalization of $\overline{\mathbf{S}}_{\mathrm{GP}}(3,6)$ in Theorem~\ref{thm:SGP36}.

Although the existence of a flat family of conics through the six marked points is clear over the interior $\mathbf{S}(3,6)$, it is not clear a priori how to extend it to the compactification $\overline{\mathbf{S}}_{\mathrm{GP}}(3,6)$. One possible solution is to take the Zariski closure of the generic family of conics inside the family of $6$-pointed Mustafin degenerations $\overline{\mathcal{M}}(3,6)\rightarrow\overline{\mathbf{X}}_{\mathrm{GP}}(3,6)$ (see Theorem~\ref{thm:main-on-XbarGP(3,n)}), but this may not give expected properties such as flatness and reduced fibers. Instead, we first control how the closure behaves for one-parameter families: see Section~\ref{sec:one-parameter-Mustafin-degens-and-conics}. In Figure~\ref{fig:example_degeneration} we showcase an example of these one-parameter degenerations. With a full understanding of the degenerating behavior in one-parameter families, we construct $\overline{\mathcal{D}}^{\mathrm{sd}}(3,6)\rightarrow\overline{\mathbf{S}}_{\mathrm{GP}}(3,6)^\nu$ globally by appropriately using the theory of \emph{multigraded Hilbert schemes} \cite{HS04}. This theory was already employed in \cite{ST22} to construct, more in general, $\overline{\mathcal{M}}(3,n)\rightarrow\overline{\mathbf{X}}_{\mathrm{GP}}(3,n)$. The need for normalization is technical and linked to the use of multigraded Hilbert schemes: see Section~\ref{sec:construction-of-the-global-family-of-degenerate-conics}, and especially Proposition~\ref{prop:barSGPc(3,6)-and-barSGP(3,6)-have-iso-normalizations}.

\begin{figure}
\begin{center}
\begin{tikzpicture}[scale=0.5]

\draw[line width=1pt] (-9,6) -- (-3,6);
\draw[line width=1pt] (-3,6) -- (-3,0);
\draw[line width=1pt] (-3,0) -- (-9,6);

\begin{scope}[rotate around={-30:(-5,4.2)}]

\draw[magenta,line width=1.5pt]
(-5,4.2) ellipse (1.4 and 0.9);

\fill   (5.4-9,4.2)  circle (4pt);
\fill	(4.7-9,4.979) circle (4pt);
\fill	(3.3-9,4.979) circle (4pt);
\fill	(2.6-9,4.2) circle (4pt);
\fill	(3.3-9,3.421) circle (4pt);
\fill	(4.7-9,3.421)circle (4pt);

\node[right] at (5.4-9,4.2) {\footnotesize{$4$}};
\node[above right] at (4.7-9,4.979) {\footnotesize{$3$}};
\node[above left] at (3.3-8.65,5.1)  {\footnotesize{$2$}};
\node[left] at (2.6-9,4.2)   {\footnotesize{$1$}};
\node[below left] at (3.3-8.7,3.621)  {\footnotesize{$6$}};
\node[below right] at (4.7-9.2,3.2) {\footnotesize{$5$}};
\end{scope}

\draw[->,thick,decorate,decoration={snake,amplitude=1mm,segment length=8mm}] (-1.7,3) -- (2.2,3) node[midway, above] {\footnotesize{degenerate}};

\draw[line width=1pt] (0+2,6) -- (6+2,6);
\draw[line width=1pt] (6+2,6) -- (6+2,0);
\draw[line width=1pt] (6+2,0) -- (0+2,6);
\draw[line width=1pt] (4.5+2,6) -- (4.5+2,1.5);
\draw[line width=1pt] (1.5+2,4.5) -- (6+2,4.5);
\fill[gray!50,nearly transparent] (1.5+2,4.5) -- (4.5+2,4.5) -- (4.5+2,1.5) -- cycle;
\fill[gray!50,nearly transparent] (1.5+2,4.5) -- (4.5+2,4.5) -- (4.5+2,6) -- (0+2,6) -- cycle;
\fill[gray!50,nearly transparent] (4.5+2,1.5) -- (4.5+2,4.5) -- (6+2,4.5) -- (6+2,0) -- cycle;

\node at (-0.4+2,6) {\footnotesize{$1$}};
\node at (1.8+2,5.1) {\footnotesize{$2$}};
\node at (3.1+2,5.1) {\footnotesize{$3$}};
\node at (6.4+2,0) {\footnotesize{$4$}};
\node at (5.1+2,1.6) {\footnotesize{$5$}};
\node at (5.2+2,3.2) {\footnotesize{$6$}};

\draw[magenta,line width=1.5pt] (0+2,6) to[out=355, in=180] (1.5+2,5.5) to[out=45, in=160] (3+2,5.6) to[out=350, in=180] (4.5+2,4.5) to[out=90, in=355] (4+2,5);

\draw[magenta,line width=1.5pt] (5+2,4) to[out=160, in=270] (4.5+2,4.5) to[out=0, in=75] (5.6+2,3) to[out=255, in=115] (5.5+2,1.5)  to[out=270, in=90] (6+2,0);

\fill (6+2,0) circle (4pt);
\fill (0+2,6) circle (4pt);
\fill (5.5+2,1.5) circle (4pt);
\fill (5.6+2,3) circle (4pt);
\fill (1.5+2,5.5) circle (4pt);
\fill (3+2,5.6) circle (4pt);
\end{tikzpicture}
\end{center}
\caption{Mustafin degeneration of $\mathbb{P}^2$ with $6$ self-dual marked points with the limit conic. For an explanation of the picture, see Remark~\ref{rmk:pictures-explained} and Example~\ref{ex:deg_conic}.}
\label{fig:example_degeneration}
\end{figure}

The family of stable $6$-pointed rational curves $\overline{\mathcal{D}}^{\mathrm{sd}}(3,6)\rightarrow\overline{\mathbf{S}}_{\mathrm{GP}}(3,6)^\nu$ gives a morphism $\overline{\mathbf{S}}_{\mathrm{GP}}(3,6)^\nu\rightarrow\overline{\mathrm{M}}_{0,6}$, which we describe.

\begin{theorem}[{Theorem~\ref{thm:birat-morph-to-M06bar-and-exc-loci}}]
There exists a birational morphism  $g\colon\overline{\mathbf{S}}_{\mathrm{GP}}(3,6)^\nu\rightarrow\overline{\mathrm{M}}_{0,6}$ extending the natural isomorphism on the interiors $\mathbf{S}(3,6)\cong\mathrm{M}_{0,6}$. The exceptional loci are the boundary strata of $\overline{\mathbf{S}}_{\mathrm{GP}}(3,6)^\nu$ of type \#12, \#20, \#23, \#20.2, \#20.6 in Table~\ref{tbl:GPdegenerationsself-dual-1}.
\end{theorem}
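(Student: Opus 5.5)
The plan is to obtain $g$ as the classifying morphism of the family of stable $6$-pointed rational curves from the previous theorem. By Theorem~\ref{thm:main-theorem-about-family-of-degenerate-conics}, $\overline{\mathcal{D}}^{\mathrm{sd}}(3,6)\rightarrow\overline{\mathbf{S}}_{\mathrm{GP}}(3,6)^\nu$ is flat and proper, with disjoint sections $s_1^{\mathrm{sd}},\ldots,s_6^{\mathrm{sd}}$, and every fiber is a stable $6$-pointed rational curve. Since $\overline{\mathrm{M}}_{0,6}$ is a fine moduli space, this family induces a morphism $g\colon\overline{\mathbf{S}}_{\mathrm{GP}}(3,6)^\nu\rightarrow\overline{\mathrm{M}}_{0,6}$. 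Over $\mathbf{S}(3,6)$ the fiber is the smooth conic through $p_1,\ldots,p_6$, so $g$ restricts to the classical isomorphism $\mathbf{S}(3,6)\cong\mathrm{M}_{0,6}$ (six points on a conic $\cong\PP^1$ up to $\mathrm{PGL}_3$ correspond to six points on $\PP^1$ up to $\mathrm{PGL}_2$). Hence $g$ is birational, and it is surjective because its source is proper and irreducible.

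Alternatively, and to identify the exceptional loci, I would work tropically. By Theorem~\ref{thm:SGP36}, $\overline{\mathbf{S}}_{\mathrm{GP}}(3,6)^\nu\cong\widehat{\mathbf{S}}(3,6)$ is the tropical compactification for the fan $\widehat{\Sigma}^{\mathrm{sd}}(3,6)$. Also, $\overline{\mathrm{M}}_{0,6}$ is the tropical compactification of $\mathrm{M}_{0,6}$ for the space of phylogenetic trees $\trop(\mathrm{M}_{0,6})$. The isomorphism $\mathrm{M}_{0,6}\cong\mathbf{S}(3,6)$ is induced by a monomial map of tori: the Veronese embedding expresses the Plücker coordinates $p_{ijk}$ of six points on a conic as Vandermonde products $\prod(x_a-x_b)$. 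This monomial map gives a linear map $\trop(\mathrm{M}_{0,6})\rightarrow\trop(\SGr(3,6)^\circ)$ identifying supports, as in \cite{GHSV24}. Under it, each cone of $\widehat{\Sigma}^{\mathrm{sd}}(3,6)$ maps into a cone of the tree fan, because $\widehat{\Sigma}^{\mathrm{sd}}(3,6)$ refines $\trop(\SGr(3,6)^\circ)$, which I would check from the explicit cone list. The standard theory of toric morphisms restricted to tropical compactifications \cite{Tev07} then gives a proper birational morphism that agrees with $g$ on the interior, hence everywhere by separatedness.

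The exceptional locus is then read off from the refinement. A boundary stratum $O_\sigma$ is contracted exactly when $\sigma$ is not a cone of the tree fan, but lies in the relative interior of some larger tree cone $\tau$ with $\dim\tau>\dim\sigma$. In that case $O_\sigma$, of dimension $3-\dim\sigma$, maps onto $O_\tau$, of smaller dimension. This includes the new rays that subdivide cones of the tree fan, whose divisors map to lower-dimensional strata. Going through Table~\ref{tab:correspondencekapranov'sstrataandcones_selfdual}, I expect the cones not appearing in $\trop(\mathrm{M}_{0,6})$ to be exactly those of types \#12, \#20, \#23, \#20.2 and \#20.6. As a check, one can compare these with the corresponding degenerations in Table~\ref{tbl:GPdegenerationsself-dual-1}: there the curve $C$ has the same dual tree as in a larger stratum, and the extra moduli lie in the Mustafin surface $X$, not in $C$.

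The main obstacle is the combinatorial verification: that the linear identification sends every cone of $\widehat{\Sigma}^{\mathrm{sd}}(3,6)$ into a tree cone, and which cones are genuinely new. This requires matching coordinates carefully, because the tropical self-dual Grassmannian is coarser than, or at best equal to, the tree fan only up to lineality. One must also make sure the contraction is not an artifact of normalization, which is automatic since everything here is normal.
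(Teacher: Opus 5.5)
Your construction of $g$ as the classifying map of $(\overline{\mathcal{D}}^{\mathrm{sd}}(3,6);s_1^{\mathrm{sd}},\ldots,s_6^{\mathrm{sd}})$ is what the paper does. So is your check that $g$ is an isomorphism on $\mathbf{S}(3,6)$, which uses that automorphisms of a pointed smooth conic extend to $\mathrm{PGL}_3$.

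You differ from the paper in how you identify the exceptional strata. The paper's route is as follows:
\begin{itemize}
\item It computes $\dim S$ from the cone, and $\dim g(S)$ from the dual tree of the generic degenerate conic in Theorem~\ref{thm:1parameterfamilyconics}. For example, for \#20 that tree is the snowflake curve, a point of $\overline{\mathrm{M}}_{0,6}$.
\item It treats \#20.2 and \#20.6 by containment in a \#20 stratum, and \#20.3 and \#23.1 by dimension.
\item Only for \#12.1, where the dimension count is inconclusive, it uses the fan isomorphism $\Sigma^{\mathrm{sd}}(3,6)\cong\trop(\Gr(2,6)^\circ)$. It combines this with the fact that the inverse of $g$ on the interior is the Pachter--Speyer isomorphism.
\end{itemize}

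Your route applies that last argument to every cone. You take $g$ to be the morphism induced by the refinement $\widehat{\Sigma}^{\mathrm{sd}}(3,6)\to\Sigma^{\mathrm{sd}}(3,6)$. Then $O_\sigma$ is contracted if and only if the smallest tree cone $\tau\supseteq\sigma$ has $\dim\tau>\dim\sigma$. A C ray is the midpoint of an FF cone and a D ray is the barycenter of an FFF cone. The relative interiors of CE$^{\mathrm{sd}}$, DF and CD lie in those of E$^{\mathrm{sd}}$FF, FFF and FFF respectively. So your criterion yields exactly \#12, \#20, \#23, \#20.2, \#20.6 in one stroke, without using the classification of limit conics.

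The paper's route also gives the geometric meaning of each $g(S)$, and it needs the toric identification only once. Yours is more uniform but rests entirely on that identification. You should therefore state that the Pachter--Speyer monomial map identifies the relevant lattices, not just the real fans, so that the refinement really induces a morphism to $\overline{\mathrm{M}}_{0,6}$. The paper also passes over this point.

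One correction. Your sentence ``the cones not appearing in $\trop(\mathrm{M}_{0,6})$ are exactly those of types \#12, \#20, \#23, \#20.2, \#20.6'' is false as written. CF (\#12.1), CDF (\#20.3) and CE$^{\mathrm{sd}}$F (\#23.1) are not tree cones either. They are not contracted because each has the same dimension as the tree cone containing it. The criterion that actually works is the dimension comparison you stated first, and \#12.1 is exactly where the two formulations give different answers.
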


Thus, the compact moduli space $\overline{\mathbf{S}}_{\mathrm{GP}}(3,6)^\nu$ yields a birational modification of $\overline{\mathrm{M}}_{0,6}$ with geometric interpretation in terms of algebraic surfaces marked by a curve together with six marked~points.

\begin{remark}
We started from the point of view of compact moduli spaces of pointed algebraic varieties. From a different perspective, compact moduli spaces also arise by considering the extra datum of a divisor on the parametrized algebraic varieties: for instance, this is the approach taken by the Koll\'ar, Shepherd-Barron, Alexeev compactification \cite{KSB88,Ale96,Kol23}. The compact moduli space $\overline{\mathbf{S}}_{\mathrm{GP}}(3,6)^\nu$ provides an example that interpolates between the marked points and marked divisor perspectives, as the objects $(X,C;p_1,\ldots,p_6)$ parametrized by $\overline{\mathbf{S}}_{\mathrm{GP}}(3,6)$ realize a flag $p_1,\ldots,p_6\in C\subseteq X$. (We clarify that in our case $(X,C)$ is never KSBA-stable.)
\end{remark}

\subsection*{Acknowledgements}
We would like to thank Bernd Sturmfels for raising the question that motivated the writing of this paper.

\subsection*{Funding}
A. Geiger was funded by the Deutsche Forschungsgemeinschaft (DFG, German Research Foundation) – project number 539846931, within the SPP 2458~\emph{Combinatorial Synergies}. L. Schaffler was supported by the projects ``Programma per Giovani Ricercatori Rita Levi Montalcini'', PRIN2020KKWT53 ``Curves, Ricci flat Varieties and their Interactions'', and PRIN 2022 ``Moduli Spaces and Birational Geometry'' – CUP E53D23005790006. L. Schaffler is a member of the INdAM group GNSAGA and is supported by the INdAM - GNSAGA Project ``Birational geometry of compact moduli spaces via stable pairs and foliations'' – CUP E53C25002010001.

\section{Preliminaries}
\label{sec:preliminaries}

We recall the necessary background on Mustafin varieties, stable lattices, and the needed results from \cite{ST22}. At the same time, we expand more on some of these results as we need the details for our purposes. More precisely, in Section~\ref{sec:Mustafin-comp-for-5-points} we extrapolate from the proof of \cite[Theorem~7.9]{ST22} the family of $5$-pointed degenerate conics over $\overline{\mathbf{X}}_{\mathrm{GP}}(3,5)$.

\subsection{Mustafin varieties}

Let us fix $R$ to be the discrete valuation ring $\mathbb{C}[[t]]$. Denote by $K$ and by $\Bbbk$ the quotient field and the residue field of $R$, respectively. By a \emph{lattice} we mean a free $R$-submodule $L\subseteq K^3$ of rank $3$. For a lattice $L$, we define $\mathbb{P}(L)=\mathrm{Proj}(\mathrm{Sym}(L^\vee))$, with $L^\vee=\mathrm{Hom}_R(L,R)$. We have that $\mathbb{P}(L)\cong\mathbb{P}_R^2$, hence $\mathbb{P}(L)$ should be thought of as a $\mathbb{P}^2$-bundle over the curve $\mathrm{Spec}(R)$. We have a natural open embedding $\mathbb{P}_K^2\subseteq\mathbb{P}(L)$ that comes from the identification $\mathbb{P}_K^2\cong\mathbb{P}(L)_K$ induced by the isomorphism of $K$-algebras $K^3\cong L\otimes_RK$. 

For our purposes, we consider lattices up to rescaling by a nonzero element of $K$. We denote by $\mathfrak{B}_3^0$ the set of equivalence classes of lattices, which constitute the $0$-skeleton of the so-called \emph{Bruhat--Tits building}. By choosing finitely many lattice classes, it is possible to construct one-parameter families of degenerations of $\mathbb{P}^2$ as follows.

\begin{definition}
Let $\Sigma=\{[L_1],\ldots,[L_m]\}$ be a finite subset of lattice classes of $\mathfrak{B}_3^0$. Consider the diagonal embedding
\[
\mathbb{P}_K^2\subseteq\mathbb{P}(L_1)\times_R\ldots\times_R\mathbb{P}(L_m),
\]
which component-wisely is the natural open embedding $\mathbb{P}_K^2\subseteq\mathbb{P}(L_i)_K$, $i\in\{1,\ldots,m\}$. Then, the Zariski closure of $\mathbb{P}_K^2$ under the above embedding is denoted by $\mathbb{P}(\Sigma)$ and it is called the \emph{Mustafin variety} associated to $\Sigma$ (or the \emph{Mustafin join} of $[L_1],\ldots,[L_m]$). The construction of $\mathbb{P}(\Sigma)$ is independent of the choice of representatives of the lattice classes as the resulting Mustafin varieties will be isomorphic.
\end{definition}

\begin{remark}
Let us collect some of the basic properties of a Mustafin variety $\mathbb{P}(\Sigma)$ as recounted in \cite{CHSW11}. First of all, $\mathbb{P}(\Sigma)$ is integral, normal, Cohen--Macaulay, the family $\mathbb{P}(\Sigma)\rightarrow\mathrm{Spec}(R)$ is flat, and its generic fiber $\mathbb{P}(\Sigma)_K$ isomorphic to $\mathbb{P}_K^2$. The central fiber $\mathbb{P}(\Sigma)_\Bbbk$ can possibly be reducible (see the next example). At the same time, $\mathbb{P}(\Sigma)_\Bbbk$ is reduced, connected, Cohen--Macaulay, and its irreducible components are rational varieties.
\end{remark}

\begin{example}
\label{ex:Mustafin-variety-example}
Let $e_1,e_2,e_3$ be the standard basis of the $K$-vector space $K^3$. Consider the lattices
\[
L_1=e_1R+e_2R+e_3R,~L_2=te_1R+e_2R+e_3R.
\]
Let $\Sigma=\{[L_1],[L_2]\}$. Then, the central fiber $\mathbb{P}(\Sigma)_\Bbbk$ is isomorphic to the transverse gluing of a copy of $\mathbb{P}^2$ and a copy of the Hirzebruch surface $\mathbb{F}_1$, the blow up of $\mathbb{P}^2$ at one point. The gluing is along a line of $\mathbb{P}^2$ and the $(-1)$-curve of $\mathbb{F}_1$. For a detailed calculation explaining this, see \cite[Remark~5.1]{ST22}. In Figure~\ref{degeneration5points1} we give a schematic visualization of such degeneration. More in general, in Remark~\ref{rmk:pictures-explained} we include the dictionary needed to read such pictures for the cases of interest in this paper.
\end{example}

\begin{figure}[hbtp]
\centering
\begin{tikzpicture}[scale=0.5]
	\draw[line width=1pt] (0,6) -- (6,6);
	\draw[line width=1pt] (6,0) -- (6,6);
	\draw[line width=1pt] (0,6) -- (6,0);
	\draw[line width=1pt] (3,6) -- (6,3);
	\fill[gray!50,nearly transparent] (0,6) -- (6,6) -- (6,0) -- (0,6) -- cycle;

	\node at (2.6,2.6) {$\mathbb{F}_1$};

	\node at (6.7,5.2) {$\mathbb{P}^2$};
\end{tikzpicture}
\caption{Central fiber of the Mustafin join $\mathbb{P}(\Sigma)$ in Example~\ref{ex:Mustafin-variety-example}.}
\label{degeneration5points1}
\end{figure}

\subsection{Degenerations of \texorpdfstring{$n$}{n} points in \texorpdfstring{$\mathbb{P}^2$}{P2} from Mustafin varieties}
\label{sec:degen-P^2-n-points-via-Mustafin-var's}

We now use Mustafin varieties to construct specific degenerations of the projective plane with $n$ marked points. We will think of a one-parameter family of $n$-points in $\mathbb{P}^2$ as a collection $\mathbf{a}=(a_1,\ldots,a_n)$ where, for each $i\in\{1,\ldots,n\}$, $a_i$ is a morphism of $K$-schemes
\[
a_i\colon\mathrm{Spec}(K)\rightarrow\mathbb{P}_K^2.
\]
Using standard notation, we will simply write $\mathbf{a}\in(\mathbb{P}^2)^n(K)$. Given a one-parameter family $\mathbf{a}$ and a lattice class $[L]\in\mathfrak{B}_3^0$, we can view each $a_i$ also as a $K$-point of $\mathbb{P}(L)$ via the composition $\mathrm{Spec}(K)\xrightarrow{a_i}\mathbb{P}_K^2\subseteq\mathbb{P}(L)$. By the valuative criterion of properness, there exists a unique extension of $a_i$ as an $R$-point of $\mathbb{P}(L)$, which we denote by $\overline{a}_i^L$. Let $\overline{a}_i^L(0)$ be the image of the closed point of $\mathrm{Spec}(R)$ via $\overline{a}_i^L$ (note the slight abuse of notation here, as the maximal ideal of $R$ is generated by $t$, and it is not the zero ideal). So, the points $\overline{a}_1^L(0),\ldots,\overline{a}_n^L(0)\in\mathbb{P}(L)_\Bbbk$ are the limits of $a_1,\ldots,a_n$ in the central fiber of the family $\mathbb{P}(L)\rightarrow\mathrm{Spec}(R)$.

\begin{definition}
Let $\mathbf{a}=(a_1,\ldots,a_n)\in(\mathbb{P}^2)^n(K)$ be a one-parameter family of $n$ linearly general $K$-points of $\mathbb{P}^2$. Let $[L]\in\mathfrak{B}_3^0$ be arbitrary. We say that $[L]$ is \emph{stable} with respect to $\mathbf{a}$ provided there exist four points among $\overline{a}_1^L(0),\ldots,\overline{a}_n^L(0)$ which are in general linear position. If the four points are $\overline{a}_{i_1}^L(0),\ldots,\overline{a}_{i_4}^L(0)$, then we say that $[L]$ stabilizes the quadruple $(i_1,\ldots,i_4)$. We denote by $\Sigma_\mathbf{a}$ the set of stable lattice classes with respect to $\mathbf{a}$.
\end{definition}

As $\Sigma_\mathbf{a}$ is a finite set \cite{KT06}, we can consider the Mustafin variety $\mathbb{P}(\Sigma_\mathbf{a})$.

\begin{definition}
\label{def:n-pointed-Mustafin-degeneration}
Let $\mathbf{a}=(a_1,\ldots,a_n)\in(\mathbb{P}^2)^n(K)$ and consider the corresponding set $\Sigma_\mathbf{a}$ of stable lattice classes. Then, the $K$-points $a_1,\ldots,a_n\in\mathbb{P}(\Sigma_\mathbf{a})(K)$ uniquely extend to $R$-points of the Mustafin variety $\overline{a}_i\in\mathbb{P}(\Sigma_\mathbf{a})(R)$. Hence, the central fiber $\mathbb{P}(\Sigma_\mathbf{a})_\Bbbk$ comes equipped with the marked points $\overline{a}_1(0),\ldots,\overline{a}_n(0)$. By \cite[Lemma~6.1 and Theorem~6.9]{ST22}, we have that these are distinct smooth points of $\mathbb{P}(\Sigma_\mathbf{a})_\Bbbk$. We call $(\mathbb{P}(\Sigma_\mathbf{a})_\Bbbk;\overline{a}_1(0),\ldots,\overline{a}_n(0))$ an \emph{$n$-pointed Mustafin degeneration of $\mathbb{P}^2$}.
\end{definition}

\begin{example}
\label{ex:example-Mustafin-limit-5-points}
Let $\mathbf{a}=(a_1,\ldots,a_5) = ([1:0:0],[0:1:0],[0:0:1],[1:1:1],[t^2:1:t])$. Then, the stable lattice classes are given by
\[
\Sigma_\mathbf{a}=\{[e_1R+ e_2R+ e_3R], [t^2e_1R+ e_2R+ te_3R], [te_1R+ e_2R+ te_3R]\}.
\]
For instance, changing basis from $\{e_1,e_2,e_3\}$ to $\{te_1,e_2,te_3\}$ and after rescaling the projective coordinates, the arc becomes
\[
([1:0:0],[0:1:0],[0:0:1],[1:t:1],[t:1:1]).
\]
By setting $t=0$, we can see that in the limit has the points $1,2,4,5$ in general linear position. The other quadruples are linearly general in the limit after changing basis with respect to the other lattices. The central fiber $\mathbb{P}(\Sigma_\mathbf{a})_\Bbbk$ is pictured in Figure~\ref{fig:example-Mustafin-limit-5-points}. More details can be found in \cite[Example~6.2]{ST22}.
\end{example}

\begin{figure}[hbtp]
\centering
\begin{tikzpicture}[scale=0.5]
	\draw[line width=1pt] (0,6) -- (6,6);
	\draw[line width=1pt] (6,0) -- (6,6);
	\draw[line width=1pt] (0,6) -- (6,0);
	\draw[line width=1pt] (2,4) -- (6,4);
	\draw[line width=1pt] (4,6) -- (4,2);
    \fill[gray!50,nearly transparent] (0,6) -- (4,6) -- (4,4) -- (2,4) -- cycle;
    \fill[gray!50,nearly transparent] (6,0) -- (6,4) -- (4,4) -- (4,2) -- cycle;
    \fill[gray!50,nearly transparent] (2,4) -- (4,4) -- (4,2) -- cycle;

	\node at (2.5,6.7) {$\mathbb{F}_1$};

	\node at (7,2.5) {$\mathbb{F}_1$};

	\node at (8,5) {$\mathbb{P}^1\times\mathbb{P}^1$};
	
	\node at (2.5,2.5) {$\mathbb{P}^2$};

	\fill (0,6) circle (5pt);
	\fill (6,0) circle (5pt);
	\fill (6,6) circle (5pt);
	\fill (2,5) circle (5pt);
	\fill (5,2) circle (5pt);

	\node at (-0.5,6) {$1$};
	\node at (6.5,0) {$2$};
	\node at (6.5,6) {$3$};
	\node at (1.5,5.5) {$4$};
	\node at (5.5,1.5) {$5$};
\end{tikzpicture}
\caption{The $5$-pointed Mustafin degeneration of $\mathbb{P}^2$ in Example~\ref{ex:example-Mustafin-limit-5-points}.}
\label{fig:example-Mustafin-limit-5-points}
\end{figure}

The idea originally pursued by Gerritzen--Piwek in \cite{GP91} was to construct a compact moduli space parametrizing all possible $n$-pointed Mustafin degenerations $(\mathbb{P}(\Sigma_\mathbf{a});\overline{a}_1(0),\ldots,\overline{a}_n(0))$. This was carried out in \cite{ST22}. Next, we introduce this compact moduli space and review some of its main properties. In Section~\ref{sec:construction-of-the-global-family-of-degenerate-conics}, we recall the details of the construction, as they will be needed there.

\subsection{Moduli of points and lines in \texorpdfstring{$\mathbb{P}^2$}{P2}}
\label{sec:mod-pf-pts-and-lines-in-P2}

Let $n\geq5$ be an integer. Let $\mathbf{U}(3,n)\subseteq(\mathbb{P}^2)^n$ be the open subset corresponding to $n$-tuples of points in general linear position. Then, $\mathrm{SL}_3$ naturally acts on $\mathbf{U}(3,n)$ and the quotient
\[
\mathbf{B}(3,n):=\mathbf{U}(3,n)/\mathrm{SL}_3
\]
is the moduli space of isomorphism classes of $n$-tuples of linearly general points in $\mathbb{P}^2$.

By viewing points in $\mathbb{P}^2$ as lines in the dual projective plane $(\mathbb{P}^2)^\vee$, we have the following alternative construction of $\mathbf{B}(3,n)$. Consider the Grassmannian $\mathrm{Gr}(3,n)$ of $3$-dimensional vector subspaces $V$ in $\mathbb{C}^n$. In particular, given a point $[V]\in\mathrm{Gr}(3,n)$, we have a projective plane $\mathbb{P}(V)\subseteq\mathbb{P}(\mathbb{C}^n)=\mathbb{P}^{n-1}$, and the coordinate hyperplanes $H_i:=V(X_{i-1})\subseteq\mathbb{P}^{n-1}$ for $i=1,\ldots,n$, restrict to $\mathbb{P}(V)$ giving an arrangement of $n$ lines: $\sum_{i=1}^nH_i|_{\mathbb{P}(V)}$. Let $\mathrm{Gr}(3,n)^\circ\subseteq\mathrm{Gr}(3,n)$ be the open subset corresponding to points $[V]$ such that the line arrangement $H_1|_{\mathbb{P}(V)},\ldots,H_n|_{\mathbb{P}(V)}$ is linearly general. The torus $\mathbb{G}_m^n$ naturally acts on $\mathrm{Gr}(3,n)$ with the small diagonal $\mathrm{diag}(\mathbb{G}_m)\subseteq\mathbb{G}_m^n$ as stabilizer. Then, the quotient
\[
\mathbf{X}(3,n):=\mathrm{Gr}(3,n)^\circ/(\mathbb{G}_m^n/\mathrm{diag}(\mathbb{G}_m))=\mathrm{Gr}(3,n)^\circ/\mathbb{G}_m^{n-1}
\]
parametrizes isomorphism classes of $n$ lines in $(\mathbb{P}^2)^\vee$. Via the \emph{Gelfand--MacPherson correspondence} \cite{GM82}, we have an isomorphism
\[
\mathbf{B}(3,n)=\mathbf{U}(3,n)/\mathrm{SL}_3\cong\mathrm{Gr}(3,n)^\circ/\mathbb{G}_m^{n-1}=\mathbf{X}(3,n).
\]
This dual perspective will be useful in Section~\ref{sec:selfdualpoints} when we consider the sublocus of $\mathbf{B}(3,6)$ consisting of self-dual point configurations. From now on, we will identify $\mathbf{B}(3,n)$ and $\mathbf{X}(3,n)$ via the Gelfand--MacPherson correspondence, and always think of $\mathbf{X}(3,n)$ as a moduli space of points in $\mathbb{P}^2$ via this identification.

\begin{remark}
The $\mathrm{SL}_3$-orbit of a one-parameter family $\mathbf{a}=(a_1,\ldots,a_n) $ of $n$ linearly general points in $\mathbb{P}_K^2$ (i.e. $\mathbf{a}\in(\mathbb{P}^2)^n(K)$) is equivalent to considering a $K$-point of the moduli space $\mathbf{B}(3,n)$:
\[
\mathbf{a}\colon\mathrm{Spec}(K)\rightarrow\mathbf{B}(3,n),
\]
which we call an \emph{arc}. In the sequel, we will often use these two perspectives interchangeably.
\end{remark}

\subsection{Compact moduli of points in \texorpdfstring{$\mathbb{P}^2$}{P2}}

The compact moduli space parametrizing $n$-pointed Mustafin degenerations of $\mathbb{P}^2$ was constructed in \cite{ST22} building upon \cite{GP91} and the theory of multigraded Hilbert schemes \cite{HS04}. The next theorem summarizes its main properties.

\begin{theorem}[{\cite[Section~7]{ST22}}]
\label{thm:main-on-XbarGP(3,n)}
There exists a projective compactification $\mathbf{B}(3,n)\subseteq\overline{\mathbf{X}}_{\mathrm{GP}}(3,n)$ together with a proper flat family $\overline{\mathcal{M}}(3,n)\rightarrow\overline{\mathbf{X}}_{\mathrm{GP}}(3,n)$ with $n$ disjoint sections $s_1,\ldots,s_n$ satisfying the following property.
Let $x\in\overline{\mathbf{X}}_{\mathrm{GP}}(3,n)$ be a closed point. Consider an arc $\mathbf{a}\colon\mathrm{Spec}(K)\rightarrow\mathbf{B}(3,n)$  such that the unique extension of $\mathbf{a}$ as an $R$-point of $\overline{\mathbf{X}}_{\mathrm{GP}}(3,n)$ satisfies $x=\overline{\mathbf{a}}(0)$.
Let $\Sigma_\mathbf{a}$ be the set of stable lattices with respect to $\mathbf{a}$ and denote by $\mathbb{P}(\Sigma_\mathbf{a})$ their Mustafin join. 
 Then,
\begin{enumerate}

\item $\overline{\mathbf{a}}^*\overline{\mathcal{M}}(3,n)\cong\mathbb{P}(\Sigma_{\mathbf{a}})$, where $\overline{\mathbf{a}}^*\overline{\mathcal{M}}(3,n)$ denotes the fiber product of $\mathrm{Spec}(R)$ and $\overline{\mathcal{M}}(3,n)$ over $\overline{\mathbf{X}}_{\mathrm{GP}}(3,n)$;

\item Under the above isomorphism, the points $s_1(x),\ldots,s_n(x)$ in the central fiber of $\overline{\mathbf{a}}^*\overline{\mathcal{M}}(3,n)$ correspond to $\overline{a}_1(0),\ldots,\overline{a}_n(0)\in\mathbb{P}(\Sigma_{\mathbf{a}})_\Bbbk$, respectively.

\end{enumerate}
\end{theorem}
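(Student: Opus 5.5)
The plan is to realize $\overline{\mathbf{X}}_{\mathrm{GP}}(3,n)$ as the closure of $\mathbf{B}(3,n)$ inside a multigraded Hilbert scheme, following the Gerritzen--Piwek idea as made rigorous in \cite{ST22}. Let $Q=(i_1,\ldots,i_4)$ be a quadruple of indices and let $\mathbf{p}=(p_1,\ldots,p_n)\in\mathbf{U}(3,n)$. There is a unique projectivity $\phi_Q\colon\mathbb{P}^2\rightarrow\mathbb{P}^2$ sending $p_{i_1},\ldots,p_{i_4}$ to the standard frame $[1:0:0],[0:1:0],[0:0:1],[1:1:1]$.

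Taking all quadruples at once gives an embedding
\[
\Phi_{\mathbf{p}}=(\phi_Q)_Q\colon\mathbb{P}^2\hookrightarrow\prod_{Q}\mathbb{P}^2 .
\]
It depends only on the $\mathrm{SL}_3$-orbit of $\mathbf{p}$. Its image $Z_{\mathbf{p}}$ has a fixed multigraded Hilbert function, namely that of the diagonal. So $[\mathbf{p}]\mapsto[Z_{\mathbf{p}}]$ defines a morphism from $\mathbf{B}(3,n)$ to the multigraded Hilbert scheme $H$ of \cite{HS04}, which is projective. We also record the marked points $\Phi_{\mathbf{p}}(p_i)\in\prod_Q\mathbb{P}^2$, which gives a morphism
\[
\mathbf{B}(3,n)\rightarrow H\times\Bigl(\prod_Q\mathbb{P}^2\Bigr)^n .
\]
This morphism is injective, since $\phi_Q$ for a single $Q$ already recovers the configuration. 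We define $\overline{\mathbf{X}}_{\mathrm{GP}}(3,n)$ to be the closure of its image. We take $\overline{\mathcal{M}}(3,n)$ to be the pullback of the universal family of $H$, which is proper and flat. The sections $s_i$ are induced by the point coordinates; the incidence condition $s_i\in\overline{\mathcal{M}}(3,n)$ is closed, so it persists on the boundary.

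Next, fix an arc $\mathbf{a}$ with limit $x$. Each $\phi_Q$, computed over $K$, is a change of basis of $K^3$, and it determines a lattice class $[L_Q]$. Concretely, $L_Q$ is the lattice for which $\overline{a}_{i_1}^{L_Q}(0),\ldots,\overline{a}_{i_4}^{L_Q}(0)$ is the standard frame. The map $\Phi$ over $K$ is then exactly the diagonal embedding $\mathbb{P}^2_K\subseteq\prod_Q\mathbb{P}(L_Q)$.

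Since $\overline{\mathbf{a}}^*\overline{\mathcal{M}}(3,n)$ is flat over the DVR $R$ with generic fiber this diagonal, it equals the scheme-theoretic closure of the diagonal. By definition, that closure is the Mustafin join $\mathbb{P}(\{[L_Q]\}_Q)$. This proves (1) once we check that $\{[L_Q]\}_Q=\Sigma_{\mathbf{a}}$ and that repeated classes do not change the join. For the second point, the join of a repeated class is isomorphic to the join without the repetition, because the closure of a diagonal inside $\mathbb{P}(L)\times_R\mathbb{P}(L)$ is the diagonal of $\mathbb{P}(L)$.

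For the equality of sets, the two inclusions go as follows.
\begin{itemize}
\item By construction, $L_Q$ stabilizes $Q$, so $[L_Q]\in\Sigma_{\mathbf{a}}$.
\item Conversely, if $[L]$ stabilizes $Q$, then the limits of $a_{i_1},\ldots,a_{i_4}$ in $\mathbb{P}(L)_\Bbbk$ are in general position. An $R$-linear automorphism then carries them to the standard frame, so $[L]=[L_Q]$.
\end{itemize}

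For (2), the sections pulled back along $\overline{\mathbf{a}}$ are $R$-points of $\mathbb{P}(\Sigma_{\mathbf{a}})$ extending $a_i$. By the valuative criterion they coincide with $\overline{a}_i$, so $s_i(x)=\overline{a}_i(0)$. Disjointness of the sections follows from \cite[Lemma~6.1 and Theorem~6.9]{ST22}, which give that the $\overline{a}_i(0)$ are distinct, together with the fact that every closed point of $\overline{\mathbf{X}}_{\mathrm{GP}}(3,n)$ is the limit of some arc.

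The main obstacle I expect is that the fiber over $x$ must be independent of the chosen arc, and it must agree with the Hilbert-scheme point rather than with some nonreduced or embedded limit. Flatness over $R$ settles the arc-wise statement. Independence of the arc then follows because the fiber is the point $x\in H$ itself. The remaining care lies in the projectivity and normality subtleties of the closure, for which I would rely on the arguments of \cite[Section~7]{ST22}.
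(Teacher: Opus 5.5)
Your proposal is correct and is essentially the argument behind \cite[Section~7]{ST22} as recalled in Section~\ref{sec:construction-of-the-global-family-of-degenerate-conics}. Your point coordinates $\Phi_{\mathbf{p}}(p_i)$ are the Gerritzen--Piwek embedding into $\overline{\mathbf{B}}(3,n)$, and $\overline{\mathcal{M}}(3,n)$ is the pulled-back universal family of the (multigraded) Hilbert scheme of the diagonal. Statement (1) then follows because flatness over $R$ forces $\overline{\mathbf{a}}^*\overline{\mathcal{M}}(3,n)$ to be the closure of the diagonal in $\prod_Q\mathbb{P}(L_Q)$, and $\{[L_Q]\}_Q=\Sigma_{\mathbf{a}}$ because each quadruple is stabilized by exactly one lattice class.

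One small point: you need that $\mathbf{B}(3,n)\to H\times(\prod_Q\mathbb{P}^2)^n$ is a locally closed embedding, not just injective, so that $\mathbf{B}(3,n)$ is open in its closure. Your own remark gives this, since the image is the graph of a morphism over the open subset of $(\mathbb{P}^2)^{n-4}$ given by the coordinates of a single frame $Q$.
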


In \cite{ST22} is given a full description for $\overline{\mathbf{X}}_{\mathrm{GP}}(3,5)$ and $\overline{\mathbf{X}}_{\mathrm{GP}}(3,6)$ which we now recount.

\subsection{The compactification for \texorpdfstring{$5$}{5} points in \texorpdfstring{$\mathbb{P}^2$}{P2}}
\label{sec:Mustafin-comp-for-5-points}

For $n\geq3$, let $\overline{\mathrm{M}}_{0,n}$ be the Grothendieck--Knudsen moduli space of stable $n$-pointed rational curves. In particular, for the case $n=5$, the interiors of the moduli spaces $\overline{\mathbf{X}}_\mathrm{GP}(3,5)$ and $\overline{\mathrm{M}}_{0,5}$ are naturally identified:
\[
\overline{\mathbf{X}}_\mathrm{GP}(3,5)\supsetneq\mathbf{B}(3,5)\cong\mathbf{X}(3,5)\cong\mathbf{X}(2,5)=\mathrm{M}_{0,5}\subsetneq\overline{\mathrm{M}}_{0,5}.
\]

\begin{theorem}[{\cite[Theorem~7.9]{ST22}}]
\label{thm:XbarGP35-iso-M05bar}
The compactification $\overline{\mathbf{X}}_\mathrm{GP}(3,5)$ is isomorphic to $\overline{\mathrm{M}}_{0,5}$, and the isomorphism extends the identification of their interiors. The boundary of $\overline{\mathbf{X}}_\mathrm{GP}(3,5)$ consists of two different types of strata as described below.
\begin{itemize}

\item The $1$-dimensional boundary strata of $\overline{\mathbf{X}}_\mathrm{GP}(3,5)$ generically parametrize the $5$-pointed Mustafin degenerations shown on the left of Figure~\ref{fig:degenerations-for-5-points}. These are obtained by gluing transversely a copy of $\mathbb{P}^2$ and a copy of $\mathbb{F}_1$ along a line and the $(-1)$-curve, respectively. We call this type I. The five marked points are arranged as shown in the figure.

\item The $0$-dimensional strata of $\overline{\mathbf{X}}_\mathrm{GP}(3,5)$ parametrize the $5$-pointed Mustafin degenerations shown on the right of Figure~\ref{fig:degenerations-for-5-points}. These are obtained by gluing transversely a copy of $\mathbb{P}^2$, two copies of $\mathbb{F}_1$, and one copy of $\mathbb{P}^1\times\mathbb{P}^1$. The two $(-1)$-curves of $\mathbb{F}_1$ are glued to two distinct lines of $\mathbb{P}^2$, and two incident rulings of $\mathbb{P}^1\times\mathbb{P}^1$ are glued to one fiber of each copy of $\mathbb{F}_1$. We call this type II. The five marked points are arranged as shown in the figure.

\end{itemize}
\end{theorem}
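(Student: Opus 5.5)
The plan is to deduce both parts of Theorem~\ref{thm:XbarGP35-iso-M05bar} from tropical geometry, in the same way \cite{ST22} treats $n=6$. First, $\mathbf{X}(3,5)\cong\mathbf{X}(2,5)=\mathrm{M}_{0,5}$, and under this identification the tropicalization $\trop(\mathrm{Gr}(3,5)^\circ)$ modulo lineality is the space of phylogenetic trees on five leaves, that is, the Petersen graph fan. Its tropical compactification is $\overline{\mathrm{M}}_{0,5}$ (Kapranov, Tevelev). The fan has $10$ rays and $15$ two-dimensional cones, matching the $10$ boundary divisors and $15$ boundary points of $\overline{\mathrm{M}}_{0,5}$. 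It therefore suffices to show two things: $\overline{\mathbf{X}}_{\mathrm{GP}}(3,5)$ is normal, and its boundary strata correspond to the cones of this fan with no further refinement. The second amounts to checking that the stable-lattice data $\Sigma_\mathbf{a}$ of an arc depend only on the cone containing its tropicalization.

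Second, I would compute $\Sigma_\mathbf{a}$ for arcs. Using $\mathrm{SL}_3$, fix $a_1,\dots,a_4$ to be the standard frame and set $a_5=[x:y:z]$ with $x,y,z\in K$. The tropical data is then the valuations of $x,y,z$ and of their pairwise differences, which encode the $3\times3$ minors. Generic points of a ray correspond, dually, to configurations in which three of the points collide or become collinear in the limit, depending on the type of split. In that case there are exactly two stable lattice classes, differing as $L_1$ and $L_2$ do in Example~\ref{ex:Mustafin-variety-example}. By that example the Mustafin join is $\mathbb{P}^2\cup\mathbb{F}_1$, glued along a line and the $(-1)$-curve, which is type~I. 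Points in the interior of a two-dimensional cone correspond to two compatible splits. The computation of Example~\ref{ex:example-Mustafin-limit-5-points}, in which the three lattices form a triangle in the building, gives the central fiber $\mathbb{P}^2\cup\mathbb{F}_1\cup\mathbb{F}_1\cup\mathbb{P}^1\times\mathbb{P}^1$, which is type~II. By $S_5$-symmetry, one representative for each cone type is enough. Reading off the limits $\overline{a}_i(0)$ in these coordinates locates the marked points as in Figure~\ref{fig:degenerations-for-5-points}.

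Third, I would globalize. Since the multigraded Hilbert scheme construction of Theorem~\ref{thm:main-on-XbarGP(3,n)} records exactly the Mustafin join, the map from arcs to $\overline{\mathbf{X}}_{\mathrm{GP}}(3,5)$ factors through the Petersen fan. This produces a birational morphism $\overline{\mathrm{M}}_{0,5}\to\overline{\mathbf{X}}_{\mathrm{GP}}(3,5)$, obtained by extending over codimension-one strata and then using the fact that $\overline{\mathrm{M}}_{0,5}$ is smooth. The morphism is bijective on strata, because distinct cones give non-isomorphic pointed degenerations: the marked points distinguish which triple collided.

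The main obstacle is upgrading this bijective birational morphism to an isomorphism, which requires $\overline{\mathbf{X}}_{\mathrm{GP}}(3,5)$ to be normal; Zariski's main theorem then finishes the argument. To get normality, I would try to show that the family $\overline{\mathcal{M}}(3,5)$ pulled back to $\overline{\mathrm{M}}_{0,5}$ is a family of Mustafin joins whose Hilbert-scheme point determines the point of $\overline{\mathrm{M}}_{0,5}$. Concretely, the double curves and the marked points on them recover the stable $5$-pointed conic, which gives an inverse morphism. If that approach fails, the fallback is a local computation of the Hilbert scheme's tangent space at a type~II point, showing it is $2$-dimensional.
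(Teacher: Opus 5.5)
\textbf{Gap 1: the isomorphism itself.} The step you flag as the main obstacle is where the proposal breaks down. Everything before it gives at best a finite, bijective, birational morphism $f\colon\overline{\mathrm{M}}_{0,5}\to\overline{\mathbf{X}}_{\mathrm{GP}}(3,5)$, that is, $\overline{\mathbf{X}}_{\mathrm{GP}}(3,5)^\nu\cong\overline{\mathrm{M}}_{0,5}$. This is the $n=5$ analogue of what is known for $n=6$.

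Your remedies do not close this. Showing that the Hilbert-scheme point determines the point of $\overline{\mathrm{M}}_{0,5}$ only proves injectivity of $f$. That is a set-theoretic inverse, not a morphism: the normalization of a cuspidal curve is finite, bijective and birational. Recovering the stable conic fibre by fibre from double curves likewise does not give a flat family of stable curves over the possibly non-normal base $\overline{\mathbf{X}}_{\mathrm{GP}}(3,5)$.

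The fallback is misformulated. Near such a point $\mathbf{H}^{h_1}_S$ contains the closure of the $\mathrm{PGL}_3^5$-orbit of the diagonal, so its tangent space has dimension at least $32$. What you would actually need is the Zariski tangent space of the closure $\overline{\mathbf{X}}_{\mathrm{GP}}(3,5)\subseteq\overline{\mathbf{B}}(3,5)\times\mathbf{H}^{h_1}_S$, for which you have no equations. Even then, checking type~II points would not exclude non-normality along the one-dimensional strata.

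The missing idea is to use the Gerritzen--Piwek factor $\overline{\mathbf{B}}(3,5)$.
\begin{itemize}
\item Up to fixed automorphisms of $\mathbb{P}^2$, the maps $[p_1,\ldots,p_5]\mapsto\psi_{v_1,\ldots,v_4}(p_{v_5})$ are the five Kapranov blow-downs $\overline{\mathrm{M}}_{0,5}\to\mathbb{P}^2$. The $i$-th one contracts the four boundary curves on which $i$ lies on a two-pointed tail.
\item At every point of $\overline{\mathrm{M}}_{0,5}$, one of these five maps is a local isomorphism.
\item Two boundary curves meet in at most one point, so the product map is injective.
\item The product is therefore proper, injective and unramified, hence a closed embedding, so $\overline{\mathbf{B}}(3,5)\cong\overline{\mathrm{M}}_{0,5}$.
\item The projection $p\colon\overline{\mathbf{X}}_{\mathrm{GP}}(3,5)\to\overline{\mathbf{B}}(3,5)$ then satisfies $p\circ f=\mathrm{id}$. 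So $f$ is a closed immersion with dense image, hence an isomorphism, and no normality of $\overline{\mathbf{X}}_{\mathrm{GP}}(3,5)$ is required.
\end{itemize}

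\textbf{Gap 2: the construction of $f$.} Smoothness of $\overline{\mathrm{M}}_{0,5}$ does not remove codimension-two indeterminacy of a rational map to a projective variety; think of $\mathbb{P}^2\dashrightarrow\mathbb{P}^1$. What you need is that two arcs with the same limit in $\overline{\mathrm{M}}_{0,5}$ have the same limit in $\overline{\mathbf{X}}_{\mathrm{GP}}(3,5)$. Then the graph closure maps bijectively onto $\overline{\mathrm{M}}_{0,5}$, and Zariski's Main Theorem gives $f$.

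Your criterion ``$\Sigma_\mathbf{a}$ depends only on the cone'' does not deliver this. It is not literally true, since $\Sigma_\mathbf{a}$ is a set of lattice classes determined by the arc. In any case it controls only the combinatorial type. The point of $\overline{\mathbf{X}}_{\mathrm{GP}}(3,5)$ is the embedded subscheme $\mathbb{P}(\Sigma_\mathbf{a})_\Bbbk\subseteq(\mathbb{P}^2)^5$ in fixed frame coordinates. A type~I surface carries a modulus (a cross-ratio on the double curve), and indeterminacy points could lie inside boundary divisors, not only at the fifteen type~II points. So you must show that the leading coefficients of the arc determine this subscheme.

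The construction recalled in Section~\ref{sec:Mustafin-comp-for-5-points} avoids this arc-by-arc analysis. There the universal conic is embedded by $-K_\pi$ in the $\mathbb{P}^2$-bundle $\overline{\mathcal{P}}'''$ and birationally modified to an explicit flat family $\overline{\mathcal{P}}\subseteq\overline{\mathrm{M}}_{0,5}\times(\mathbb{P}^2)^5$ of Mustafin degenerations. That family gives the morphism to the Hilbert scheme directly.

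\textbf{Smaller slips.}
\begin{itemize}
\item The lattices of Example~\ref{ex:example-Mustafin-limit-5-points} do not form a triangle in the building. The classes $[e_1R+e_2R+e_3R]$ and $[t^2e_1R+e_2R+te_3R]$ have elementary divisors $1,t,t^2$, so they are not adjacent; $[te_1R+e_2R+te_3R]$ lies between them.
\item A ray corresponds to two points colliding, or equivalently the other three becoming collinear. It does not correspond to three points colliding, which for generic directions is zero modulo lineality.
\end{itemize}
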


\begin{figure}[hbtp]
\centering
\begin{tabular}{>{\centering\arraybackslash}m{6cm}>{\centering\arraybackslash}m{6cm}}
\begin{tikzpicture}[scale=0.5]

	\draw[line width=1pt] (0,6) -- (6,6);
	\draw[line width=1pt] (6,0) -- (6,6);
	\draw[line width=1pt] (0,6) -- (6,0);
	\draw[line width=1pt] (3,6) -- (6,3);
	\fill (0,6) circle (5pt);
	\fill (6,0) circle (5pt);
	\fill (6,6) circle (5pt);
	\fill (4.5,3) circle (5pt);
	\fill (4.8,5.2) circle (5pt);
	\fill[gray!50,nearly transparent] (0,6) -- (6,6) -- (6,0) -- (0,6) -- cycle;

	\node at (6,6.7) {$i$};
	\node at (5.3,5.2) {$j$};
	\node at (6.7,0) {$k$};
    \node at (0,6.7) {$\ell$};
	\node at (5.1,3.2) {$m$};

\end{tikzpicture}
&
\begin{tikzpicture}[scale=0.5]

	\draw[line width=1pt] (0,6) -- (6,6);
	\draw[line width=1pt] (6,0) -- (6,6);
	\draw[line width=1pt] (0,6) -- (6,0);
	\draw[line width=1pt] (4,6) -- (4,2);
	\draw[line width=1pt] (6,4) -- (2,4);
	\fill (0,6) circle (5pt);
	\fill (2.5,5) circle (5pt);
	\fill (6,0) circle (5pt);
	\fill (5,2.5) circle (5pt);
	\fill (6,6) circle (5pt);
    \fill[gray!50,nearly transparent] (0,6) -- (4,6) -- (4,4) -- (2,4) -- cycle;
    \fill[gray!50,nearly transparent] (6,0) -- (6,4) -- (4,4) -- (4,2) -- cycle;
    \fill[gray!50,nearly transparent] (2,4) -- (4,4) -- (4,2) -- cycle;

	\node at (0,6.7) {$i$};
	\node at (3,5) {$j$};
	\node at (6.7,0) {$k$};
	\node at (5.1,3.2) {$\ell$};
	\node at (6.7,6.7) {$m$};

\end{tikzpicture}\\
type I & type II
\end{tabular}
\caption{$5$-pointed Mustafin degenerations parametrized by $\overline{\mathbf{X}}_{\mathrm{GP}}(3,5)$.}
\label{fig:degenerations-for-5-points}
\end{figure}

The proof of \cite[Theorem~7.9]{ST22} implicitly gives a flat family $\overline{\mathcal{D}}\rightarrow \overline{\mathbf{X}}_{\mathrm{GP}}(3,5)$ of $5$-pointed curves whose fibers over the open part $\mathbf{B}(3,5)$ correspond to the isomorphism classes of smooth plane conics passing through five marked points. The family is obtained as follows. First, we have a smooth conic bundle $\pi\colon\overline{\mathcal{C}}\rightarrow\overline{\mathrm{M}}_{0,5}$ which embeds via $-K_\pi$ into a $\mathbb{P}^2$ bundle thereby denoted $\overline{\mathcal{P}}'''$. Then, $\overline{\mathcal{P}}'''\rightarrow\overline{\mathrm{M}}_{0,5}$ is birationally modified to $\overline{\mathcal{P}}\rightarrow\overline{\mathrm{M}}_{0,5}$, which yields the family of Mustafin degenerations of $\mathbb{P}^2$ parametrized by $\overline{\mathbf{X}}_{\mathrm{GP}}(3,5)$ (see \cite[Figures~7 and 8]{ST22}). By tracing $\overline{\mathcal{C}}$ throughout the birational modification $\overline{\mathcal{P}}'''\dashrightarrow\overline{\mathcal{P}}$, we obtain the claimed family of $5$-pointed curves, which we denote by $\overline{\mathcal{D}}$. This set-up is summarized below:
\begin{center}
\begin{tikzpicture}[>=angle 90]
\matrix(a)[matrix of math nodes,
row sep=2em, column sep=2em,
text height=1.5ex, text depth=0.25ex]
{\overline{\mathcal{C}}&\overline{\mathcal{P}}'''&\overline{\mathcal{P}}&\overline{\mathcal{D}}\\
\overline{\mathrm{M}}_{0,5}&&\overline{\mathbf{X}}_{\mathrm{GP}}(3,5).&\\};
\path[right hook->] (a-1-1) edge node[]{}(a-1-2);
\path[->] (a-1-1) edge node[]{}(a-2-1);
\path[->] (a-1-2) edge node[]{}(a-2-1);
\path[dashed, ->] (a-1-2) edge node[]{}(a-1-3);
\path[->] (a-1-3) edge node[]{}(a-2-3);
\path[->] (a-2-1) edge node[above]{$\cong$}(a-2-3);
\path[left hook->] (a-1-4) edge node[]{}(a-1-3);
\path[->] (a-1-4) edge node[]{}(a-2-3);
\end{tikzpicture}
\end{center}
The fibers of $\overline{\mathcal{D}}\rightarrow\overline{\mathbf{X}}_{\mathrm{GP}}(3,5)$ over the $1$ and $0$-dimensional boundary strata are described in the next Propositions~\ref{prop:degenerate-5-pointed-conics-1-strata} and \ref{prop:degenerate-5-pointed-conics-0-strata} (again, this is just a summary of the construction in the proof of \cite[Theorem~7.9]{ST22}, so there is nothing to prove). As we will see, these fibers are reduced and $1$-dimensional. In particular, it follows that $\overline{\mathcal{D}}\rightarrow\overline{\mathbf{X}}_{\mathrm{GP}}(3,5)$ is flat by \cite[Theorem~10.12]{HKT09}.

\begin{proposition}
\label{prop:degenerate-5-pointed-conics-1-strata}
Let $x\in\overline{\mathbf{X}}_{\mathrm{GP}}(3,5)$ be a point parametrizing a $5$-pointed Mustafin degeneration of type I. Hence, it is given by $T\cup F$ where $T\cong\mathbb{P}^2$ and $F\cong\mathbb{F}_1$ (see Figure~\ref{fig:degenerations-for-5-points}). Then, the curve $\overline{\mathcal{D}}_x$ is described as follows:
\begin{enumerate}

\item On $T$, $\overline{\mathcal{D}}|_T$ is the unique line passing through $p_i$ and $p_j$;

\item On $F$, let us consider the blow up morphism $F\rightarrow\mathbb{P}^2$. Then, $\overline{\mathcal{D}}|_F$ is the strict transform of a conic passing through $p_k,p_\ell,p_m$ and the blown up point.

\end{enumerate}
\end{proposition}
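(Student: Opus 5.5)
The plan is to reduce to a one-parameter computation. By Theorem~\ref{thm:main-on-XbarGP(3,n)}, it is enough to choose an arc $\mathbf{a}$ with $\overline{\mathbf{a}}(0)=x$ and read off $\overline{\mathcal{D}}_x$ as the flat limit, inside $\mathbb{P}(\Sigma_\mathbf{a})_\Bbbk$, of the conics $C_t$ through $a_1(t),\ldots,a_5(t)$. This works because $\overline{\mathcal{D}}$ is the closure of the generic conic family and its fibers are reduced curves. Up to relabeling we take
\[
\mathbf{a}=([1:0:0],[0:1:0],[0:0:1],[1:1:1],[1:\alpha t:\beta t]),
\]
with generic $\alpha,\beta\in\mathbb{C}^*$. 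Here the points $p_i,p_j$ on $T$ are the limits of $a_1$ and $a_5$, which collide in $\mathbb{P}(L_1)$. The stable lattices are then $L_1=\langle e_1,e_2,e_3\rangle$ and $L_2=\langle e_1,te_2,te_3\rangle$.

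For $L_2$, rescaling coordinates to $y_1=x_1$, $y_2=x_2/t$, $y_3=x_3/t$ gives the limits $[1:0:0],[0:1:0],[0:0:1],[0:1:1],[1:\alpha:\beta]$. Four of these are in general position, so $L_2$ stabilizes them, as expected. The central fiber is $\mathbb{P}(L_1)_\Bbbk\cup\mathbb{P}(L_2)_\Bbbk$, suitably blown up. One component is the $\mathbb{F}_1$, obtained by blowing up $\mathbb{P}(L_1)_\Bbbk$ at $[1:0:0]$. The other is the $\mathbb{P}^2$, namely $\mathbb{P}(L_2)_\Bbbk$, glued along the line $y_1=0$ to the exceptional curve. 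So $T$ carries $a_1$ and $a_5$, and $F$ carries $a_2,a_3,a_4$.

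Next compute the conic. A conic through the first four points is $\lambda x_1x_2+\mu x_1x_3+\nu x_2x_3=0$ with $\lambda+\mu+\nu=0$. Imposing $a_5$ gives $\lambda\alpha+\mu\beta+\nu\alpha\beta t=0$, so the limit conic in $\mathbb{P}(L_1)$ is
\[
\beta x_1x_2-\alpha x_1x_3+(\alpha-\beta)x_2x_3=0.
\]
This is a smooth conic through $p_2,p_3,p_4$ and through the blown-up point $[1:0:0]$. Its tangent direction at that point is $[\alpha:\beta]$, which is exactly the direction along which $a_5$ approaches.

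In the $L_2$ coordinates the equation becomes
\[
t\,(\lambda y_1y_2+\mu y_1y_3)+t^2\nu y_2y_3=0.
\]
Dividing by $t$ and setting $t=0$ gives $y_1(\lambda y_2+\mu y_3)=0$ with $[\lambda:\mu]=[\beta:-\alpha]$. Its component other than the double line is the line $\beta y_2-\alpha y_3=0$, which passes through $[0:1:0]$ and $[1:\alpha:\beta]$, i.e.\ through $p_i$ and $p_j$. The component $y_1=0$ is the gluing line. It lies in the closure only as the trace of the $\mathbb{F}_1$ component, so it is not part of $\overline{\mathcal{D}}|_T$.

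The main obstacle is making this rigorous inside the Mustafin join rather than in each $\mathbb{P}(L_i)$ separately. One has to check that the closure of $C_K$ in $\mathbb{P}(\Sigma_\mathbf{a})$ has central fiber exactly equal to the strict transform of the conic on $F$ together with the line on $T$, reduced and with no embedded components. To do this, I would compute the closure in the product $\mathbb{P}(L_1)\times\mathbb{P}(L_2)$ using the bihomogeneous equations. The two pieces meet at the single point of the double curve given by the tangent direction $[\alpha:\beta]$, where the line meets $y_1=0$ at $[0:\alpha:\beta]$. The resulting curve is a connected nodal curve of arithmetic genus $0$, so its Hilbert polynomial matches that of the generic conic. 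Therefore no further components can appear, which identifies $\overline{\mathcal{D}}_x$ as claimed.
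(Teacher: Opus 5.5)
\textbf{Overall.} Your one-parameter computation is correct in substance. It is also a genuinely different route from the paper's, which proves nothing here: it reads the fibers off the construction in \cite[Theorem~7.9]{ST22}. There, $\overline{\mathcal{D}}$ is obtained by tracing the conic bundle $\overline{\mathcal{C}}\to\overline{\mathrm{M}}_{0,5}$, whose fiber over a type I point is already a pair of lines, through the birational modification $\overline{\mathcal{P}}'''\dashrightarrow\overline{\mathcal{P}}$. The proposal has a genuine gap in its first step, described next.

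\textbf{The gap: passing from one arc to the fiber $\overline{\mathcal{D}}_x$.} You justify the reduction by saying that $\overline{\mathcal{D}}$ is the closure of the generic conic family ``and its fibers are reduced curves''. This is circular and also insufficient.
\begin{itemize}
\item The base $\overline{\mathbf{X}}_{\mathrm{GP}}(3,5)\cong\overline{\mathrm{M}}_{0,5}$ is two-dimensional. The fiber over $x$ of a closure is, set-theoretically, the union of the limits along \emph{all} arcs ending at $x$, and it can carry non-reduced structure.
\item The identification of $\overline{\mathbf{a}}^*\overline{\mathcal{D}}$ with the closure of the conic along your arc holds only once $\overline{\mathcal{D}}$ is known to be flat.
\item In the paper, flatness of $\overline{\mathcal{D}}$ is deduced \emph{from} this proposition, via reduced one-dimensional fibers and \cite{HKT09}. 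The paper's logic runs from global fiber to arc limits; that is how the proposition is used in Theorem~\ref{thm:1parameterfamilyconics}. You need the converse.
\end{itemize}

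\textbf{How to close it.} Either take flatness from an independent source, such as the explicit construction in \cite{ST22}, or argue in two steps:
\begin{enumerate}
\item Show that every arc ending at $x$ has the same limit. After a $\mathrm{PGL}_3(K)$ normalization, any such arc has $a_1,\dots,a_4$ equal to the standard frame and $a_5=[1:f:g]$ with $\mathrm{val}(f)=\mathrm{val}(g)=k\geq 1$ and $f/g\to\alpha/\beta$. Unequal valuations would give a type II point. Your computation then goes through verbatim with $L_2=\langle e_1,t^ke_2,t^ke_3\rangle$.
\item Transfer this to a flat model such as $\overline{\mathcal{D}}^{\mathrm{c}}(3,5)\to\overline{\mathbf{X}}^{\mathrm{c}}_{\mathrm{GP}}(3,5)$, using a bijectivity argument plus Zariski's Main Theorem as in Proposition~\ref{prop:barSGPc(3,6)-and-barSGP(3,6)-have-iso-normalizations}. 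A flat family over an integral base is the closure of its generic part, so it coincides with $\overline{\mathcal{D}}$.
\end{enumerate}

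\textbf{Smaller slips in the computation.}
\begin{itemize}
\item The line $\beta y_2-\alpha y_3=0$ passes through $[1:0:0]$, the limit of $a_1$ in $T$, and through $[1:\alpha:\beta]$. It does not pass through $[0:1:0]$.
\item The factor $y_1$ is simple: $y_1=0$ is the double curve $T\cap F$, not a double line.
\item Its exclusion should be made explicit by the degree count. With respect to $\mathcal{O}(1,1)$ on $\mathbb{P}(L_1)\times\mathbb{P}(L_2)$, the flat limit has degree $4$. The strict transform of the conic contributes $2+1$ and the line contributes $0+1$; both are forced into the support by the two projections. So $T\cap F$, of degree $1$, cannot occur.
\item Equality of the Hilbert polynomials, both $4n+1$, then rules out embedded structure. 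Note that matching arithmetic genus alone does not suffice; the degrees must also agree.
\item Allow all $\alpha,\beta$ with $\alpha\beta(\alpha-\beta)\neq0$, not only generic ones, so that every type I point is reached.
\end{itemize}
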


\begin{proposition}
\label{prop:degenerate-5-pointed-conics-0-strata}
Let $x\in\overline{\mathbf{X}}_{\mathrm{GP}}(3,5)$ be a point parametrizing a $5$-pointed Mustafin degeneration of type II. Hence, it is given by $F_1\cup F_2\cup T\cup Z$ where $F_1,F_2\cong\mathbb{F}_1$, $T\cong\mathbb{P}^2$ and $Q\cong \PP^1\times\PP^1$ (see Figure~\ref{fig:degenerations-for-5-points}). Then, the curve $\overline{\mathcal{D}}_x$ is described as follows:
\begin{enumerate}

\item  On $F_h$, for $h = 1,2$, consider the blow up morphism $F_h\rightarrow\mathbb{P}^2$. Then $\overline{\mathcal{D}}|_{F_h}$ is the strict transform of the unique line passing through $i,j$, which does not pass through the blown up point;

\item On $T$, $\overline{\mathcal{D}}|_T$ is empty;

\item On $Q$, $\overline{\mathcal{D}}|_Q$ is the unique $(1,1)$-curve passing through $p_m$ and the two points given by the intersections of $\overline{\mathcal{D}}|_{F_1}$ and $\overline{\mathcal{D}}|_{F_2}$ with $Q$.

\end{enumerate}
\end{proposition}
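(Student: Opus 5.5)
Since the paper says this is a summary of the construction in \cite[Theorem~7.9]{ST22}, I will verify it by a one-parameter computation. Take an arc $\mathbf{a}$ whose limit $x=\overline{\mathbf{a}}(0)$ is a type II point, and describe $\overline{\mathcal{D}}_x$ as the flat limit in $\mathbb{P}(\Sigma_\mathbf{a})$ of the conics $C_K$ through $a_1,\ldots,a_5$. Flatness of $\overline{\mathcal{D}}$ makes this limit independent of the chosen arc. I would use the explicit arc of Example~\ref{ex:example-Mustafin-limit-5-points}, namely $([1:0:0],[0:1:0],[0:0:1],[1:1:1],[t^2:1:t])$. Its three stable lattices $L_1=\langle e_1,e_2,e_3\rangle$, $L_2=\langle t^2e_1,e_2,te_3\rangle$, $L_3=\langle te_1,e_2,te_3\rangle$ give the central fiber in Figure~\ref{fig:example-Mustafin-limit-5-points}. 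Any type II point is obtained from this one by relabeling and the $\mathrm{SL}_3$-action.

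\textbf{Step 1: the conic over $K$.} Conics through the coordinate points have the form $\alpha yz+\beta xz+\gamma xy=0$. Imposing $[1:1:1]$ and $[t^2:1:t]$ gives, up to scaling, $C_K\colon (t^2-t)\,yz+(1-t^2)\,xz+(t-1)\,xy=0$. Dividing by $t-1$, this becomes $-t\,yz+(1+t)\,xz+xy=0$.

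\textbf{Step 2: limits in each $\mathbb{P}(L_i)$.} For each lattice, substitute the rescaled coordinates, clear the appropriate power of $t$, and set $t=0$. This gives the limit curve $D_i\subset\mathbb{P}(L_i)_\Bbbk$.
\begin{itemize}
\item In $L_1$ the limit is $x(y+z)=0$, a line pair.
\item In $L_3$, with $x=tx'$ and $z=tz'$, one gets $t^2(x'z')+tx'y+\cdots$. After dividing by $t$, the leading term is $x'y-t y z'$, which is degenerate again.
\end{itemize}
Then I would identify which component of the central fiber each lattice contributes. Each component of $\mathbb{P}(\Sigma_\mathbf{a})_\Bbbk$ maps birationally onto $\mathbb{P}(L_i)_\Bbbk$ for the appropriate $i$, and the restriction of the limit curve to that component is the strict transform of the component of $D_i$ not supported on the exceptional loci. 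Matching these with the components $F_1,F_2,T,Q$ should show the following.
\begin{itemize}
\item On each $\mathbb{F}_1$ the limit is the line through the two marked points there, and it misses the blown-up point.
\item The curve on the central $\mathbb{P}^2$ component $T$ collapses, so $\overline{\mathcal{D}}|_T=\emptyset$.
\item On $Q\cong\mathbb{P}^1\times\mathbb{P}^1$, which carries $p_m$, the limit is a $(1,1)$-curve.
\end{itemize}

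\textbf{Step 3: uniqueness on $Q$.} The restriction to $Q$ must pass through $p_m$ and through the points where the lines on $F_1$ and $F_2$ meet the double curves $F_h\cap Q$; this is required for connectedness. These three points are in general position on $Q$, and $(1,1)$-curves form a $\mathbb{P}^3$, so the curve through them is unique. Reducedness of the total curve and arithmetic genus $0$ then agree with the total degree count.

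The main obstacle is Step 2. One has to correctly identify the components of the Mustafin join with the lattices and their exceptional loci, especially showing that $T$ contains no component of the limit. To do this I would compute the join explicitly as the closure in $\prod\mathbb{P}(L_i)$, as in \cite[Remark~5.1]{ST22}, and check bidegrees on $Q$ via the projections to $\mathbb{P}(L_2)$ and $\mathbb{P}(L_3)$.
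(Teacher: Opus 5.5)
The genuine gap is Step~2: items (2) and (3), which are the whole content of the statement, are asserted (``should show'') rather than derived, and the mechanism you describe cannot derive them. It is not true that every component of $\mathbb{P}(\Sigma_\mathbf{a})_\Bbbk$ maps birationally onto some $\mathbb{P}(L_i)_\Bbbk$, because $Q$ is a secondary component. For your arc the lattices match the components as follows:
$L_1$, where $p_2,p_5$ collide and which stabilizes $1234,1345$, belongs to the $\mathbb{F}_1$ containing $p_1,p_4$;
$L_2$, where $p_1,p_4$ collide ($1235,2345$), belongs to the $\mathbb{F}_1$ containing $p_2,p_5$;
$L_3$ ($1245$) belongs to $T$.

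Hence $Q$ maps onto the line $\{x=0\}\subseteq\mathbb{P}(L_1)_\Bbbk$, onto the line $\{y'=0\}\subseteq\mathbb{P}(L_2)_\Bbbk$ (coordinates $x=t^2x'$, $z=tz'$), and to a point of $\mathbb{P}(L_3)_\Bbbk$. In particular, your proposed projections to $\mathbb{P}(L_2)$ and $\mathbb{P}(L_3)$ cannot see a bidegree on $Q$.

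The limits in the separate $\mathbb{P}(L_i)_\Bbbk$ only record the images of $\overline{\mathcal{D}}_x$, and those images do not change in either of the following cases:
you add the double curves $T\cap F_h$, which go to points in $\mathbb{P}(L_1)_\Bbbk,\mathbb{P}(L_2)_\Bbbk$ and onto the two lines that already form the limit in $\mathbb{P}(L_3)_\Bbbk$;
you replace the $(1,1)$-curve on $Q$ by $(F_1\cap Q)\cup(F_2\cap Q)\cup\ell$, with $\ell$ a ruling through $p_3$.
That replacement is connected and contains $p_3$ and both attaching points, so the incidence conditions of Step~3 do not exclude it either. Step~3 simply presupposes the class $(1,1)$.

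To close the gap you must compute the limit inside $\prod_i\mathbb{P}(L_i)$, saturating in $t$ and passing to the special fiber as in Example~\ref{ex:deg_conic}, or at least work in charts. For instance, $x=ta$, $y=b$, $z=c$ covers the interior of $Q\cong\mathbb{P}^1_{[y:z]}\times\mathbb{P}^1_{[x':z']}$, and there the conic becomes $t\bigl(bc+ac-(1+t)ab\bigr)$. Its limit is the smooth $(1,1)$-curve $yz'+x'z-x'y=0$. This curve passes through $p_3=([0:1],[0:1])$, through $([1:1],[1:0])\in F_1\cap Q$ and through $([1:0],[1:1])\in F_2\cap Q$, and it misses $r=([1:0],[1:0])$. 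Then compare multidegrees: $(1,0,1)+(0,1,1)+(1,1,0)=(2,2,2)$ is already the multidegree of the generic conic. This leaves no room for components along the double curves, and so also gives $\overline{\mathcal{D}}|_T=\emptyset$.

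Step~1 is also wrong, and the error propagates. Your first expression equals $-t^2(t-1)^2(t+2)\neq0$ at $a_5=[t^2:1:t]$, and your rescaled form $-t\,yz+(1+t)\,xz+xy$ does not even vanish at $[1:1:1]$. The correct conic is $t\,yz+xz-(1+t)\,xy=0$. Its limits are $x(z-y)=0$ in $\mathbb{P}(L_1)_\Bbbk$, $y'(z'-x')=0$ in $\mathbb{P}(L_2)_\Bbbk$, and $x''y''=0$ in $\mathbb{P}(L_3)_\Bbbk$, with $x=tx''$, $z=tz''$ for $L_3$. Your $x(y+z)=0$ misses $\overline{a}_4^{L_1}(0)=[1:1:1]$, which contradicts continuity and should have flagged the slip.

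Finally, your reduction to a single arc invokes flatness of $\overline{\mathcal{D}}$. In the paper that flatness is deduced from this very proposition, via reduced $1$-dimensional fibers plus \cite[Theorem~10.12]{HKT09}. Without it, an arc only shows that its flat limit is contained in $\overline{\mathcal{D}}_x$. The paper does not prove the proposition separately. It reads the fibers off the construction of $\overline{\mathcal{D}}$ in the proof of \cite[Theorem~7.9]{ST22}, by tracing the smooth conic bundle $\overline{\mathcal{C}}\rightarrow\overline{\mathrm{M}}_{0,5}$ through the modification $\overline{\mathcal{P}}'''\dashrightarrow\overline{\mathcal{P}}$. That construction is also what you would need in order to identify $\overline{\mathcal{D}}_x$ with your arc limit.
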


\begin{figure}[hbtp]
\centering
\begin{subfigure}{.45\textwidth}\centering
\begin{tikzpicture}[scale=0.8]

\draw [line width=1.pt,lightgray] (3.56,2.67)-- (0.,0.);
\draw [line width=1.pt] (2.44,2.67)-- (6.,0.);

\draw (1.2,0.4) node {$i$};
\draw (2.1,1.0) node {$j$};
\draw (3.4,1.4) node {$k$};
\draw (4.2,0.8) node {$\ell$};
\draw (4.9,0.2) node {$m$};

\draw [fill=black] (2.,1.5) circle (3pt);
\draw [fill=black] (1.,0.75) circle (3pt);
\draw [fill=black] (4.5,1.125) circle (3pt);
\draw [fill=black] (3.75,1.6875) circle (3pt);
\draw [fill=black] (5.25,0.5625) circle (3pt);

\end{tikzpicture}
    \caption*{type I}
\end{subfigure}
\begin{subfigure}{.45\textwidth}\centering
\begin{tikzpicture}[scale=0.8]

\draw [line width=1.pt] (8.633333333333335,2.)-- (12.36666666666668,2.);
\draw [line width=1.pt,lightgray] (11.278376529309,2.582435206036501)-- (13.,0.);
\draw [line width=1.pt,lightgray] (9.72162347069099,2.5824352060364837)-- (8.,0.);

\draw (8.8,0.5) node {$i$};
\draw (9.3,1.3) node {$j$};
\draw (11.64,1.3) node {$k$};
\draw (12.18,0.5) node {$\ell$};
\draw (10.5,1.5) node {$m$};

\draw [fill=black] (10.5,2.) circle (3pt);
\draw [fill=black] (8.375,0.5625) circle (3pt);
\draw [fill=black] (12.625,0.5625) circle (3pt);
\draw [fill=black] (8.854166666666668,1.28125) circle (3pt);
\draw [fill=black] (12.145833333333336,1.28125) circle (3pt);

\end{tikzpicture}
    \caption*{type II}
\end{subfigure}
\caption{The reducible curve $\overline{\mathcal{D}}_x$ inside the $5$-pointed Mustafin degeneration of $\mathbb{P}^2$. The coloring convention for the curve is explained in Remark~\ref{rem:coloring}.}
\label{fig:degenerate-conic-for-5-points}
\end{figure}

\begin{remark}
\label{rmk:typeI-intersection-fiber-strict-transform-conic}
In Proposition~\ref{prop:degenerate-5-pointed-conics-1-strata}, the curve $\overline{\mathcal{D}}|_F$ intersects each fiber $f\subseteq F\cong\mathbb{F}^1$ of the $\mathbb{P}^1$-fibration on $F$ in exactly one point, as the intersection number $f\cdot\overline{\mathcal{D}}|_F$ equals one.
\end{remark}

\begin{remark}
\label{rem:coloring}
We use the following coloring convention for drawing the curve $\overline{\mathcal{D}}_x$ in Propositions~\ref{prop:degenerate-5-pointed-conics-1-strata} and \ref{prop:degenerate-5-pointed-conics-0-strata}. Let $C$ be an irreducible component of $\overline{\mathcal{D}}_x$ and denote by $X$ the irreducible component of the Mustafin degeneration that contains it. We color $C$ in gray if $X\cong\mathbb{P}^2$ and $C$ is a line, or if $X$ is a blow up of $\mathbb{P}^2$ and $C$ is the strict transform of a general line. Otherwise, we color $C$ in black if $C$ is a smooth $(1,1)$-curve in $X\cong\PP^1\times\PP^1$, or if $X$ is a blow up of $\mathbb{P}^2$ and $C$ is the strict transform of a smooth conic. This convention will be used throughout the paper.
\end{remark}

\subsection{The compactification for \texorpdfstring{$6$}{6} points in \texorpdfstring{$\mathbb{P}^2$}{P2}}
\label{sec:refinedfan}

The description of $\overline{\mathbf{X}}_{\mathrm{GP}}(3,6)$ is more involved. An important insight about its geometry is provided by the theory of tropical compactifications: \cite{Tev07}, \cite[Section~6.4]{MS15}. If $X$ is a closed subvariety of a torus $T$, also called a \emph{very affine variety}, and $Y_\Sigma^T$ denotes the toric variety with dense open subtorus $T$ and fan $\Sigma$, then the closure $\overline{X}\subseteq Y_\Sigma^T$ is called a \emph{tropical compactification} if $\overline{X}$ proper and the multiplication map $T\times\overline{X}\rightarrow Y_\Sigma^T$ is faithfully flat (\cite[Definition~1.1]{Tev07}).

In our context, the very affine variety of interest is
\[
\mathbf{B}(3,6)\cong\mathbf{X}(3,6)\subseteq\mathbb{G}_m^{\binom{6}{3}-1}/\mathbb{G}_m^5=:T,
\]
and one of the main results in \cite{ST22} is that $\overline{\mathbf{X}}_{\mathrm{GP}}(3,6)^\nu$ is a tropical compactification. To understand this, the starting point is the description of the fan of the tropical Grassmannian $\trop(\Gr(3,6)^\circ)\subseteq\RR^{\binom{6}{3}}/\mathbf{1}\RR$, where $\mathbf{1}$ denotes the all ones vector. To be precise, we use the secondary fan structure  of $\trop(\Gr(3,6)^\circ)$ inherited from the Dressian $\Dr(3,6)$, which has the same support.
This fan is not pointed. Indeed, each cone contains the same $6$-dimensional vector space $L$, called \emph{lineality space} of the fan. This vector space is generated by the six vectors
\[
 \sum_{I\in\binom{[6]}{3},~i\in I} e_I,
\]
where $\binom{[6]}{3}=\{I \subseteq \{1,\ldots,6\} \,:\, |I| = 3 \}$ and $e_I$ with $I\in \binom{[6]}{3}$ denotes the standard basis vector of $\RR^{\binom{6}{3}}$ corresponding to the Plücker coordinate $p_I$. As $\mathbf{X}(3,6)\subseteq\mathbb{G}_m^{\binom{6}{3}-1}/\mathbb{G}_m^5$ and the maximal torus corresponds to the lineality space $L$ of the tropical Grassmannian, we define the fan $\Sigma(3,6):=\trop(\Gr^\circ(3,6))/L$, which is pointed. The number of its cones by increasing dimension is collected in its \emph{f-vector} $(1,65,535,1350,1005)$ \cite[Example~4.4.10]{SS04}. The $65$ rays can be distinguished into three types:
\begin{itemize}
    \item $20$ rays generated by the coordinate vectors of the form $e_{i_1i_2i_3}$ (type E),
    \item $15$ rays generated by $f_{i_1i_2i_3i_4} =e_{i_1i_2i_3} +e_{i_1i_2i_4} +e_{i_1i_3i_4} +e_{i_2i_3i_4}$ (type~F), and 
    \item $30$ rays generated by
$g_{i_1i_2i_3i_4i_5i_6} =e_{i_1i_2i_3} +e_{i_1i_2i_4} +e_{i_1i_3i_4} +e_{i_2i_3i_4} +e_{i_3i_4i_5} +e_{i_3i_4i_6}$ (type~G), where  $g_{i_1i_2 i_3 i_4 i_5 i_6
} =g_{i_3 i_4 i_5 i_6 i_1 i_2} =
g_{i_5 i_6 i_1i_2 i_3 i_4}\neq g_{i_1i_2i_5i_6i_3i_4}$. We point out that in this notation the a pairing $(i_1,i_2),(i_3,i_4),(i_5,i_6)$ is implicit.
\end{itemize} 
The $1005$ maximal cones come in $7$ flavors, described by the types of rays that span them: $30$ of type EEEE, $90$ of type EEFF(a), $90$ of type EEFF(b), $180$ of type EFFG, $240$ of type EEEG, $360$ of type EEFG, and $15$ of type FFFGG. The concrete combinatorics of their generating rays is listed in \cite[Section 5]{SS04}.

Luxton \cite{Lux08} showed that Kapranov's Chow quotient compactification $\overline{\mathbf{X}}(3,6)$  (\cite{Kap93}, \cite{HKT06}) is a tropical compactification of $\mathbf{X}(3,6)$ inside the toric variety $Y_{\Sigma(3,6)}^T$. This space has $15$ singular points corresponding to the non-simplicial maximal cones of $\Sigma(3,6)$ of type FFFGG. These are cones over the bipyramids given by a triangle of three vectors of type F with two tips that are vectors of type G: see the upper-left of Table~\ref{tab:refinementTropGr}.

\begin{definition}
\label{def:the-fan-hatSigma(3,6)}
Let $\widehat{\Sigma}(3,6)$ be the minimal refinement of $\Sigma(3,6)$ induced by splitting each cone of type FFFGG into $12$ subcones as illustrated in the upper left of Table~\ref{tab:refinementTropGr}. Let $\widehat{\mathbf{X}}(3,6)$ be the tropical compactification given by the closure of $\mathbf{X}(3,6)$ in $Y_{\widehat{\Sigma}(3,6)}$.
\end{definition}

\begin{theorem}[{\cite[Section~12]{ST22}}]

The following hold:

\begin{enumerate}

\item The normalization $\overline{\mathbf{X}}_\mathrm{GP}(3,6)^\nu$ is isomorphic to $\widehat{\mathbf{X}}(3,6)$. In particular, $\overline{\mathbf{X}}_\mathrm{GP}(3,6)^\nu$ is a tropical compactification.

\item The $6$-pointed Mustafin degenerations of $\mathbb{P}^2$ parametrized by $\overline{\mathbf{X}}_{\mathrm{GP}}(3,6)$ are listed in \cite[Tables~13--16]{ST22}.

\end{enumerate}
\end{theorem}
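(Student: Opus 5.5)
This statement is quoted from \cite[Section~12]{ST22}, so the plan is to reconstruct the argument there rather than to prove something new. The strategy for part (1) is to compare two compactifications of $\mathbf{X}(3,6)$: the tropical compactification $\widehat{\mathbf{X}}(3,6)\subseteq Y_{\widehat{\Sigma}(3,6)}$, and $\overline{\mathbf{X}}_{\mathrm{GP}}(3,6)$, which is constructed as a closure inside a product of multigraded Hilbert schemes. I will build a birational morphism $\widehat{\mathbf{X}}(3,6)\rightarrow\overline{\mathbf{X}}_{\mathrm{GP}}(3,6)$ that extends the identity on $\mathbf{B}(3,6)$. I will then show it is finite. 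Since it is birational and $\widehat{\mathbf{X}}(3,6)$ is normal, finiteness identifies $\widehat{\mathbf{X}}(3,6)$ with the normalization.

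\textbf{Step 1 (normality).} Normality of $\widehat{\mathbf{X}}(3,6)$ follows from Luxton's result that $\overline{\mathbf{X}}(3,6)$ is a tropical compactification with at worst toroidal singularities. Refining the fan from $\Sigma(3,6)$ to $\widehat{\Sigma}(3,6)$ is a toric blow up, and under it the closure stays tropical: faithful flatness of the multiplication map is preserved under toric base change along a refinement.

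\textbf{Step 2 (constructing the morphism).} Stable lattices of an arc $\mathbf{a}$ are determined by the tropicalization $\trop(\mathbf{a})$, that is, by the point of $\Sigma(3,6)$ given by the valuations of the Plücker coordinates. Concretely, $[L]$ is stable exactly when the induced regular matroid subdivision of $\Delta(3,6)$ has a corresponding maximal cell. So $\Sigma_{\mathbf{a}}$, and with it the multigraded Hilbert point of $\mathbb{P}(\Sigma_{\mathbf{a}})_\Bbbk$, is locally constant on the relative interiors of the cones of a suitable fan. The main check is that $\widehat{\Sigma}(3,6)$ is such a fan, i.e.\ that it is at least as fine as the chamber structure given by ``set of stable lattices''.

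\begin{itemize}
\item On simplicial cones of $\Sigma(3,6)$ this holds already.
\item On the $15$ bipyramids FFFGG, the combinatorics of the stable lattices changes across the internal walls. This is precisely why each bipyramid is split into $12$ subcones.
\end{itemize}

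Given this, the morphism is obtained by toric-style patching. For each cone $\sigma$, I write down explicit lattices in terms of Plücker coordinates, giving $\mathbb{P}(\Sigma)$ over the affine chart $U_\sigma\cap\widehat{\mathbf{X}}(3,6)$, and check flatness there. The universal property of the Hilbert schemes then yields the map.

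\textbf{Step 3 (finiteness and part (2)).} Properness of both spaces, together with the fact that distinct boundary strata give non-isomorphic pointed degenerations, shows that the morphism is quasi-finite, hence finite. Part (2) is a stratum-by-stratum computation. Choose a representative arc for each cone type: EEEE, EEFF(a), EEFF(b), EFFG, EEEG, EEFG, and the $12$ refined pieces of FFFGG, together with their faces. For each one, compute $\Sigma_{\mathbf{a}}$ and the central fiber with its marked points, as in Example~\ref{ex:example-Mustafin-limit-5-points}.

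The main obstacle is the local analysis at the $15$ FFFGG cones in Step~2. The other cones can be handled uniformly. Here, by contrast, I would work with explicit arcs whose tropicalizations lie in each of the $12$ subcones, and check directly both that the stable lattice sets differ across the walls and that the family is flat over each chart.
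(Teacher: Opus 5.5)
Your architecture matches what the paper relies on. The paper does not reprove this statement; it only imports from \cite{ST22} a finite birational morphism $\widehat{\mathbf{X}}(3,6)\rightarrow\overline{\mathbf{X}}_{\mathrm{GP}}(3,6)$ extending the identity, which is exactly what you set out to build before invoking normality of $\widehat{\mathbf{X}}(3,6)$ and Zariski's Main Theorem. However, the two steps carrying the content are not justified.

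\textbf{Step~2: the check you single out cannot see the refinement.} As you say, stability of $[L]$ is read off from the matroid subdivision: four reduced points in general position means exactly that the reduced rank-$3$ matroid is connected, i.e.\ its polytope is a maximal cell.
\begin{itemize}
\item It follows that the stable lattices, together with the quadruples they stabilize, are combinatorially constant on the relative interior of every cone of the secondary fan $\Sigma(3,6)$. This includes each bipyramid FFFGG, which is a single secondary cone, and each FF edge. So your claim that the stable lattices change across the internal walls is false.
\item Your criterion is therefore already satisfied by $\Sigma(3,6)$ itself. Yet there is no morphism $\overline{\mathbf{X}}(3,6)\rightarrow\overline{\mathbf{X}}_{\mathrm{GP}}(3,6)$ extending the identity. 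Its composite with $\widehat{\mathbf{X}}(3,6)\rightarrow\overline{\mathbf{X}}(3,6)$ would be your finite morphism, yet it would contract the $3$-dimensional D-stratum (type \#20), which lies over a single point of $\overline{\mathbf{X}}(3,6)$.
\item What actually changes across the new walls is how the stable lattice classes sit relative to one another in the building, which governs the secondary components of $\mathbb{P}(\Sigma_{\mathbf{a}})_\Bbbk$. For example, \#12 and \#12.1 both lie over the relative interior of an FF cone and have the same four stable lattices. In \#12.1, however, an $\mathbb{F}_1$ breaks into a primary $\mathbb{P}^2$ and a new secondary $\mathbb{P}^1\times\mathbb{P}^1$.
\item These walls are not confined to the bipyramids. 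CEE(a), CEE(b) and CEG cut through the interiors of the simplicial cones EEFF(a), EEFF(b) and EFFG. They give genuine strata \#23, \#24, \#21, with \#23.1, \#24.1, \#21.1/\#21.2 on the two sides. So ``on simplicial cones this holds already'' is wrong, and your list of test arcs for part~(2) misses these strata (and \#16).
\item Local constancy on cones is also the wrong kind of statement. The limit in $\overline{\mathbf{X}}_{\mathrm{GP}}(3,6)$ must vary along positive-dimensional strata for the map to be finite. What has to be proved is that the limit of an arc depends only on its limit point in $\widehat{\mathbf{X}}(3,6)$, that is, on leading coefficients and not just on $\trop(\mathbf{a})$. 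Your chart-by-chart ``explicit lattices plus flatness'' is exactly this content, and it is left unargued.
\end{itemize}

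\textbf{Step~3 does not prove finiteness.} That distinct strata give non-isomorphic degenerations only shows that different strata are not identified. Quasi-finiteness needs that no curve \emph{inside} a stratum is contracted, and strata here have dimension up to $3$. This is where the substance lies:
\begin{itemize}
\item the D-divisor (type \#20) is $3$-dimensional over a point of $\overline{\mathbf{X}}(3,6)$;
\item each C-divisor (type \#12) has $1$-dimensional fibers over an FF surface of $\overline{\mathbf{X}}(3,6)$;
\item each \#23 curve lies over a point of $\overline{\mathbf{X}}(3,6)$.
\end{itemize}
So nothing pulled back from Kapranov's space separates their points. You must show that the pointed Mustafin degenerations of these types, and of their specializations, genuinely vary with the full number of moduli along these strata. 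Nothing in the proposal addresses this, and it is precisely what distinguishes $\widehat{\mathbf{X}}(3,6)$ from the coarser $\overline{\mathbf{X}}(3,6)$.
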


\begin{remark}
\label{rmk:pictures-explained}
We explain how to read the pictures of the $6$-pointed Mustafin degenerations in \cite[Tables~13--16]{ST22}. The different polygons correspond to irreducible toric surfaces glued transversely along boundary curves according to the following dictionary:
\begin{itemize}

\item A triangle means $\mathbb{P}^2$ and the sides are general lines;

\item A parallelogram means $\mathbb{P}^1\times\mathbb{P}^1$ and two opposite sides represent distinct fibers of the same $\mathbb{P}^1$-fibration;

\item A trapezoid means $\mathbb{F}_1$, where the two parallel edges represent the $(-1)$-curve (the short one) and the strict transform of a general line under the blow up $\mathbb{F}_1\rightarrow\mathbb{P}^1$. The remaining two sides correspond to two distinct fibers of the $\mathbb{P}^1$-fibration;

\item A pentagon means the blow up of $\mathbb{P}^2$ at two distinct points. The three $(-1)$-curves are represented by three consecutive sides (one has to determine which three of the five on a case by case basis);

\item A hexagon means the blow up of $\mathbb{P}^2$ at three linearly general points, where the six sides represent the six $(-1)$-curves.

\end{itemize}
The polygons are colored in gray or white to distinguish \emph{primary} (gray) and \emph{secondary components} (white). We review the meaning of this when needed at the beginning of Section~\ref{sec:one-parameter-Mustafin-degens-and-conics}.
\end{remark}

\section{Compact moduli of self-dual point configurations in \texorpdfstring{$\mathbb{P}^2$}{P2}}
\label{sec:selfdualpoints}

We now introduce the moduli space $\mathbf{S}(3,6)$ of configurations of six self-dual points in $\mathbb{P}^2$ and its compactification via Mustafin degenerations.

\subsection{Self-duality and the self-dual Grassmannian}

\begin{definition}
Let $r\geq2$. Let $p_1,\ldots,p_{2r}$ be a non-degenerate configuration of points in projective space $\mathbb{P}^{r-1}$. Let $A$ be the $r\times2r$ matrix whose $i$-th column is a choice of coordinates for the point $p_i$ within its projective equivalence class. Let $P\in\mathrm{Gr}(r,2r)$ be the point corresponding to the row-span of $A$. Let $[\ldots:P_I:\ldots]$, $I\in\binom{[2r]}{r}$, be the Pl\"ucker coordinates of $P$. Then, we say that the configuration $p_1,\ldots,p_{2n}$ is \emph{self-dual} provided there exist non-zero constants $\lambda_1,\ldots,\lambda_{2r}$ such that 
\[
P_I=\prod_{i\in I}\lambda_i\cdot\mathrm{sign}(I,I^c)\cdot P_{I^c},
\]
where $\mathrm{sign}(I,I^c)$ is the sign of the permutation in $S_{2r}$ that sends the sequence $(1,2,\ldots,2r)$ to the ordered sequence $(I,I^c)$. 
Such a point $P\in\Gr(r,2r)$ is called \emph{self-dual}.
This definition is independent from the choice of $A$ because for a different choice of $A$ we can rescale the constants $\lambda_1,\ldots,\lambda_{2r}$ appropriately.

Self-dual point configurations appeared in the literature already a hundred years ago as a special instance of associated points in work of Coble \cite{Cob22}. Many contributions have been made to their study, for example by Kapranov \cite{Kap93}, Dolgachev and Ortland \cite{DO88} or  Eisenbud and Popesku \cite{EP00}, among many others. A recent study of a generalization of self-duality can be found in \cite{GZ25}.

Following \cite{GHSV24}, the \emph{self-dual Grassmannian} $\SGr(r,2r)$ is defined as the Zariski closure of the set of self-dual points $P\in\Gr(r,2r)$. It is a rational variety of dimension $\binom{2r}{r}+2r-1$ \cite[Corollary 2.5]{GHSV24}. By the Gelfand--MacPherson correspondence it parametrizes self-dual point configurations in $\PP^{r-1}$. We denote by $\SGr(r,2r)^\circ$ the open subset of $\SGr(r,2r)$ where all Pl\"ucker coordinates are non-zero.
\end{definition}

\begin{definition}
We define the \emph{moduli space of configurations of $2r$ self-dual points in $\mathbb{P}^{r-1}$} as the quotient
\[
\mathbf{S}(r,2r)=\mathrm{SGr}(r,2r)^\circ/\mathbb{G}_m^{2r-1}\subseteq\mathbf{X}(r,2r).
\]
Recall that, although $\mathbf{S}(r,2r)\subseteq\mathbf{X}(r,2r)$, we think of it as a moduli space of points via the Gelfand--MacPherson correspondence (see Section~\ref{sec:mod-pf-pts-and-lines-in-P2}).
\end{definition}

From now on, we  consider the case $r=3$. In this case, six points in $\PP^2$ are self-dual if and only if they lie on a conic.  More precisely, a $6$-tuple $\mathbf{p}=(p_1,\ldots,p_6)$ of points in $\mathbb{P}^2$ lie on a conic if and only if their images under the degree two Veronese embedding $v\colon\mathbb{P}^2\hookrightarrow\mathbb{P}^5$ lie on a hyperplane. In other words, the determinant of the $6\times6$ matrix whose columns are $v(p_1),\ldots,v(p_6)$ vanishes. As this condition is $\mathrm{SL}_3$-invariant, by the first fundamental theorem on invariant theory, this determinant can be written as a polynomial in the maximal minors of the matrix $(p_1,\ldots,p_6)$ of coordinates of the points. After applying the straightening algorithm \cite{Stu08}, this simplifies to:
\begin{equation}
\label{eq:6-points-on-a-conic-equation}
\psi(\mathbf{p})=|p_1p_2p_3|\cdot|p_1p_4p_5|\cdot|p_3p_5p_6|\cdot|p_2p_4p_6| - |p_1p_2p_4|\cdot|p_1p_3p_5|\cdot|p_4p_5p_6|\cdot|p_2p_3p_6|.
\end{equation}

As self-dual point configurations can degenerate, $\mathbf{S}(3,6)$ is not compact. We will study the following compactification.

\begin{definition}
\label{def:compactification-for-moduli-of-6-self-dual-points-family-sections}
Let $\overline{\mathbf{S}}_{\mathrm{GP}}(3,6)$ be the Zariski closure of $\mathbf{S}(3,6)$ inside $\overline{\mathbf{X}}_{\mathrm{GP}}(3,6)$.
\end{definition}

\subsection{The normalization of \texorpdfstring{$\overline{\mathbf{S}}_{\mathrm{GP}}(3,6)$}{barSGP(3,6)} is tropical}
\label{sec:trop_comp_6pts}

We start by introducing the fan that will describe $\overline{\mathbf{S}}_{\mathrm{GP}}(3,6)^\nu$.

\begin{definition}
We denote by $\Sigma^{\rm sd}(3,6)$ the polyhedral fan arising from the tropicalized self-dual Grassmannian induced by the secondary fan structure of the tropical Grassmannian.
More explicitly, let $L^{\mathrm{sd}}\subseteq\mathbb{R}^{\binom{6}{3}}/L$
be the linear space spanned by the vectors of type $e_{i_1i_2i_3}+e_{i_4i_5i_6}$:
\[
L^{\mathrm{sd}}:=\raisebox{.2em}{$\Bigg(\mathrm{Span}_\mathbb{R}\bigg\{ e_I+e_{I^c} \;\bigg|\; I\in\binom{[6]}{3}\bigg\}+L\Bigg)$}\bigg/\raisebox{-.2em}{$L$.}
\]

Then, $\Sigma^{\rm sd}(3,6)= \{ \sigma \cap L^{\mathrm{sd}} \,|\, \sigma \in \Sigma(3,6) \}$ \cite[Theorem~6.5]{GHSV24}.
From \cite[Section~6]{GHSV24} we know that $\Sigma^{\rm sd}(3,6)$ has f-vector $(1,25,105,105)$. Its rays are generated by $e_{i_1i_2i_3}+e_{i_4i_5i_6}$ (type E$^{\rm sd}$) and $f_{i_1i_2i_3i_4}$ (type F). Its maximal cones come in $2$ flavors: $15$ of type FFF and $90$ of type E$^{\rm sd}$FF, which arise from the intersection of the maximal cones of $\Sigma(3,6)$ of type FFFGG and EEFF(a) with the linear space $L^{\rm sd}$.
\end{definition}

\begin{definition}
\label{def:sd_refined_fan}
We denote by $\widehat{\Sigma}^{\mathrm{sd}}(3,6)$ the polyhedral fan arising as the common refinement of $\Sigma^{\mathrm{sd}}(3,6)$ with  $\widehat{\Sigma}(3,6)$. 
Equivalently, the set of cones of $\widehat{\Sigma}^{\mathrm{sd}}(3,6)$ is given by
 \[
    \{ \sigma \cap \tau \,|\, \sigma \in \widehat{\Sigma}(3,6),\,\tau \in \Sigma^{\mathrm{sd}}(3,6) \}.
\]
\end{definition}

We now connect $\widehat{\Sigma}^{\mathrm{sd}}(3,6)$ to the moduli space $\overline{\mathbf{S}}_{\mathrm{GP}}(3,6)$.

\begin{definition}
Let $L_{\mathbb{Z}}^{\mathrm{sd}}$ be the $\mathbb{Z}$-span of the generators of $L^{\mathrm{sd}}$ and let $H$ be the torus $L_{\mathbb{Z}}^{\mathrm{sd}}\otimes\mathbb{G}_m$.
Let $Y_{\widehat{\Sigma}^{\mathrm{sd}}(3,6)}^H$ be the toric variety with dense open subtorus $H$ and fan $\widehat{\Sigma}^{\mathrm{sd}}(3,6)$. Then, we define $\widehat{\mathbf{S}}(3,6)$ as the closure of $\mathbf{S}(3,6)$ in $Y_{\widehat{\Sigma}^{\mathrm{sd}}(3,6)}^H$.

Recall from Section~\ref{sec:refinedfan} the torus $T=\mathbb{G}_m^{\binom{6}{3}-1}/\mathbb{G}_m^5$ such that the tropical compactification $\widehat{\mathbf{X}}(3,6)$ is a closed subset of the toric variety $Y_{\widehat{\Sigma}(3,6)}^T$. The inclusion of lattices $L_{\mathbb{Z}}^{\mathrm{sd}}\hookrightarrow N_T=\mathrm{Hom}(\mathbb{G}_m,T)$ realizes $H$ as a subtorus of $T$ and induces an inclusion of toric varieties $Y_{\widehat{\Sigma}^{\mathrm{sd}}(3,6)}^H\hookrightarrow Y_{\widehat{\Sigma}(3,6)}^T$. In particular, we have the following commutative diagram:
\begin{center}
\begin{tikzpicture}[>=angle 90]
\matrix(a)[matrix of math nodes,
row sep=2em, column sep=2em,
text height=2ex, text depth=1ex]
{\mathbf{S}(3,6)&\widehat{\mathbf{S}}(3,6)&Y_{\widehat{\Sigma}^{\mathrm{sd}}(3,6)}^H\\
\mathbf{X}(3,6)&\widehat{\mathbf{X}}(3,6)&Y_{\widehat{\Sigma}(3,6)}^T.\\};
\path[right hook->] (a-1-1) edge node[]{}(a-1-2);
\path[right hook->] (a-1-1) edge node[]{}(a-2-1);
\path[right hook->] (a-1-2) edge node[]{}(a-1-3);
\path[right hook->] (a-1-2) edge node[]{}(a-2-2);
\path[right hook->] (a-1-3) edge node[]{}(a-2-3);
\path[right hook->] (a-2-1) edge node[]{}(a-2-2);
\path[right hook->] (a-2-2) edge node[]{}(a-2-3);
\end{tikzpicture}
\end{center}
\end{definition}

\begin{proposition}
\label{prop:widehatS(3,6)-is-normal-and-tropical}
$\widehat{\mathbf{S}}(3,6)$ is a normal tropical compactification of $\mathbf{S}(3,6)$.
\end{proposition}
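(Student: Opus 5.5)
The plan is to reduce everything to the known structure of $\mathbf{S}(3,6)\cong\mathrm{M}_{0,6}$ and of its tropicalization. First I will check that $\mathbf{S}(3,6)$ is a closed very affine subvariety of $H$. Inside $T$, the self-duality relations $P_I=\prod_{i\in I}\lambda_i\,\mathrm{sign}(I,I^c)P_{I^c}$ mean that, after the torus rescaling by $\mathbb{G}_m^5$ (which absorbs the $\lambda_i$), the ratios $P_I/P_{I^c}$ are constant up to sign. Hence $\mathbf{S}(3,6)$ lies in a translate of the subtorus $H$, whose cocharacter lattice is $L^{\mathrm{sd}}_{\mathbb{Z}}$. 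I will use the same translation normalization as \cite{GHSV24}, so that $\mathbf{S}(3,6)\subseteq H$ is closed. By \cite[Theorem~6.5]{GHSV24}, $\trop(\mathbf{S}(3,6))=|\Sigma^{\mathrm{sd}}(3,6)|=|\trop(\mathbf{X}(3,6))|\cap L^{\mathrm{sd}}$. Since $\widehat{\Sigma}(3,6)$ refines $\Sigma(3,6)$ and has the same support, the support of $\widehat{\Sigma}^{\mathrm{sd}}(3,6)$ equals $\trop(\mathbf{S}(3,6))$. By Tevelev's criterion, the closure $\widehat{\mathbf{S}}(3,6)$ in $Y^H_{\widehat{\Sigma}^{\mathrm{sd}}(3,6)}$ is therefore proper.

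The core step is faithful flatness of the multiplication map $H\times\widehat{\mathbf{S}}(3,6)\to Y^H_{\widehat{\Sigma}^{\mathrm{sd}}(3,6)}$. Here I will exploit that $\mathbf{S}(3,6)\cong\mathrm{M}_{0,6}$ is schön. Concretely, $\mathbf{S}(3,6)$ is the image of $\mathrm{M}_{0,6}$ under the Veronese map $\mathbb{P}^1\to$ conic. The Plücker coordinates of six points on a conic factor as $|p_ip_jp_k|=\pm\,\delta_{ij}\delta_{ik}\delta_{jk}$ in the binary brackets $\delta_{ab}$ of the $\mathbb{P}^1$ parameters. So the embedding $\mathbf{S}(3,6)\subseteq H$ is a monomial change of the standard embedding $\mathrm{M}_{0,6}\subseteq\mathbb{G}_m^{\binom{6}{2}}/\mathbb{G}_m^{6}$. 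That is, it is obtained from the intrinsic torus of $\mathrm{M}_{0,6}$ by a homomorphism of tori; I will argue this is an isomorphism onto $H$ by comparing ranks, both being $9$. Since $\mathrm{M}_{0,6}$ is schön with tropicalization the space of phylogenetic trees, every initial degeneration is smooth, and schönness is independent of the fan structure. By \cite[Theorem~1.4]{Tev07} (or the standard criterion in \cite[Section~6.4]{MS15}), the closure in the toric variety of any fan supported on the tropicalization, in particular $\widehat{\Sigma}^{\mathrm{sd}}(3,6)$, is a tropical compactification. It is moreover smooth in the toroidal sense: locally it is a smooth variety times a toric variety of the fan.

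Normality then follows from the fact that, for a schön very affine variety, the multiplication map $H\times\widehat{\mathbf{S}}(3,6)\to Y$ is smooth. Since $Y^H_{\widehat{\Sigma}^{\mathrm{sd}}(3,6)}$ is normal as a toric variety, smooth descent gives that $H\times\widehat{\mathbf{S}}(3,6)$ is normal, hence so is $\widehat{\mathbf{S}}(3,6)$. The step I expect to be the main obstacle is the identification of the embedding $\mathbf{S}(3,6)\subseteq H$ with the intrinsic torus embedding of $\mathrm{M}_{0,6}$ up to a torus isomorphism and translation. This requires verifying that the lattice $L^{\mathrm{sd}}_{\mathbb{Z}}$ matches exactly the image of the character lattice of the $\mathrm{M}_{0,6}$ torus, not merely a finite-index sublattice. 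I would check this by writing the $e_I+e_{I^c}$ in terms of the tropical brackets $\delta_{ab}$ and computing the index. If only a finite-index statement holds, I would instead apply Tevelev's criterion directly to the initial degenerations, which are still smooth, so faithful flatness and normality persist.
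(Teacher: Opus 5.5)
Your proposal is correct and follows essentially the paper's route: identify $\mathbf{S}(3,6)$ with $\mathbf{X}(2,6)\cong\mathrm{M}_{0,6}$ through the Pachter--Speyer (Veronese) monomial map, use that $\mathrm{M}_{0,6}$ is sch\"on \cite{HKT09}, identify the tropicalization with $|\Sigma^{\mathrm{sd}}(3,6)|$ via \cite[Theorem~6.5]{GHSV24}, and conclude with Tevelev's results (the paper passes to $\widehat{\Sigma}^{\mathrm{sd}}(3,6)$ by the refinement statement \cite[Proposition~2.5]{Tev07} and cites \cite[Theorem~1.4]{Tev07} for normality, which your smoothness-of-multiplication argument reproduces). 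The torus identification you single out is taken for granted in the paper, and your handling of it has two weak points: equal ranks only give an isogeny, and sch\"onness is not preserved under isogenies in general, so your fallback is not automatic; the fact you actually need does hold, since $t_{ij}t_{ik}t_{jk}=\lambda_i\lambda_j\lambda_k$ for all triples forces $t_{ij}=\mu_i\mu_j$ for some $\mu$, so the Pachter--Speyer homomorphism $\mathbb{G}_m^{15}/\mathbb{G}_m^{6}\rightarrow\mathbb{G}_m^{20}/\mathbb{G}_m^{6}$ is injective and $\mathbf{S}(3,6)$ sits in its image subtorus (cocharacter lattice $L^{\mathrm{sd}}\cap N_T$) exactly as $\mathrm{M}_{0,6}$ sits in its intrinsic torus.
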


\begin{proof}
As in \cite{GHSV24}, let us consider the following monomial map in the Pl\"ucker coordinates, which was studied by Pachter--Speyer (see \cite[Section~3]{PS04}):
\begin{align*}
\mathrm{Gr}(2,6)&\dashrightarrow\mathrm{Gr}(3,6)\\
[\ldots:p_{ij}:\ldots]&\mapsto[\ldots:p_{ijk}:\ldots],
\end{align*}
where $p_{ijk}=p_{ij}p_{ik}p_{jk}$. This restricts to a morphism $\mathrm{Gr}(2,6)^\circ\rightarrow\mathrm{Gr}(3,6)^\circ$, which is birational onto the self-dual Grassmannian $\mathrm{SGr}(3,6)^\circ$ and descends to an isomorphism
\[
\mathbf{X}(2,6)=\mathrm{Gr}(2,6)^\circ/(\mathbb{G}_m^6/\mathrm{diag}(\mathbb{G}_m))\xrightarrow{\cong}\mathrm{Gr}(3,6)^\circ/(\mathbb{G}_m^6/\mathrm{diag}(\mathbb{G}_m))=\mathbf{S}(3,6).
\]
By \cite[Theorem~1.11]{HKT09}, we have that $\mathbf{X}(2,6)\cong\mathrm{M}_{0,6}$ is H\"ubsch, and hence also Sch\"on (see \cite[Definition~1.9]{HKT09}). It follows that the very affine variety $\mathbf{S}(3,6)$ is Sch\"on.

Now, let us consider the tropical compactification $\overline{\mathbf{X}}(2,6)$ associated to the fan $\mathrm{trop}(\mathrm{Gr}(2,6))$. By \cite[Theorem~6.5]{GHSV24} we have that $\mathrm{trop}(\mathrm{Gr}(2,6))\cong\Sigma^{\mathrm{sd}}(3,6)$. Hence, $\Sigma^{\mathrm{sd}}(3,6)$ induces a tropical compactification of $\mathbf{S}(3,6)$.

Since the fan $\widehat{\Sigma}^{\mathrm{sd}}(3,6)$ refines $\Sigma^{\mathrm{sd}}(3,6)$, we have that $\widehat{\mathbf{S}}(3,6)$ is tropical by \cite[Proposition~2.5]{Tev07} and normal by \cite[Theorem~1.4]{Tev07}.
\end{proof}

\begin{theorem}
\label{thm:SGP36}
The normalization $\overline{\mathbf{S}}_\mathrm{GP}(3,6)^\nu$ is isomorphic to $\widehat{\mathbf{S}}(3,6)$. In particular, $\overline{\mathbf{S}}_\mathrm{GP}(3,6)^\nu$ is a tropical compactification.
\end{theorem}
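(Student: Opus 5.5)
The plan is to compare the two compactifications inside the ambient toric variety $Y_{\widehat{\Sigma}(3,6)}^T$. By the main theorem of \cite[Section~12]{ST22}, the normalization $\nu\colon\overline{\mathbf{X}}_{\mathrm{GP}}(3,6)^\nu\to\overline{\mathbf{X}}_{\mathrm{GP}}(3,6)$ identifies the source with $\widehat{\mathbf{X}}(3,6)\subseteq Y_{\widehat{\Sigma}(3,6)}^T$. Under this identification I consider the closure $\overline{\mathbf{S}}'$ of $\mathbf{S}(3,6)$ in $\widehat{\mathbf{X}}(3,6)$; equivalently, this is the closure of $\mathbf{S}(3,6)$ in $Y_{\widehat{\Sigma}(3,6)}^T$. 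The composition of $\nu$ with the inclusion maps $\overline{\mathbf{S}}'$ finitely and birationally onto $\overline{\mathbf{S}}_{\mathrm{GP}}(3,6)$, since $\nu$ is finite and an isomorphism over the interior. Hence $\overline{\mathbf{S}}_{\mathrm{GP}}(3,6)^\nu\cong(\overline{\mathbf{S}}')^\nu$, and it suffices to show $(\overline{\mathbf{S}}')^\nu\cong\widehat{\mathbf{S}}(3,6)$.

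By the commutative diagram preceding Proposition~\ref{prop:widehatS(3,6)-is-normal-and-tropical}, the inclusion $Y^H_{\widehat{\Sigma}^{\mathrm{sd}}(3,6)}\hookrightarrow Y^T_{\widehat{\Sigma}(3,6)}$ gives a morphism $\widehat{\mathbf{S}}(3,6)\to\overline{\mathbf{S}}'$ extending the identity on $\mathbf{S}(3,6)$. This map is well defined and proper because every cone of $\widehat{\Sigma}^{\mathrm{sd}}(3,6)$ lies in a cone of $\widehat{\Sigma}(3,6)$, by Definition~\ref{def:sd_refined_fan}. Its source is normal by Proposition~\ref{prop:widehatS(3,6)-is-normal-and-tropical}, so it factors through $(\overline{\mathbf{S}}')^\nu$. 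It is also birational. Since $(\overline{\mathbf{S}}')^\nu$ is normal, Zariski's main theorem reduces the problem to showing that $\widehat{\mathbf{S}}(3,6)\to\overline{\mathbf{S}}'$ is quasi-finite.

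To prove quasi-finiteness, I would use the stratifications. The orbit of $Y^H$ indexed by a cone $\sigma\cap\tau$ maps into the orbit of $Y^T$ indexed by the smallest cone $\sigma'\in\widehat{\Sigma}(3,6)$ containing it, through the induced map of quotient tori $H/H_{\sigma\cap\tau}\to T/T_{\sigma'}$. The kernel of this map has dimension $\dim(\mathrm{span}(\sigma')\cap L^{\mathrm{sd}})-\dim(\sigma\cap\tau)$. The key point is therefore the equality
\[
\mathrm{span}(\sigma')\cap L^{\mathrm{sd}}=\mathrm{span}(\sigma'\cap L^{\mathrm{sd}}).
\]
Here $\sigma'\cap L^{\mathrm{sd}}$ should itself be a cone of $\widehat{\Sigma}^{\mathrm{sd}}(3,6)$ whose relative interior contains that of $\sigma\cap\tau$, which forces the two cones to coincide. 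Since the relevant part of the support is $|\Sigma^{\mathrm{sd}}(3,6)|=|\widehat{\Sigma}(3,6)|\cap L^{\mathrm{sd}}$, this is a statement about how $L^{\mathrm{sd}}$ meets each cone. Given this equality, each boundary stratum of $\widehat{\mathbf{S}}(3,6)$ is a closed subvariety of a torus orbit mapped to another orbit by a finite map of tori, so fibers are finite and $\widehat{\mathbf{S}}(3,6)\to\overline{\mathbf{S}}'$ is quasi-finite, hence finite and birational onto $\overline{\mathbf{S}}'$. Therefore $\widehat{\mathbf{S}}(3,6)\cong(\overline{\mathbf{S}}')^\nu\cong\overline{\mathbf{S}}_{\mathrm{GP}}(3,6)^\nu$. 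The final clause, that $\overline{\mathbf{S}}_{\mathrm{GP}}(3,6)^\nu$ is a tropical compactification, then follows from Proposition~\ref{prop:widehatS(3,6)-is-normal-and-tropical}.

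I expect the main obstacle to be verifying the displayed span equality. In general, a linear subspace can meet a cone only in a lower-dimensional face while meeting its span in more, and that would produce positive-dimensional fibers. This is a finite combinatorial check. For the cones EEFF(a) and FFFGG together with their $12$ subcones from Table~\ref{tab:refinementTropGr}, I would compute directly, using the involution $e_I\mapsto e_{I^c}$. This involution preserves $\widehat{\Sigma}(3,6)$, and $L^{\mathrm{sd}}$ is its fixed locus, so averaging a point of $\mathrm{span}(\sigma')\cap L^{\mathrm{sd}}$ over the involution should land it in a self-dual cone. For the remaining cones, whose intersection with $L^{\mathrm{sd}}$ is contained in a lower-dimensional face, the corresponding torus orbits of $Y^T$ do not meet $\overline{\mathbf{S}}'$ except through the closures of smaller cones. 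If this check runs into trouble, the fallback is to argue via arcs: every $K$-point of $\mathbf{S}(3,6)$ has tropicalization in $|\Sigma^{\mathrm{sd}}(3,6)|$, and one compares the limits in the two compactifications directly, using that both are determined by the cone containing the valuation vector.
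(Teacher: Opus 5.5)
Your argument is correct and has the same skeleton as the paper's proof. It restricts the finite birational map $\widehat{\mathbf{X}}(3,6)\to\overline{\mathbf{X}}_{\mathrm{GP}}(3,6)$ of \cite{ST22} to the closure of $\mathbf{S}(3,6)$. It then concludes with Zariski's Main Theorem and the normality of $\widehat{\mathbf{S}}(3,6)$ from Proposition~\ref{prop:widehatS(3,6)-is-normal-and-tropical}.

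The difference is one step. The paper takes $Y^H_{\widehat{\Sigma}^{\mathrm{sd}}(3,6)}\to Y^T_{\widehat{\Sigma}(3,6)}$ to be an inclusion, so that $\widehat{\mathbf{S}}(3,6)$ literally is your $\overline{\mathbf{S}}'$. You only use the toric morphism and prove it is finite on $\widehat{\mathbf{S}}(3,6)$, orbit by orbit. The paper's route is shorter. Yours makes explicit the one point the paper leaves implicit, and it needs only finiteness, never injectivity or a closed embedding of toric varieties.

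The span equality you flag needs no case check. However, your averaging sentence is vacuous as written: a point of $\mathrm{span}(\sigma')\cap L^{\mathrm{sd}}$ is already fixed by $\iota\colon e_I\mapsto e_{I^c}$. Two observations are missing.
\begin{enumerate}
\item $\sigma'$ is $\iota$-stable. Indeed, $\iota$ preserves $\widehat{\Sigma}(3,6)$ and fixes $\mathrm{relint}(\rho)\subseteq L^{\mathrm{sd}}$ pointwise, so $\iota(\sigma')$ is again the cone whose relative interior contains $\mathrm{relint}(\rho)$.
\item $\widehat{\Sigma}(3,6)$ is simplicial, and $\iota$ permutes the ray generators of $\sigma'$. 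It fixes the F, C, D generators modulo $L$, swaps the two G tips of each bipyramid, and maps $e_I$ to $e_{I^c}$.
\end{enumerate}
Hence $\mathrm{span}(\sigma')\cap L^{\mathrm{sd}}=\mathrm{span}(\sigma')^{\iota}$ is spanned by the $\iota$-orbit sums of the rays of $\sigma'$. These sums lie in $\sigma'\cap L^{\mathrm{sd}}$, and they span exactly $\sigma'\cap L^{\mathrm{sd}}$. That cone lies in $\widehat{\Sigma}^{\mathrm{sd}}(3,6)$ and contains $\rho$ as a face. By simpliciality its proper faces miss $\mathrm{relint}(\sigma')$, so it equals $\rho$. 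This gives $\dim(\mathrm{span}(\sigma')\cap L^{\mathrm{sd}})=\dim\rho$ uniformly, for all cones at once.

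Two smaller points. Your remark about the ``remaining cones'' is not needed: only the minimal cones $\sigma'$ enter the check, and surjectivity onto $\overline{\mathbf{S}}'$ follows from properness and dominance. Properness itself comes from completeness of $\widehat{\mathbf{S}}(3,6)$, or from $|\widehat{\Sigma}^{\mathrm{sd}}(3,6)|=|\widehat{\Sigma}(3,6)|\cap L^{\mathrm{sd}}$. It does not come from cones lying in cones, which only shows the map is defined.
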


\begin{proof}
From \cite[Proof of Theorem~1.7]{ST22} we have a finite birational morphism $\varphi\colon\widehat{\mathbf{X}}(3,6)\rightarrow\overline{\mathbf{X}}_{\rm GP}(3,6)$ that is the identity on the common open subset $\mathbf{X}(3,6)$. Hence, $\varphi$ is also the identity when restricted to $\mathbf{S}(3,6)\subseteq\mathbf{X}(3,6)$. We have that the compactifications $\widehat{\mathbf{S}}(3,6)$ and $\overline{\mathbf{S}}_{\mathrm{GP}}(3,6)$ arise as the closure of $\mathbf{S}(3,6)$ inside $\widehat{\mathbf{X}}(3,6)$ and $\overline{\mathbf{X}}_{\rm GP}(3,6)$, respectively. Therefore, by continuity, $\varphi(\widehat{\mathbf{S}}(3,6))\subseteq\overline{\mathbf{S}}_{\mathrm{GP}}(3,6)$. Hence, $\varphi|_{\widehat{\mathbf{S}}(3,6)}\colon\widehat{\mathbf{S}}(3,6)\rightarrow\overline{\mathbf{S}}_{\mathrm{GP}}(3,6)$ is a finite birational morphism, which induces an isomorphism between the respective normalizations by Zariski's Main Theorem. The conclusion follows as $\widehat{\mathbf{S}}(3,6)$ is normal and tropical by Proposition~\ref{prop:widehatS(3,6)-is-normal-and-tropical}.
\end{proof}

Next, we study the fan $\widehat{\Sigma}^{\mathrm{sd}}(3,6)$. More precisely, we compute its f-vector and describe the bijection between $\widehat{\Sigma}^{\mathrm{sd}}(3,6)$ and the boundary strata of $\overline{\mathbf{S}}_{\mathrm{GP}}(3,6)^\nu$. To accomplish this, we first need to better understand the combinatorics of $\widehat{\Sigma}(3,6)$.

\subsection{The combinatorics of the fan \texorpdfstring{$\widehat{\Sigma}(3,6)$}{hatSigma(3,6)}}

The refinement $\widehat{\Sigma}(3,6)\preccurlyeq\Sigma(3,6)$ splits the $15$ bipyramid cones into $12\cdot 15$ simplicial cones over tetrahedra. To do this formally, we introduce two new types of rays.

\begin{definition}
In $\RR^{\binom{6}{3}}/L$, let us define the following vectors:
\begin{itemize}
\item type C:  $c_{i_3i_4i_5i_6}=f_{i_1i_2i_3i_4}+f_{i_1i_2i_5i_6}$,
\item type D: $d_{i_3i_4i_5i_6} = f_{i_1i_2i_3i_4}+f_{i_1i_2i_5i_6}+f_{i_3i_4i_5i_6} = g_{i_1i_2i_3i_4i_5i_6}+g_{i_1i_2i_5i_6i_3i_4}$.
\end{itemize}
For the above vectors, the pairing $(i_3,i_4),(i_5,i_6)$ is implicit.
\end{definition}

We now explain geometrically how the rays generated by these new vectors relate to the fan $\widehat{\Sigma}(3,6)$. A ray of type C is generated by the midpoint of an edges of the bipyramid FFFGG connecting two vertices of type F. A ray type D is generated by the midpoint of the line segment connecting the two vertices of type G of one bipyramid. Equivalently, it is generated by the barycenter of the triangle with vertices of type F.
Table~\ref{tab:refinementTropGr} illustrates the refinement of the bipyramid into $12$ tetrahedra and the role of the vectors of type C and D within it. In order to obtain the minimal refined fan, some neighboring cones over tetrahedra are also affected, which is also shown in Table~\ref{tab:refinementTropGr}.

\begin{table}[hbtp]
\centering
\caption{The refinement $\widehat{\Sigma}(3,6)$ of the tropical Grassmannian.}
\label{tab:refinementTropGr}
\label{subdivisionofthebipyramid}
			\begin{tabular}{|>{\centering\arraybackslash}m{6.9cm}|>{\centering\arraybackslash}m{6.9cm}|}
				\hline
				\begin{tikzpicture}[scale=0.7]
					
					\draw[line width=1pt] (0,0) -- (5,1);
					\draw[dashed,line width=1pt] (5,1) -- (1.5,2);
					\draw[dashed,line width=1pt] (1.5,2) -- (0,0);
					\draw[dashed,line width=1pt] (1.5,2) -- (6.5/3,4.5);
					\draw[line width=1pt] (0,0) -- (6.5/3,4.5);
					\draw[line width=1pt] (5,1) -- (6.5/3,4.5);
					\draw[dashed,line width=1pt] (1.5,2) -- (6.5/3,-2.5);
					\draw[line width=1pt] (0,0) -- (6.5/3,-2.5);
					\draw[line width=1pt] (5,1) -- (6.5/3,-2.5);
					
					\draw[dotted,line width=1pt] (6.5/3,4.5) -- (6.5/3,-2.5);
					\draw[dotted,line width=1pt] (0,0) -- (6.5/2,3/2);
					\draw[dotted,line width=1pt] (1.5,2) -- (5/2,1/2);
					\draw[dotted,line width=1pt] (5,1) -- (1.5/2,2/2);
					
					\draw[dotted,line width=1pt] (6.5/3,4.5) -- (6.5/2,3/2);
					\draw[dotted,line width=1pt] (6.5/3,4.5) -- (5/2,1/2);
					\draw[dotted,line width=1pt] (6.5/3,4.5) -- (1.5/2,2/2);
					
					\draw[dotted,line width=1pt] (6.5/3,-2.5) -- (6.5/2,3/2);
					\draw[dotted,line width=1pt] (6.5/3,-2.5) -- (5/2,1/2);
					\draw[dotted,line width=1pt] (6.5/3,-2.5) -- (1.5/2,2/2);
					
					\node at (2.3,5) {$g_{i_1i_2i_5i_6i_3i_4}$};
					\node at (2.3,-3) {$g_{i_1i_2i_3i_4i_5i_6}$};
					\node at (-0.75,0) {$f_{i_3i_4i_5i_6}$};
					\node at (5.8,1) {$f_{i_1i_2i_5i_6}$};
					\node at (0.05,2.2) {$f_{i_1i_2i_3i_4}$};
					
					\node at (5.0,-1.0) {$c_{i_1i_2i_3i_4}$};
					\draw[lightgray, dashed,line width=0.5pt] (4.1,-0.7) -- (2.5,0.5);
					\node at (4.15,2) {$c_{i_3i_4i_5i_6}$};
					\node at (-0.35,1.15) {$c_{i_1i_2i_5i_6}$};
					
					\node at (5.35,4.25) {$d_{i_3i_4i_5i_6}$};
					\draw[lightgray, dashed,line width=0.5pt] (4.5,4) -- (2.18,1);
					
					\node at (6,-2.75) {FFFGG};
				\end{tikzpicture}
				&
				\begin{tikzpicture}[scale=0.9]
					
					\draw[line width=1pt] (0,0) -- (5,1);
					\draw[dashed,line width=1pt] (5,1) -- (1.5,2);
					\draw[dashed,line width=1pt] (1.5,2) -- (0,0);
					\draw[dashed,line width=1pt] (1.5,2) -- (6.5/3,4.5);
					\draw[line width=1pt] (0,0) -- (6.5/3,4.5);
					\draw[line width=1pt] (5,1) -- (6.5/3,4.5);
					
					\draw[dotted,line width=1pt] (1.5,2) -- (5/2,1/2);
					
					\draw[dotted,line width=1pt] (6.5/3,4.5) -- (5/2,1/2);
					
					\node at (2.3,5) {$e_{i_3i_4i_5}$ };
					\node at (2.3,2.2) {$e_{i_1i_2i_5}$};
					\node at (5.75,1) {$f_{i_3i_4i_5i_6}$};
					\node at (-0.5,0) {$f_{i_1i_2i_5i_6}$};
					\node at (3,0.15) {$c_{i_1i_2i_3i_4}$};
					
					\node at (5,-1) {EEFF(b)};
				\end{tikzpicture}
				\\
				\hline
				\begin{tikzpicture}[scale=0.9]
					
					\draw[line width=1pt] (0,0) -- (5,1);
					\draw[dashed,line width=1pt] (5,1) -- (1.5,2);
					\draw[dashed,line width=1pt] (1.5,2) -- (0,0);
					\draw[dashed,line width=1pt] (1.5,2) -- (6.5/3,4.5);
					\draw[line width=1pt] (0,0) -- (6.5/3,4.5);
					\draw[line width=1pt] (5,1) -- (6.5/3,4.5);
					
					\draw[dotted,line width=1pt] (1.5,2) -- (5/2,1/2);
					
					\draw[dotted,line width=1pt] (6.5/3,4.5) -- (5/2,1/2);
					
					\node at (2.3,5) {$e_{i_3i_4i_6}$};
					\node at (2.3,2.2) {$e_{i_1i_2i_5}$};
					\node at (5.75,1) {$f_{i_3i_4i_5i_6}$};
					\node at (-0.5,0) {$f_{i_1i_2i_5i_6}$};
					\node at (3,0.15) {$c_{i_1i_2i_3i_4}$};
					
					\node at (5,0) {EEFF(a)};
				\end{tikzpicture}
				&
				\begin{tikzpicture}[scale=0.9]
					
					\draw[line width=1pt] (0,0) -- (5,1);
					\draw[dashed,line width=1pt] (5,1) -- (1.5,2);
					\draw[dashed,line width=1pt] (1.5,2) -- (0,0);
					\draw[dashed,line width=1pt] (1.5,2) -- (6.5/3,4.5);
					\draw[line width=1pt] (0,0) -- (6.5/3,4.5);
					\draw[line width=1pt] (5,1) -- (6.5/3,4.5);
					
					\draw[dotted,line width=1pt] (1.5,2) -- (5/2,1/2);
					
					\draw[dotted,line width=1pt] (6.5/3,4.5) -- (5/2,1/2);
					
					\node at (2.3,5) {$g_{i_1i_2i_3i_4i_5i_6}$};
					\node at (5.75,1) {$f_{i_1i_2i_3i_4}$};
					\node at (-0.6,0) {$f_{i_1i_2i_5i_6}$};
					\node at (2.3,2.2) {$e_{i_2i_5i_6}$};
					\node at (3,0.15) {$c_{i_3i_4i_5i_6}$};
					\node at (5,0) {EFFG};
				\end{tikzpicture}
				\\
				\hline
			\end{tabular}
\end{table}

\begin{theorem}
\label{thm:f-vector-of-widehat-Sigma(3,6)}
The refined fan $\widehat{\Sigma}(3,6)$ has f-vector $(1,125,970,2250,1530)$. The number of cones of a given type and the correspondence between the strata of $\overline{\mathbf{X}}_\mathrm{GP}(3,6)^\nu$ and the cones of $\widehat{\Sigma}(3,6)$ can be found in Table~\ref{tab:correspondencekapranov'sstrataandcones_refinement}.
\end{theorem}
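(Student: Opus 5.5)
The plan is to compute the f-vector of $\widehat{\Sigma}(3,6)$ from that of $\Sigma(3,6)$, namely $(1,65,535,1350,1005)$, by bookkeeping: count which cones are removed by the refinement and which new cones are added. The refinement is local. Only the $15$ bipyramid cones of type FFFGG are subdivided, together with the neighboring simplicial cones whose faces are subdivided, so that the result is again a fan. The neighbors are the cones containing an F--F edge of a bipyramid, which acquires a type C midpoint ray. These are exactly the EEFF(b), EEFF(a) and EFFG cones shown in Table~\ref{tab:refinementTropGr}.

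\emph{Rays.} First I would identify the new rays. Each bipyramid contributes one D ray. Each of its three F--F edges contributes a C ray. An edge $\{f_{i_1i_2i_3i_4}, f_{i_1i_2i_5i_6}\}$ lies in a unique bipyramid, namely the one determined by the pairing $(i_1i_2),(i_3i_4),(i_5i_6)$. This gives $45$ C rays and $15$ D rays, for a total of $65+45+15=125$ rays.

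\emph{Subdividing the bipyramid.} Next I would describe the subdivision of a single bipyramid explicitly, as in the picture. Take the triangle $F_1F_2F_3$ with edge midpoints $C_1,C_2,C_3$ and barycenter $D$. Coning the $6$ small triangles $\{F_i,C_j,D\}$ to the two tips $G,G'$ gives the $12$ tetrahedra. From this I count the interior faces of each dimension (interior edges, triangles, tetrahedra) that replace the old bipyramid faces. The bipyramid's own faces are also affected:
\begin{itemize}
\item each F--F edge is split in two;
\item each triangle FFG is split in two by the segment from $C$ to $G$;
\item the triangle FFF is not a face of $\Sigma(3,6)$, so only interior pieces arise there.
\end{itemize}

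\emph{Neighboring cones.} For every other maximal cone containing a split F--F edge, the edge subdivision forces that cone to split into two simplicial cones along the plane through $C$ and the opposite edge. I would count how many cones of each type EEFF(a), EEFF(b), EFFG contain a given F--F pair from a bipyramid. This uses the explicit ray combinatorics in \cite[Section~5]{SS04}, exploiting the $S_6$-symmetry: all $45$ special edges are equivalent, so it suffices to count for one edge and multiply by $45$. The same count is needed for the $3$-dimensional cones of type EFF, EEFF faces, etc., containing such an edge, since each of these also splits in two. Then
\[
\text{new }f_k=\text{old }f_k-(\#\text{split }k\text{-cones})+(\#\text{pieces}),
\]
where each split neighbor $k$-cone contributes $2$ pieces plus one new $(k-1)$-cone (the splitting wall).

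\emph{Verification and strata.} Finally I would check consistency, for instance via the Euler relation for the link (a $3$-dimensional complex). I would also double-check by a computer calculation, e.g.\ by computing the common refinement directly with the given ray generators. The correspondence with strata in Table~\ref{tab:correspondencekapranov'sstrataandcones_refinement} then follows from the orbit–cone correspondence for the tropical compactification $\widehat{\mathbf{X}}(3,6)\cong\overline{\mathbf{X}}_{\mathrm{GP}}(3,6)^\nu$. Under it, cones of $\Sigma(3,6)$ not touched by the refinement correspond to Kapranov's strata, and new cones correspond to strata of \cite[Tables~13--16]{ST22}, matched via the stable lattices of arcs with the corresponding tropical direction.

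The main obstacle is the neighbor count: determining exactly which non-bipyramid cones contain a given F--F edge, and how they split. This requires careful use of the SS04 combinatorics, and I would confirm it computationally.
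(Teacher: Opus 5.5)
Your plan is correct and is essentially the paper's proof. It uses the same bookkeeping: $45$ C rays and $15$ D rays, each bipyramid cut into $12$ CDFG cones, and each neighboring simplicial cone through a split FF edge cut in two by a wall through its C ray. The neighbor count you flag as the main obstacle is done in the paper directly from the ray combinatorics: each FF edge lies in $2$ FFG and $4$ EFF three-dimensional cones and in $2$ EEFF(a), $2$ EEFF(b), $4$ EFFG maximal cones, which yields $2250$ and $1530$; your Euler-characteristic check then closes ($65-535+1350-1005=-125=125-970+2250-1530$).
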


\begin{remark}
To compute the f-vector of $\widehat{\Sigma}^{\mathrm{sd}}(3,6)$ and its combinatorial types of cones, we do not need the full knowledge of the f-vector and cone types of $\widehat{\Sigma}(3,6)$. However, as the latter was not computed in \cite{ST22}, we take the chance to do it here.
\end{remark}

\begin{proof}[Proof of Theorem~\ref{thm:f-vector-of-widehat-Sigma(3,6)}]
As we observe from Table~\ref{tab:refinementTropGr}, we have two additional types of rays (Types C and D) to the $65$ rays of $\Sigma(3,6).$ As there are $15$ bipyramids of type FFFGG, there are $15$ rays of type D.
To count the rays of type C, we need to understand how the bipyramids FFFGG intersect each other. Assume two bipyramids share at least an edge, then this edge is either of type FF or of type FG. 
Given a vertex G there is exactly one other vertex G, such that the pair belongs to the same bipyramid FFFGG. 
Given an edge of type FF that is part of a bipyramid, e.g. $f_{1234}, f_{1256}$, then there is a unique third vertex of type F ($f_{3456}$) that will complete the triangular non-facet of a bipyramid. 
This follows from the combinatorics of the rays spanning the cone, as indicated in Table~\ref{tab:refinementTropGr}. Therefore, no two bipyramids can share an edge. Hence, the bipyramids intersect at most in vertices.
It follows that we have $3\cdot 15$ rays of type C. 
These considerations give the dimension $1$ entry of the f-vector: $65+15+3\cdot15=125$ rays.

In \cite[Section~5]{SS04} there is a list of all $2$-dimensional cones of the tropical Grassmannian $\trop(\Gr^\circ(3,6))$ with the Gr\"obner fan structure. This list contains $2$-dimensional cones of type GG, which do not appear in the secondary fan structure used in this paper, and which, therefore, need to be disregarded.
The $535$ $2$-dimensional cones of $\Sigma(3,6)$ come in five flavors: EE, FF, EF, EG, and FG. When considering the refinement as depicted in Table~\ref{tab:refinementTropGr}, we observe that the edges of type FF are split into two edges of type CF. There were $45$ edges of type FF before, which now give rise to $90$ edges of type CF. The refinement further adds new $2$-dimensional cones, which are not splits of existing $2$-dimensional cones. The new ones have types CD, CE, CG, DF, and DG. They are illustrated in Table~\ref{tab:refinementTropGr}. All $2$-dimensional cones containing a ray of type D are contained in the $15$ maximal cones of type FFFGG: There are $45 = 3\cdot 15 $ of type CD, $45 = 3\cdot 15 $ of type DF, and $30 = 2\cdot 15 $ of type DG. The cones of type CG are contained in faces of the cones of type FFFGG, which only intersect each other in rays of type F. Thus, we count $90 = 2\cdot 3\cdot 15$ cones of type CG. To obtain the count of the cones of type CE, we show that for a given ray of type C there exist four different rays of type E that form a cone with C. 
We know there are $45$ FF edges, which contain the $45$ type C rays. These edges are shared between $4$ types of cones: FFFGG (which do not contain E's, and therefore can be disregarded), EFFG, EEFF(a), and EEFF(b). Now, given one edge of type FF (e.g. $f_{1234}, f_{3456}$), one can build two maximal cones of type EEFF(a) using the pair $(e_{123},e_{456})$ or $(e_{124},e_{356})$,
two of type EEFF(b) using the pairs $(e_{123}, e_{356})$ or $(e_{124}, e_{456})$,
four of type EFFG using $g_{123456}$ with one of the following: $e_{123}$, $e_{456}$, $e_{124}$, and $e_{356}$. Thus, there are exactly four rays of type E ($e_{456}, e_{356}, e_{123}$, and $e_{124}$) that form a triangle with the given edge of type FF. So, there are $4\cdot 45 = 180$ cones of type CE.
In total, we obtain $535-45+90+45+45+30+90+180 = 970$ cones of dimension $2$.

Next, we compute the dimension $4$ entry of the f-vector. When refining $\Sigma(3,6)$, each of the $15$ cones of type FFFGG is split into $12$ cones of type CDFG. Additionally, the neighboring cones that intersect in the split faces of the bipyramid, which are of type EEFF(a), EEFF(b), and EFFG ($360$ in total) are each split into two. These splittings are shown in Table~\ref{tab:refinementTropGr}. The other maximal cones are not affected. In total, this gives $1005-15-360+15\cdot12+360\cdot2=1530$ maximal cones for $\widehat{\Sigma}(3,6)$.

Lastly, to prove the count of the $3$-dimensional cones in $\widehat{\Sigma}(3,6)$, we preliminarily need to count those of the $1350$ cones of dimension three in $\Sigma(3,6)$ that are affected by the refinement. These are only the cones of type FFG and type EFF. To obtain their number, we start from the cones of type FFFGG, which only intersect each other in rays of type F. There are $15$ cones of type FFFGG, each with six facets of type FFG (and no other facets). So, there are at least $90$ FFG cones. The only other type of maximal cones that have FFG facets are the EFFG cones. Since each FFG facet of an FFFGG bipyramid is shared by exactly two tetrahedra of type EFFG, and considering that there are exactly $180$ EFFG cones, we can conclude that there are exactly $90$ FFG cones. Next, let us consider the EFF cones. As different EFFG cones do not share the same EFF facet, we obtain at least $180$ cones of type EFF. Finally, no new EFF cones arise by considering the maximal cones of type EEFF(a)/(b), so we have exactly $180$ EFF cones.

We are now ready to compute the number of $3$-dimensional cones in $\widehat{\Sigma}(3,6)$. Consider Table~\ref{tab:refinementTropGr}. Each facet of type FFG is split into two facets of type CFG, each facet of type EFF is split into two of type CEF. Additional facets arising from the refinement are of type CDF, CDG, DFG, CEG, CEE(a), and CEE(b). There are $15$ rays of type D (one in each tetrahedron FFFGG), and for each one there are six choices of triangles CDF $6\cdot15 = 90$. The same count applies to the cones of type CDG and DFG. There is one facet of type CEG (resp. CEE(a), CEE(b)) for each cone of type EFFG (resp. EEFF(a), EEFF(b)), which yields $180$ (resp. $90$, $90$) $3$-dimensional cones. In conclusion, we obtain the following count for the $3$-dimensional cones of $\widehat{\Sigma}(3,6)$:
\begin{align*}
1350&-90\,\text{\scriptsize{\rm{(FFG)}}\normalsize}-180\,\text{\scriptsize{\rm{(EFF)}}\normalsize}+180\,\text{\scriptsize{\rm{(CFG)}}\normalsize}+360\,\text{\scriptsize{\rm{(CEF)}}\normalsize}+ 90\,\text{\scriptsize{\rm{(CDF)}}\normalsize}\\& +90\,\text{\scriptsize{\rm{(CDG)}}\normalsize}+90\,\text{\scriptsize{\rm{(DFG)}}\normalsize}+180\,\text{\scriptsize{\rm{(CEG)}}\normalsize}+90\,\text{\scriptsize{\rm{(CEE(a))}}\normalsize}+90\,\text{\scriptsize{\rm{(CEE(b))}}\normalsize} = 2250.
\end{align*} 
All types and numbers of the different cones of the fan $\widehat{\Sigma}(3,6)$ are listed in Table~\ref{tab:correspondencekapranov'sstrataandcones_refinement}.

Note that the count for the $3$-dimensional cones of types EEF(a)/(b), EFG, and EEG does not follow from the above proof and can also be not directly found in \cite[Section~5]{SS04}, but needs to be deduced from the count and combinatorics of the maximal cones as well as their intersection behavior.
\end{proof}

\begin{center}
\begin{longtable}{|>{\centering\arraybackslash}m{4.0cm}|>{\centering\arraybackslash}m{2.3cm}|>{\centering\arraybackslash}m{5.5cm}|>{\centering\arraybackslash}m{1cm}|}
\caption{Correspondence between strata of $\overline{\mathbf{X}}_{\mathrm{GP}}(3,6)^\nu$ and $\widehat{\Sigma}(3,6)$. The extra decoration (i)/(ii) of some of the combinatorial types is explained in Remark~\ref{rem:notation}.}
\label{tab:correspondencekapranov'sstrataandcones_refinement}
\\
\hline
stratum of $\overline{\mathbf{X}}_{\mathrm{GP}}(3,6)^\nu$  & comb. type & generators of the cone & count\\ 
\hline
\endfirsthead

\multicolumn{4}{c}
{{\bfseries \tablename\ \thetable{} -- continued from previous page}} \\
\hline 
stratum of $\overline{\mathbf{X}}_{\mathrm{GP}}(3,6)^\nu$  & comb. type & generators of the cone & count
\\ \hline 
\endhead

\hline \multicolumn{4}{|r|}{{Continued on next page}} \\ \hline
\endfoot
\hline 
\endlastfoot

$\#1$ & E & $e_{456}$
 & 20 \\
\hline
$\#2$ & EE(a) & $e_{123},\,e_{456}$
 & 10\\
\hline
$\#3$& EE(b) & $e_{124},\,e_{456}$ & 90
\\
\hline
$\#4$ & EEE & $e_{456},\,e_{124},\,e_{135}$ &120
\\
\hline
$\#5$ & F & $f_{1234}$ &15
\\
\hline
$\#6$ & EF(i) & $e_{123},\,f_{1234}$ & 60
\\
\hline
$\#7$ & EEEE & $e_{126},\,e_{235},\,e_{145},\,e_{346}$ & 30
\\
\hline
$\#8$ & EF(ii) & $e_{156},\,f_{1234}$ & 60
\\
\hline
$\#9$ & EEF(b) &$e_{123},\,e_{156},\,f_{1234}$ & 180
\\
\hline
$\#10$ & EEF(a)  &$e_{234},\,e_{156},\,f_{1234}$ & 60
\\
\hline
$\#11$ & G & $g_{125634}$ & 30
\\
\hline
$\#12$ & C  &$c_{3456}$ &45
\\
\hline
$\#12.1$ & CF  &$c_{3456},\,f_{1234}$ & 90
\\
\hline
$\#13$ & FG& $f_{1234},\,g_{123456}$ & 90 
\\
\hline
$\#14$ & EG & $e_{356},\,g_{125634}$ & 180 
\\
\hline
$\#15$ & CG & $c_{1256},\,g_{125634}$ &90
\\
\hline
$\#15.1$ & CFG(i)& $c_{1256},\,f_{3456},\,g_{125634}$ &90
\\
\hline
$\#15.2$ & CFG(ii)& $c_{1256},\,f_{1234},\,g_{125634}$ &90
\\
\hline
$\#16$ & CE & $c_{1256},\,e_{124}$ & 180
\\
\hline
$\#16.1$ & CEF(ii) & $c_{1256},\,e_{124},\,f_{3456}$ &180
\\
\hline
$\#16.2$ & CEF(i) & $c_{1256},\,e_{124},\,f_{1234}$ & 180
\\
\hline
$\#17$ & EFG(ii)& $e_{125},\,f_{3456},\,g_{125634}$ & 180
\\
\hline
$\#18$ & EFG(i) & $e_{356},\,f_{3456},\,g_{125634}$ & 180
\\
\hline
$\#19$ & EEG & $ e_{125},\,e_{356},\,g_{125634}$ & 360 
\\
\hline
$\#20$ & D & $d_{3456}$ & 15
\\
\hline
$\#20.1$ & DG & $d_{3456},\,g_{123456}$ &30
\\
\hline
$\#20.2$ & DF & $d_{3456},\,f_{1234}$ &45
\\
\hline
$\#20.3$ & CDF & $c_{1256},\,d_{3456},\,f_{1234}$ &90\\
\hline
$\#20.4$ & DFG & $d_{3456},\,f_{1234},\,g_{123456}$ &90
\\
\hline
$\#20.5$ & CDFG & $c_{1256},\,d_{3456},\,f_{1234},\,g_{123456}$ &180
\\
\hline
$\#20.6$ & CD & $c_{1256},\,d_{3456}$ &45
\\
\hline
$\#20.7$ & CDG &$c_{1256},\,d_{3456},\,g_{123456}$ &90
\\
\hline
$\#21$ & CEG & $c_{3456},\,e_{256},\,g_{123456}$ & 180
\\
\hline
$\#21.1$ & CEFG(i)& $c_{3456},\,e_{256},\,f_{1256},\,g_{123456}$ & 180
\\
\hline
$\#21.2$ & CEFG(ii) & $c_{3456},\,e_{256},\,f_{1234},\,g_{123456}$ & 180
\\
\hline
$\#22$ & EEFG & $e_{256},\,e_{346},\,f_{1256},\,g_{123456}$ &360
\\
\hline
$\#23$ & CEE(a) & $c_{3456},\,e_{134},\,e_{256}$ & 90
\\
\hline
$\#23.1$ & CEEF(a) & $c_{3456},\,e_{134},\,e_{256},\,f_{1256}$ & 180
\\
\hline
$\#24$ & CEE(b) & $c_{3456},\,e_{134},\,e_{156}$ & 90
\\
\hline
$\#24.1$ & CEEF(b) & $c_{3456},\,e_{134},\,e_{156},\,f_{1256}$ &180
\\
\hline
$\#25$ & EEEG & $e_{125},\,e_{234},\,e_{456},\,g_{125634}$ &240
\\
\hline
\end{longtable} 
\end{center}
\begin{remark}\label{rem:notation}
For lighter notation, in the third column of Table~\ref{tab:correspondencekapranov'sstrataandcones_refinement} we denote the vectors generating the cones up to permutation of the indices (instead of using notation like $e_{i_1i_2i_3}$). 
The choice of representatives follows the depiction of the corresponding $6$-pointed Mustafin degenerations of the strata as in \cite[Tables~13-16]{ST22}.

We also note that some combinatorial types of cones are generated by the same types of rays, and thus are counted together in the proof of Theorem \ref{thm:f-vector-of-widehat-Sigma(3,6)}. However, the combinatorics of the rays depends not just on the type but on the concrete generators of the cones. For example, the edge EF comes in two combinatorial flavors: EF(i) generated by $e_{i_1i_2i_3}, f_{i_1i_2i_3i_4}$ and EF(ii) generated by $e_{i_1i_5i_6}, f_{i_1i_2i_3i_4}$. This later also influences cones like CEF, depending on whether one chooses the left or right triangle in the figure at the bottom-right of Table~\ref{tab:refinementTropGr}.

Finally, we point out a subtlety when comparing Luxton's notation for the ray generators of the tropical Grassmannian $\mathrm{trop}(\mathrm{Gr}(3,6)^\circ)$ in \cite{Lux08} (this convention is also used in \cite{ST22}) with the original notation of Speyer--Sturmfels in \cite{SS04}. More specifically, Luxton's ray generator $(123)$ corresponds to Speyer--Sturmfels' $e_{456}$ because they induce the same matroid polytope subdivision of the hypersimplex
\[
\Delta(3,6)=\{(x_1,\ldots,x_6)\in\mathbb{R}^6\mid0\leq x_i\leq1,~x_1+\ldots+x_6=3\}.
\]
Indeed, the subdivision associated to $(123)$ is the splitting by the hyperplane $x_1+x_2+x_3=1$: see \cite[Section~4.2.4]{Lux08}. This matches the subdivision associated to $e_{456}$, which is the one induced by the height function with value $1$ on the vertex $(0,0,0,1,1,1)\in\Delta(3,6)$ and zero on the other vertices. For the other types of ray generators, Luxton's $(12)$ corresponds to Speyer--Sturmfels' $f_{3456}$, and lastly Luxton's $(12)(34)(56)$ is associated to $g_{125634}$.
\end{remark}

\subsection{The combinatorics of the fan \texorpdfstring{$\widehat{\Sigma}^{\mathrm{sd}}(3,6)$}{hatSigmasd(3,6)}}
Now we have all the ingredients to fully describe the fan $\widehat{\Sigma}^{\mathrm{sd}}(3,6)$, which gives us the tropical compactification $\overline{\mathbf{S}}_{\mathrm{GP}}(3,6)^\nu$. A complete understanding of the cones of this fan gives us a full view of the stratification of the boundary of $\overline{\mathbf{S}}_{\mathrm{GP}}(3,6)^\nu$ \cite{Tev07}.
\begin{proposition}\label{prop:sd_refined_fan}
The fan $\widehat{\Sigma}^{\mathrm{sd}}(3,6)$ has f-vector $(1,85,330,270)$. The number of cones of a given type can be found in Table~\ref{tab:correspondencekapranov'sstrataandcones_selfdual}.
\end{proposition}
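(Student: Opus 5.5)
The plan is to compute $\widehat{\Sigma}^{\mathrm{sd}}(3,6)$ explicitly as the refinement of $\Sigma^{\mathrm{sd}}(3,6)$ induced by the new rays of types C and D, and then count cones. The first step is to determine which of the new rays lie in $L^{\mathrm{sd}}$. Every $f_{i_1i_2i_3i_4}$ lies in $L^{\mathrm{sd}}$, since it is a ray of $\Sigma^{\mathrm{sd}}(3,6)$. Hence all $45$ rays $c$ (sums of two $f$'s) and all $15$ rays $d$ (sums of three $f$'s) lie in $L^{\mathrm{sd}}$. By contrast, $e$-rays and $g$-rays are not in $L^{\mathrm{sd}}$: only the combinations $e_I+e_{I^c}$ survive. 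I would verify this last point modulo $L$ by a short linear-algebra check, using that $g_{123456}-g_{125634}$ is not symmetric under $I\mapsto I^c$. This gives the ray count $20/2\cdot 2=$ \dots{} more precisely $10$ rays E$^{\mathrm{sd}}$, $15$ rays F, $45$ rays C and $15$ rays D, for a total of $10+15+45+15=85$ rays. One also has to check that no intersection $\sigma\cap\tau$ creates further rays. For this I would observe that a cone of $\Sigma(3,6)$ meets $L^{\mathrm{sd}}$ either in a face spanned by F's (inside FFFGG, where the G's pair up into D's) or in the cone $\langle e_I+e_{I^c},f,f\rangle$ inside EEFF(a).

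The second step is the local picture. Each FFF cone of $\Sigma^{\mathrm{sd}}(3,6)$ is the triangular ``equator'' of a bipyramid FFFGG. In $\widehat{\Sigma}(3,6)$ that bipyramid is cut by the three medians through the C's and the axis through D, as in Table~\ref{tab:refinementTropGr}. Intersecting with $L^{\mathrm{sd}}$ therefore subdivides the triangle FFF barycentrically into $6$ triangles of type CDF. Each E$^{\mathrm{sd}}$FF cone lies in a cone EEFF(a) whose FF edge is split by the corresponding C. Hence it splits into $2$ cones of type E$^{\mathrm{sd}}$CF. This yields
\[
15\cdot 6+90\cdot 2=270
\]
maximal cones.

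The third step counts the $2$-dimensional cones. Inside each FFF triangle the barycentric subdivision has $12$ interior or boundary edges, of types CF, CD and DF. Across the whole fan this gives $90$ edges CF, $45$ edges CD and $45$ edges DF, where the CF edges are counted once because bipyramids share only vertices (proof of Theorem~\ref{thm:f-vector-of-widehat-Sigma(3,6)}). Next come the edges E$^{\mathrm{sd}}$F, of which there are $10\cdot$(number of F's adjacent), together with the new E$^{\mathrm{sd}}$C edges, one for each E$^{\mathrm{sd}}$FF cone, giving $90$. I would get the E$^{\mathrm{sd}}$F count from the $105$ two-dimensional cones of $\Sigma^{\mathrm{sd}}(3,6)$: these are $45$ FF edges and $60$ E$^{\mathrm{sd}}$F edges. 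So the total is $60+90+45+45+90=330$.

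As a sanity check I would use the Euler characteristic of a $3$-dimensional pointed fan whose link is a $2$-sphere-like complex (here $\mathrm{trop}\,\mathrm{Gr}(2,6)$, of Euler characteristic $1-85+330-270$ consistent with the original $1-25+105-105$). The main obstacle I expect is the first step: rigorously showing that $L^{\mathrm{sd}}$ meets the cones of types EFFG, EEFF(b), EEFG, etc.\ only in faces generated by F/C/D rays, so that no extra rays or cones appear. I would handle this by writing each cone's generators modulo $L$ and testing membership of positive combinations in $L^{\mathrm{sd}}$ directly, using \cite[Theorem~6.5]{GHSV24} for the unrefined part. Finally, I would tabulate the types to fill Table~\ref{tab:correspondencekapranov'sstrataandcones_selfdual}.
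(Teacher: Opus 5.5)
Your proof is correct, but it takes the other of the two routes the paper names at the start of its proof. You subdivide the maximal cones of $\Sigma^{\mathrm{sd}}(3,6)$: each FFF equator barycentrically into six CDF cones, and each E$^{\mathrm{sd}}$FF cone into two CE$^{\mathrm{sd}}$F cones along $\langle e_I+e_{I^c},c\rangle$. You then read off the lower-dimensional counts from these local pictures plus the known $45$ FF and $60$ E$^{\mathrm{sd}}$F edges of $\Sigma^{\mathrm{sd}}(3,6)$. The paper instead runs through the cone types of $\widehat{\Sigma}(3,6)$ in Table~\ref{tab:correspondencekapranov'sstrataandcones_refinement} and intersects each with $L^{\mathrm{sd}}$. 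For example, EEEE and EEEG give $0$, CEFG and CEEF(b) give CF, CDFG gives CDF, and EE(a), EEF(a), CEE(a), CEEF(a) give E$^{\mathrm{sd}}$, E$^{\mathrm{sd}}$F, CE$^{\mathrm{sd}}$, CE$^{\mathrm{sd}}$F. Your route needs only the refinement inside FFFGG and EEFF(a), not the full cone census of $\widehat{\Sigma}(3,6)$. The paper's route records, for each cone of $\widehat{\Sigma}^{\mathrm{sd}}(3,6)$, the cone of $\widehat{\Sigma}(3,6)$ it comes from, and that is what fills the stratum column of Table~\ref{tab:correspondencekapranov'sstrataandcones_selfdual} in Proposition~\ref{prop:strataofselfdualcompactification}.

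The ``main obstacle'' you flag is not actually needed. Suppose $\tau=\sigma'\cap L^{\mathrm{sd}}$ with $\sigma'\in\Sigma(3,6)$, and $\hat\sigma\in\widehat{\Sigma}(3,6)$ lies in $\sigma''\in\Sigma(3,6)$. Then $\hat\sigma\cap\sigma'=\hat\sigma\cap(\sigma''\cap\sigma')$ is a face of $\hat\sigma$ contained in $\sigma'$, because $\sigma''\cap\sigma'$ is a face of $\sigma''\supseteq\hat\sigma$. So $\tau$ is subdivided only by cones of $\widehat{\Sigma}(3,6)$ inside $\sigma'$, and no stray rays can come from EFFG, EEFF(b), EEFG and the like.

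In your sanity check, the link of $\mathrm{trop}(\mathrm{Gr}(2,6))$ is a wedge of $24$ two-spheres, not a sphere. The check still works, since subdivision preserves the Euler characteristic ($-24$ in both cases).
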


\begin{proof}
To obtain the f-vector of $\widehat{\Sigma}^{\rm sd}(3,6)$,  we can either apply the refinement explained in Section~\ref{sec:refinedfan} to $\Sigma^{\rm sd}(3,6)$ or intersect $\widehat{\Sigma}(3,6)$ with $L^{\rm sd}$. We use the latter approach. Preliminarily, recall that the different types of cones of $\widehat{\Sigma}(3,6)$ and their count can be found in Table~\ref{tab:correspondencekapranov'sstrataandcones_refinement}.

The only rays in $\widehat{\Sigma}(3,6)$ that are contained in $L^{\mathrm{sd}}$ are the ones of type F, C, and D. Additionally, we obtain the rays of type E$^{\rm sd}$ (generated by $e_{i_1i_2i_3}+e_{i_4i_5i_6}$) by intersecting $L^{\rm sd}$ with the EE(a). So, there are $10$ rays of type E$^{\rm sd}$. In conclusion, the dimension one entry of the f-vector is given by
\[
15\,\text{\scriptsize{\rm{(F)}}\normalsize}+10\,\text{\scriptsize{\rm{(E$^{\rm sd}$)}}\normalsize}+45\,\text{\scriptsize{\rm{(C)}}\normalsize}+15\,\text{\scriptsize{\rm{(D)}}\normalsize}=85.
\]

For the $2$-dimensional cones, we obtain $90$ CE$^{\rm sd}$ cones from intersecting the CEE(a) cones with $L^{\rm sd}$ and $60$ E$^{\rm sd}$F cones from intersecting the EEF(a) cones with $L^{\rm sd}$. Only the $2$-dimensional cones CF, CD, and DF are contained in $L^{\rm sd}$. Moreover, no higher dimensional cone intersects with $L^{\rm sd}$ in a way that would give an additional $2$-dimensional cones of $\widehat{\Sigma}^{\mathrm{sd}}(3,6)$. So, in total these are
\[
90\,\text{\scriptsize{\rm{(CE$^{\rm sd}$)}}\normalsize}+60\,\text{\scriptsize{\rm{(E$^{\rm sd}$F)}}\normalsize}+90\,\text{\scriptsize{\rm{(CF)}}\normalsize}+45\,\text{\scriptsize{\rm{(CD)}}\normalsize}+45\,\text{\scriptsize{\rm{(DF)}}\normalsize}=330.
\]

All $3$-dimensional cones $\sigma\in\widehat{\Sigma}^{\mathrm{sd}}(3,6)$ arise as the intersection of a maximal cone of $\sigma\in\widehat{\Sigma}(3,6)$ with $L^{\mathrm{sd}}$. Let us go through all the possibilities. The cones of type EEEE and EEEG intersect $L^{\rm sd}$ only in zero.  Cones of type EEFG intersect $L^{\mathrm{sd}}$ in an F ray, while CEFG and CEEF(b) cones intersect it giving a CF cone. The cones of type CDFG intersect $L^{\mathrm{sd}}$ in the CDF cones, while the CEEF(a) cones give rise to the CE$^{\rm sd}$F cones. By the enumeration in Table~\ref{tab:correspondencekapranov'sstrataandcones_refinement}, we obtain $90\,\text{\scriptsize{\rm{(CDF)}}\normalsize}+180\,\text{\scriptsize{\rm{(CE$^{\rm sd}$F)}}\normalsize}=270$ cones of dimension three in $\widehat{\Sigma}^{\rm sd}(3,6)$.

All the types and numbers of the different cones of $\widehat{\Sigma}^{\rm sd}(3,6)$ are summarized in Table~\ref{tab:correspondencekapranov'sstrataandcones_selfdual}.
\end{proof}

\begin{definition}
Let us consider the inclusion
\[
\overline{\mathbf{S}}_{\mathrm{GP}}(3,6)^\nu\cong\widehat{\mathbf{S}}(3,6)\hookrightarrow\widehat{\mathbf{X}}(3,6)\cong\overline{\mathbf{X}}_{\mathrm{GP}}(3,6)^\nu.
\]
As the boundary $\overline{\mathbf{S}}_{\mathrm{GP}}(3,6)^\nu\setminus\mathbf{S}(3,6)$ is contained in the boundary of $\overline{\mathbf{X}}_{\mathrm{GP}}(3,6)^\nu$, we endow it with the stratification induced by the latter. More precisely, if $B\subseteq\overline{\mathbf{X}}_{\mathrm{GP}}(3,6)^{\nu}$ is a boundary stratum of a certain type (by this we mean closed, and generically parametrizing $6$-pointed Mustafin degeneration of the given type as indicated in \cite[Tables~13--16]{ST22}), then $B\cap\overline{\mathbf{S}}_{\mathrm{GP}}(3,6)^\nu$ is a boundary stratum of the same type as $B$, and all strata of $\overline{\mathbf{S}}_{\mathrm{GP}}(3,6)^\nu$ arise in this way. 
\end{definition}

\begin{proposition}
\label{prop:strataofselfdualcompactification}
The correspondence between the strata of $\overline{\mathbf{S}}_\mathrm{GP}(3,6)^\nu$ and the cones of $\widehat{\Sigma}^{\mathrm{sd}}(3,6)$ is illustrated in Table~\ref{tab:correspondencekapranov'sstrataandcones_selfdual}. In particular, if $(X;p_1,\ldots,p_6)$ is a $6$-pointed Mustafin degeneration of $\mathbb{P}^2$ parametrized by the boundary of $\overline{\mathbf{S}}_{\mathrm{GP}}(3,6)^\nu$, then it can only be of the types in Table~\ref{tab:correspondencekapranov'sstrataandcones_selfdual}.
\end{proposition}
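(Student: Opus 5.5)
The argument rests on the orbit–cone correspondence for tropical compactifications. By Theorem~\ref{thm:SGP36} and Proposition~\ref{prop:widehatS(3,6)-is-normal-and-tropical}, $\overline{\mathbf{S}}_{\mathrm{GP}}(3,6)^\nu\cong\widehat{\mathbf{S}}(3,6)$ is the closure of $\mathbf{S}(3,6)$ in $Y^H_{\widehat{\Sigma}^{\mathrm{sd}}(3,6)}$. The multiplication map is faithfully flat and the support of $\widehat{\Sigma}^{\mathrm{sd}}(3,6)$ equals $\mathrm{trop}(\mathbf{S}(3,6))$. So by \cite{Tev07} each cone $\tau\in\widehat{\Sigma}^{\mathrm{sd}}(3,6)$ gives a nonempty stratum $\widehat{\mathbf{S}}(3,6)\cap O^H_\tau$ of codimension $\dim\tau$, and all boundary strata arise this way. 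Likewise, the strata of $\widehat{\mathbf{X}}(3,6)\cong\overline{\mathbf{X}}_{\mathrm{GP}}(3,6)^\nu$ correspond to the cones of $\widehat{\Sigma}(3,6)$, as listed in Table~\ref{tab:correspondencekapranov'sstrataandcones_refinement}.

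Next I would relate the two stratifications through the toric inclusion $Y^H_{\widehat{\Sigma}^{\mathrm{sd}}}\hookrightarrow Y^T_{\widehat{\Sigma}}$. This inclusion comes from $L^{\mathrm{sd}}_{\mathbb{Z}}\subseteq N_T$, and every cone $\tau\in\widehat{\Sigma}^{\mathrm{sd}}(3,6)$ has the form $\sigma\cap L^{\mathrm{sd}}$ for some $\sigma\in\widehat{\Sigma}(3,6)$. I take $\sigma(\tau)$ to be the unique cone of $\widehat{\Sigma}(3,6)$ whose relative interior contains the relative interior of $\tau$. By standard toric geometry, the torus orbit $O^H_\tau$ maps into $O^T_{\sigma(\tau)}$. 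Hence the stratum of $\overline{\mathbf{S}}_{\mathrm{GP}}(3,6)^\nu$ attached to $\tau$ lies in the stratum of $\overline{\mathbf{X}}_{\mathrm{GP}}(3,6)^\nu$ attached to $\sigma(\tau)$. Its generic point therefore parametrizes a $6$-pointed Mustafin degeneration of the type of $\sigma(\tau)$, which matches the induced stratification in the definition preceding the statement. Equivalently, one can argue with arcs: an arc in $\mathbf{S}(3,6)$ whose tropicalization lies in the relative interior of $\tau$ limits, by Theorem~\ref{thm:main-on-XbarGP(3,n)}, to the Mustafin degeneration of type $\sigma(\tau)$.

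It remains to compute $\sigma(\tau)$ for each type of cone, reusing the case analysis from the proof of Proposition~\ref{prop:sd_refined_fan}.
\begin{itemize}
\item Rays: F, C and D lie in $L^{\mathrm{sd}}$, so they correspond to strata \#5, \#12 and \#20. The ray $e_I+e_{I^c}$ lies in the relative interior of EE(a), giving \#2.
\item Two-dimensional cones: CF, CD and DF give \#12.1, \#20.6 and \#20.2. CE$^{\mathrm{sd}}$ lies in the relative interior of CEE(a) (\#23), and E$^{\mathrm{sd}}$F lies in that of EEF(a) (\#10).
\item Three-dimensional cones: CDF gives \#20.3, and CE$^{\mathrm{sd}}$F lies in the relative interior of CEEF(a), giving \#23.1.
\end{itemize}
Collecting these, and multiplying by the counts, reproduces Table~\ref{tab:correspondencekapranov'sstrataandcones_selfdual}. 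The final claim then follows immediately: every boundary point of $\overline{\mathbf{S}}_{\mathrm{GP}}(3,6)^\nu$ lies in a stratum whose generic type is one of those listed. For points in smaller strata, it is the type of the smallest stratum containing them, which again corresponds to a cone of $\widehat{\Sigma}^{\mathrm{sd}}(3,6)$.

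The main obstacle is the relative-interior claims, for instance that $e_I+e_{I^c}$ is not contained in a proper face of EE(a). One must also check that no cone of $\widehat{\Sigma}^{\mathrm{sd}}(3,6)$ comes from a larger cone of $\widehat{\Sigma}(3,6)$ meeting $L^{\mathrm{sd}}$ non-transversally, which would change $\sigma(\tau)$. I would verify this by writing each intersection explicitly in the ray generators modulo $L$, as in the proof of Proposition~\ref{prop:sd_refined_fan}. The key input is that each cone of $\widehat{\Sigma}(3,6)$ is simplicial, so a point of $L^{\mathrm{sd}}$ has a unique positive expression in the generators.
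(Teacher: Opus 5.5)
Your proposal is correct and follows the paper's proof. Both arguments read off the type of each stratum of $\overline{\mathbf{S}}_{\mathrm{GP}}(3,6)^\nu$ from the cone of $\widehat{\Sigma}(3,6)$ whose intersection with $L^{\mathrm{sd}}$ yields the given cone of $\widehat{\Sigma}^{\mathrm{sd}}(3,6)$ (for instance, EE(a) gives E$^{\mathrm{sd}}$ and hence type \#2). Your choice of $\sigma(\tau)$ as the cone whose relative interior contains that of $\tau$, together with the orbit map $O^H_\tau\to O^T_{\sigma(\tau)}$, simply makes precise the step the paper states as ``boundary strata of $\overline{\mathbf{S}}_{\mathrm{GP}}(3,6)^\nu$ are restrictions of boundary strata of $\overline{\mathbf{X}}_{\mathrm{GP}}(3,6)^\nu$.''
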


\begin{proof}
Table~\ref{tab:correspondencekapranov'sstrataandcones_refinement} shows the correspondence between strata of $\overline{\mathbf{X}}_{\mathrm{GP}}(3,6)^\nu$ and $\widehat{\Sigma}(3,6)$  from \cite[Section~12]{ST22} explicitly. Every cone in $\widehat{\Sigma}^{\mathrm{sd}}(3,6)$ is the intersection of a cone in $\widehat{\Sigma}(3,6)$ with $L^\mathrm{sd}$. As boundary strata of $\overline{\mathbf{S}}_\mathrm{GP}(3,6)^\nu$ are restrictions of boundary strata of $\overline{\mathbf{X}}_\mathrm{GP}(3,6)^\nu$, we can fill out the first column of Table~\ref{tab:correspondencekapranov'sstrataandcones_selfdual} by using Table~\ref{tab:correspondencekapranov'sstrataandcones_refinement}. We illustrate this in two examples: A cone of type EE(a) of $\widehat{\Sigma}(3,6)$ corresponds to a stratum of type \#2 in $\overline{\mathbf{X}}_\mathrm{GP}(3,6)^\nu$. Intersecting such a cone with $L^{\rm sd}$ yields a ray of type $\mathrm{E}^{\rm sd}$ of $\widehat{\Sigma}^{\rm sd}(3,6)$. Thus, a ray of type $\mathrm{E}^{\rm sd}$ corresponds to a stratum of $\overline{\mathbf{S}}_\mathrm{GP}(3,6)^\nu$ of type \#2. If one looks at a stratum of $\overline{\mathbf{X}}_\mathrm{GP}(3,6)^\nu$ of type \#3, then it corresponds to a cone of type EE(b). Such a cone intersects $L^{\rm sd}$ only in zero, so there is no stratum of $\overline{\mathbf{S}}_\mathrm{GP}(3,6)^\nu$ of this type.
\end{proof}

\begin{table}[hbtp]
\centering
\caption{Correspondence between $\widehat{\Sigma}^{\mathrm{sd}}(3,6)$ and the strata of $\overline{\mathbf{S}}_{\mathrm{GP}}(3,6)^\nu$.}
\label{tab:correspondencekapranov'sstrataandcones_selfdual}
\begin{tabular}{|>{\centering\arraybackslash}m{2cm}|>{\centering\arraybackslash}m{2cm}|>{\centering\arraybackslash}m{8cm}|>{\centering\arraybackslash}m{2cm}|}
\hline
stratum of $\overline{\mathbf{S}}_{\mathrm{GP}}(3,6)^\nu$  & comb. type & generators of the cone & count\\ \hline
$\#2$ & ${\rm E}^{\rm sd} $ & $e_{123}+e_{456}$ & 10
\\
\hline
$\#5$ & F & $f_{1234}$ & 15
\\
\hline
$\#10$ & ${\rm E}^{\rm sd} $F  &$e_{234}+e_{156},\, f_{1234}$ & 60
\\
\hline
$\#12$ & C  &$c_{3456}$ & 45
\\
\hline
$\#12.1$ & CF  &
$c_{3456},\,f_{1234}$ & 90
\\
\hline
$\#20$ & D & $d_{3456}$ & 15
\\
\hline
$\#20.2$ & DF & 
$d_{3456},\,f_{1234} $ & 45
\\
\hline
$\#20.3$ & CDF & 
$c_{1256},\,d_{3456},\,f_{1234} $ & 90\\
\hline
$\#20.6$ & CD & 
$c_{1256},\,d_{3456} $ & 45
\\
\hline
$\#23$ & C${\rm E}^{\rm sd} $ & $c_{3456},\,e_{134}+e_{256}$
& 90
\\
\hline
$\#23.1$ & C${\rm E}^{\rm sd} $F  & $c_{3456},\,e_{134}+e_{256},\,f_{1256}$ 
& 180
\\
\hline
\end{tabular}
\end{table}

\section{One-parameter Mustafin degenerations and conics}
\label{sec:one-parameter-Mustafin-degens-and-conics}

If $(X;p_1,\ldots,p_6)$ is a $6$-pointed Mustafin degeneration parametrized by $\overline{\mathbf{S}}_{\mathrm{GP}}(3,6)^\nu$, then extra information is encoded in the marked points, as $(X;p_1,\ldots,p_6)$ is a flat degeneration of $\mathbb{P}^2$ with six marked points on a smooth conic. In this section, we study how the conic degenerates in one-parameter families.

We start by recalling some standard definitions from \cite{CHSW11}. Let $\Sigma$ be a finite set of lattices. An irreducible component of the central fiber of the Mustafin variety $\PP(\Sigma)_\Bbbk$ is called a \emph{primary component} if it maps birationally to the special fiber of the factor $\PP(L_i)$ for some $[L_i]\in\Sigma$. Otherwise it is called a \emph{secondary component}. There is a bijection between primary components of $\mathbb{P}(\Sigma)_\Bbbk$ and the lattice classes in $\Sigma$ \cite[Corollary~2.5]{CHSW11}. Next, we need a technical proposition that will be used in the proof of the main result of this section: Theorem~\ref{thm:1parameterfamilyconics}.

\begin{proposition}
\label{prop:forgetfulmorphism}
Let $\Sigma$ be a finite set of lattice classes. If $\Sigma'\subseteq\Sigma$, then there is a morphism $\rho\colon\mathbb{P}(\Sigma)\to \mathbb{P}(\Sigma')$ satisfying the following:
\begin{enumerate}

\item $\rho$ is an isomorphism in the complement of the central fibers;

\item If $X\subseteq\mathbb{P}(\Sigma)_\Bbbk$ is a primary component corresponding to a lattice class in $\Sigma\setminus\Sigma'$, then $X$ is either contracted by $\rho$ or mapped to a secondary component of $\mathbb{P}(\Sigma')_\Bbbk$.

\end{enumerate}
\end{proposition}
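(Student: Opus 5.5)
The morphism $\rho$ will be the restriction of a coordinate projection. By definition, $\mathbb{P}(\Sigma)$ is the closure of the diagonal image of $\mathbb{P}^2_K$ in $\prod_{[L]\in\Sigma}\mathbb{P}(L)$ (fibered over $R$), and similarly for $\mathbb{P}(\Sigma')$. The projection $\pi\colon\prod_{[L]\in\Sigma}\mathbb{P}(L)\to\prod_{[L]\in\Sigma'}\mathbb{P}(L)$ is proper and commutes with the diagonal embeddings of $\mathbb{P}^2_K$. Hence $\pi$ maps the closure $\mathbb{P}(\Sigma)$ into the closure $\mathbb{P}(\Sigma')$; in fact onto it, because the image is closed and contains $\mathbb{P}^2_K$. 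We set $\rho=\pi|_{\mathbb{P}(\Sigma)}$.

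For (1), the complement of the central fibers is the generic fiber over $\mathrm{Spec}(K)$. The remark after the definition of Mustafin varieties gives $\mathbb{P}(\Sigma)_K\cong\mathbb{P}^2_K\cong\mathbb{P}(\Sigma')_K$, both being the diagonal copy of $\mathbb{P}^2_K$. Over $K$ every factor $\mathbb{P}(L)_K$ is identified with $\mathbb{P}^2_K$, so $\pi$ restricts to the identity of $\mathbb{P}^2_K$. Thus $\rho$ is an isomorphism away from the central fibers.

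For (2), let $X\subseteq\mathbb{P}(\Sigma)_\Bbbk$ be the primary component attached to $[L]\in\Sigma\setminus\Sigma'$, so that $X$ maps birationally onto $\mathbb{P}(L)_\Bbbk\cong\mathbb{P}^2_\Bbbk$ under the projection $p_L$. Since $\rho$ is proper, $\rho(X)$ is an irreducible closed subset of $\mathbb{P}(\Sigma')_\Bbbk$, and $\dim\rho(X)\le 2$. If $\dim\rho(X)<2$, then $X$ is contracted and we are done. Otherwise $\rho(X)$ is an irreducible component $Y$ of $\mathbb{P}(\Sigma')_\Bbbk$, since the central fiber is pure of dimension $2$ by flatness. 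We must rule out $Y$ being primary, i.e.\ that $Y$ maps birationally to $\mathbb{P}(L')_\Bbbk$ for some $[L']\in\Sigma'$. Suppose it did. Then $p_{L'}|_X$ factors through $X\to Y\to\mathbb{P}(L')_\Bbbk$ and is dominant. I would then show that $X$ would also be the primary component attached to $[L']$ in $\mathbb{P}(\Sigma)_\Bbbk$, because a component dominating $\mathbb{P}(L')_\Bbbk$ is the (unique) primary one for $L'$. This would contradict the bijection between primary components and lattice classes \cite[Corollary~2.5]{CHSW11}, since $[L]\neq[L']$.

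The hard step is this last one. Dominance of $X\to\mathbb{P}(L')_\Bbbk$ does not by itself give birationality, and we need that the unique component of $\mathbb{P}(\Sigma)_\Bbbk$ dominating $\mathbb{P}(L')_\Bbbk$ is the primary one. I would argue this first in the case $\Sigma=\{[L],[L']\}$. The join $\mathbb{P}(\{L,L'\})$ is the closure of the graph of the birational map $\mathbb{P}(L)\dashrightarrow\mathbb{P}(L')$, i.e.\ of the induced linear map on central fibers of a chosen pair of representatives. So exactly one component dominates each factor, birationally, and for $[L]\neq[L']$ these two components are distinct. In general, the component $X$ projects to a component of $\mathbb{P}(\{L,L'\})_\Bbbk$ dominating both $\mathbb{P}(L)_\Bbbk$ and $\mathbb{P}(L')_\Bbbk$. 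No such component exists, which gives the contradiction. I would check this uniqueness using the explicit description of two-lattice Mustafin varieties in \cite{CHSW11}, as in Example~\ref{ex:Mustafin-variety-example}.
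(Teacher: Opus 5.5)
Your proposal is correct and follows the paper's proof. In both, $\rho$ is the restriction of the projection onto the factors indexed by $\Sigma'$, part (1) is read off on generic fibers, and part (2) is proved by contradicting the bijection between primary components and lattice classes.

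The one difference is that you justify a step the paper takes for granted, namely that $\mathrm{pr}_i(X)=\mathbb{P}(L_i)_\Bbbk$ makes $X$ the primary component for $[L_i]$. Your two-lattice reduction handles this correctly. The step also follows directly: $\mathbb{P}(\Sigma)\to\mathbb{P}(L_i)\cong\mathbb{P}^2_R$ is proper birational onto a regular scheme, hence an isomorphism near the generic point of $\mathbb{P}(L_i)_\Bbbk$, so exactly one component of $\mathbb{P}(\Sigma)_\Bbbk$ dominates $\mathbb{P}(L_i)_\Bbbk$, and it does so birationally.
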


\begin{proof}
Let us write $\Sigma = \{[L_1],\ldots,[L_r]\}$ and $\Sigma'= \{[L_1],\ldots,[L_m]\}$ with $r\geq m$.
We have the following commutative diagram:
\begin{center}
\begin{tikzpicture}[>=angle 90]
\matrix(a)[matrix of math nodes,
row sep=2em, column sep=4em,
text height=2ex, text depth=1ex]
{\mathbb{P}(\Sigma)& \mathbb{P}(\Sigma')\\
\mathbb{P}(L_1)\times_R\ldots\times_R\mathbb{P}(L_r) &\mathbb{P}(L_1)\times_R\ldots\times_R\mathbb{P}(L_m)\\
&\mathbb{P}(L_i),\\};
\path[->] (a-1-1) edge node[above]{$\rho$}(a-1-2);
\path[right hook->] (a-1-1) edge node[]{}(a-2-1);
\path[right hook->] (a-1-2) edge node[]{}(a-2-2);
\path[->] (a-2-1) edge node[above]{$\pi$}(a-2-2);
\path[->] (a-2-1) edge node[below left]{$\mathrm{pr}_i$}(a-3-2);
\path[->] (a-2-2) edge node[right]{$\mathrm{pr}_i'$}(a-3-2);
\end{tikzpicture}
\end{center}
where $\mathrm{pr}_i,\mathrm{pr}_i'$ are the projections on the $i$-th factor, $i\in\{1,\ldots,m\}$, and $\rho\colon\mathbb{P}(\Sigma)\rightarrow\mathbb{P}(\Sigma')$ is the restriction of the projection $\pi$ on the first $m$ factors.

Now, let us consider a primary component $X\subseteq\PP(\Sigma)_\Bbbk$ corresponding to a lattice class $[L] \in\Sigma\setminus\Sigma'$ and suppose by contradiction that $\rho(X)\subseteq\mathbb{P}(\Sigma')_\Bbbk$ is a primary component. Then, there exists $i\in\{1,\ldots,m\}$ such that $\mathrm{pr}_i'(\rho(X))=\mathbb{P}(L_i)_\Bbbk$. Therefore, we have that
\[
\mathrm{pr}_i(X)=\mathrm{pr}_i'(\pi(X))=\mathrm{pr}_i'(\rho(X))=\mathbb{P}(L_i)_\Bbbk.
\]
Hence, $X$ is the primary component of $\mathbb{P}(\Sigma)_\Bbbk$ corresponding to a lattice class $[L']\in\Sigma'$. As $[L]\neq[L']$ and both correspond to $X$, this contradicts the bijection between $\Sigma$ and the primary components of $\mathbb{P}(\Sigma)_\Bbbk$. In conclusion, either $\rho$ contracts $X$, or $\rho(X)$ is a secondary component of $\mathbb{P}(\Sigma')_\Bbbk$, proving~(2).

To prove (1) we can consider, say, $\mathrm{pr}_1$ and $\mathrm{pr}_1'$ restricted to $\mathbb{P}(\Sigma)_K$ and $\mathbb{P}(\Sigma')_K$ and observe that they give isomorphisms with $\mathbb{P}(L_1)_K$. Hence, also $\rho$ is an isomorphism away from the central fibers.
\end{proof}

\begin{definition}
We call $\rho$ as in Proposition~\ref{prop:forgetfulmorphism} a \emph{forgetful morphism}, as it ``forgets'' the lattice classes in $\Sigma\setminus\Sigma'$. We will be particularly interested in the following setting. Let $\mathbf{a}=(a_1,\ldots,a_6)\colon\mathrm{Spec}(K)\rightarrow\mathbf{B}(3,6)$ be an arc and let $i\in\{1,\ldots,6\}$. Let $\pi_i(\mathbf{a})=(a_1,\ldots,a_{i-1},a_{i+1},\ldots,a_6)$. Then, $\Sigma_{\pi_i(\mathbf{a})}\subseteq\Sigma_{\mathbf{a}}$. We denote by $\rho_i$ the forgetful morphism $\mathbb{P}(\Sigma_{\mathbf{a}})\rightarrow\mathbb{P}(\Sigma_{\pi_i(\mathbf{a})})$.
\end{definition}

\begin{remark}
There are examples where a primary component $X$ of $\PP(\Sigma)_\Bbbk$ is contracted under a forgetful morphism (see $\rho_6$ as depicted in Figure~\ref{fig:conic-in-case-2}), as well as examples where $X$ is mapped to a secondary component (see $\rho_2$ as depicted in Figure~\ref{fig:conic-in-case-12}).
\end{remark}

\begin{remark}
In all the cases we consider in this section, the central fibers of the Mustafin varieties $\PP(\Sigma_{\pi_i(\mathbf{a})})$ are constructed using the method described in \cite[Remark~5.3]{ST22}, which follows \cite[Section~4]{CHSW11}.
\end{remark}

\begin{theorem}
\label{thm:1parameterfamilyconics}
Let $x\in\overline{\mathbf{S}}_{\mathrm{GP}}(3,6)\setminus\mathbf{S}(3,6)$ be a point and let $(X;p_1,\ldots,p_6)$ be the $6$-pointed Mustafin degeneration of $\mathbb{P}^2$ parametrized by $x$. Let $\mathbf{a}\colon\mathrm{Spec}(K)\rightarrow\mathbf{S}(3,6)$ be an arc such that the unique extension $\overline{\mathbf{a}}\colon\mathrm{Spec}(R)\rightarrow\overline{\mathbf{S}}_{\mathrm{GP}}(3,6)$ satisfies $\overline{\mathbf{a}}(0)=x$. In particular,
\[
(\mathbb{P}(\Sigma_\mathbf{a})_\Bbbk;\overline{a}_1(0),\ldots,\overline{a}_6(0))=(X;p_1,\ldots,p_6).
\]
Let $\mathcal{C}\subseteq\PP(\Sigma_{\mathbf{a}})_K$ be the one-parameter family of smooth conics such that for any $t\neq0$, the six points $\mathbf{a}(t)$ lie on $\mathcal{C}_t$. Define $\overline{\mathcal{C}}$ as the closure of $\mathcal{C}\subseteq\PP(\Sigma_{\mathbf{a}})$. Then,
\begin{enumerate}
    \item the central fiber $\overline{\mathcal{C}}_0\subseteq\PP(\Sigma_{\mathbf{a}})_\Bbbk$ is a reduced curve containing the points $\overline{\mathbf{a}}(0)$, 
    \item the closure $\overline{\mathcal{C}}$ is flat over $\mathrm{Spec}(R)$, and
    \item the isomorphism class of the tuple $(\mathbb{P}(\Sigma_{\mathbf{a}})_\Bbbk;\overline{\mathcal{C}}_0;\overline{a}_1(0),\ldots,\overline{a}_6(0))$ is independent from the choice of $\mathbf{a}$ and only depends on $x$.
\end{enumerate}
\end{theorem}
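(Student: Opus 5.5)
The proof will go stratum by stratum, using Proposition~\ref{prop:strataofselfdualcompactification}. Since $x$ lies on the boundary, $(X;p_1,\ldots,p_6)$ is one of the finitely many types in Table~\ref{tab:correspondencekapranov'sstrataandcones_selfdual}: \#2, \#5, \#10, \#12, \#12.1, \#20, \#20.2, \#20.3, \#20.6, \#23, \#23.1. For each type I will fix a representative arc $\mathbf{a}$ in a normal form adapted to the cone. The tropicalization $\mathrm{val}(\mathbf{a})$ must lie in the relative interior of the corresponding cone of $\widehat{\Sigma}^{\mathrm{sd}}(3,6)$. Using the Pachter--Speyer parametrization from Proposition~\ref{prop:widehatS(3,6)-is-normal-and-tropical}, it is convenient to write $\mathbf{a}$ as six points $\gamma(u_1),\ldots,\gamma(u_6)$ on the fixed conic $\gamma\colon\mathbb{P}^1\to\mathbb{P}^2$, $u\mapsto[1:u:u^2]$. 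Here $u_i\in K$ is a general arc with prescribed valuations of the differences $u_i-u_j$, i.e. an arc in $\mathrm{M}_{0,6}$ whose tropical type is dictated by the isomorphism $\Sigma^{\mathrm{sd}}(3,6)\cong\mathrm{trop}(\mathrm{Gr}(2,6))$. With this choice the conic $\mathcal{C}$ is the constant conic $V(x_0x_2-x_1^2)$ for all $t\neq0$.

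For such an arc the stable lattices $\Sigma_\mathbf{a}$ are read off from \cite[Tables~13--16]{ST22}. I will compute the closure of $\mathcal{C}$ in each $\mathbb{P}(L)$, $[L]\in\Sigma_\mathbf{a}$, by substituting the change of basis: if $L$ has basis $t^{\alpha_i}e_i$, the equation becomes $t^{\alpha_0+\alpha_2}x_0x_2-t^{2\alpha_1}x_1^2$, rescaled by the lowest power of $t$. This gives the limit curves on the primary components. For the secondary components I will use the forgetful morphisms $\rho_i$ of Proposition~\ref{prop:forgetfulmorphism}. Each $\rho_i$ maps $\mathbb{P}(\Sigma_\mathbf{a})$ onto $\mathbb{P}(\Sigma_{\pi_i(\mathbf{a})})$, a Mustafin degeneration for five points. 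There the closure of the conic is known by Propositions~\ref{prop:degenerate-5-pointed-conics-1-strata} and \ref{prop:degenerate-5-pointed-conics-0-strata}, because the limit of a family of conics through five points is determined by them. Intersecting the preimages under the various $\rho_i$ will pin down $\overline{\mathcal{C}}_0$ on every component, including secondary ones and components contracted by some $\rho_i$. I then check two things: the result is a reduced connected curve through the six limit points, and it has arithmetic genus $0$ with the six points distributed so that $(\overline{\mathcal{C}}_0;p_i)$ is a stable rational curve. Part (1) follows.

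For (2), $\overline{\mathcal{C}}$ is the closure of its generic fiber, so it has no $t$-torsion. Since $R$ is a DVR, it is therefore automatically flat over $\mathrm{Spec}(R)$; no computation is needed. The content lies in (1): reducedness requires showing that the set-theoretic limit computed above is in fact the full flat limit. I will do this by comparing Hilbert polynomials with respect to the ample bundle $\boxtimes\mathcal{O}(1)$ on $\prod\mathbb{P}(L_i)$. The generic fiber has a computable degree, and the reduced candidate curve must have the same degree, so no embedded or multiple structure can appear. Alternatively, I will check on each component, using the explicit local equation, that the multiplicity of the limit is one. I expect this verification, especially on secondary components of types \#20.x where several forgetful maps contract things, to be the main obstacle.

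For (3), note that $\overline{\mathcal{C}}_0\subseteq X$ was described intrinsically in (1): on each component it is the unique curve in a specified linear system through specified marked points and the specified intersection points with adjacent components. This uniqueness comes from the five-point cases and the count of conditions. Hence the tuple depends only on $(X;p_1,\ldots,p_6)$, i.e. on $x$, and not on the arc. To cover arbitrary arcs rather than my normal forms, I will argue that any arc in $\mathbf{S}(3,6)$ can be moved by $\mathrm{SL}_3(K)$ to an arc of the form $\gamma(u_i)$ without changing $\Sigma_\mathbf{a}$ up to the induced isomorphism. The intrinsic characterization then applies verbatim.
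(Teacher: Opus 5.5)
Your plan works, and its backbone is the paper's: a case analysis over the strata allowed by Proposition~\ref{prop:strataofselfdualcompactification}. In both, $\overline{\mathcal{C}}_0$ is pinned down component by component via the forgetful morphisms $\rho_i$ and the five-point limits of Propositions~\ref{prop:degenerate-5-pointed-conics-1-strata} and~\ref{prop:degenerate-5-pointed-conics-0-strata}. Item (3) follows because the reconstruction uses only data determined by $x$. The paper phrases this as: $\overline{\rho_i(\mathcal{C})}_0$ depends only on $f_i(x)$, which is cleaner than your count of conditions.

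Two of your deviations are improvements. For (2) you are right that $\overline{\mathcal{C}}$, as the closure of its generic fiber, is $t$-torsion free and hence flat over the DVR $R$. The paper instead deduces flatness from reducedness via [HKT09, Lemma~10.12], which is unnecessary. Second, a Hilbert-polynomial comparison gives scheme-theoretic reducedness at points such as the quadruple point $r$ in type~\#2. There no $\rho_i$ is a local isomorphism, and the paper's conclusion $\overline{\mathcal{C}}_0|_T=\overline{\mathcal{C}}_0|_Q=\{r\}$ is only set-theoretic. However, comparing degrees (or checking multiplicity one along components) is not enough. It only shows that the ideal $\mathcal{I}$ of $C_{\mathrm{red}}$ in $\overline{\mathcal{C}}_0$ has finite support. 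Since $\chi(\mathcal{O}_{\overline{\mathcal{C}}_0})=1$ and $C_{\mathrm{red}}$ is connected, $\mathrm{length}(\mathcal{I})=p_a(C_{\mathrm{red}})$. So you must show $p_a(C_{\mathrm{red}})=0$, i.e.\ that the branches through $r$ and the other meeting points have independent tangent directions. This can be read off from the nodal five-point limits through the $\rho_i$, but it has to be done.

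The explicit-coordinates layer needs repair and cannot replace the $\rho_i$.

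First, stable lattices are in general not diagonal in coordinates adapted to the conic, so your substitution formula does not apply. In type~\#20, let the pairs $\{1,2\},\{3,4\},\{5,6\}$ cluster at $u=0,1,\infty$ respectively, say $u_1=0$, $u_2=t$, $u_3=1$, $u_4=1+t$. Then the lattice stabilizing $1234$ is the $R$-span of $(1,0,0)$, $(0,1,1)$, $(0,0,t)$. It contains $(0,1,1)$ but not $(0,1,0)$, so no rescaling of it is diagonal. You need an honest change of basis for each $L$.

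Second, the limit of $\mathcal{C}$ in $\mathbb{P}(L)_\Bbbk$ is only the image of $\overline{\mathcal{C}}_0$. It bounds $\overline{\mathcal{C}}_0\cap X_L$ from above without computing it. Take type~\#2 with $u=(0,t,2t,\infty,t^{-1},(2t)^{-1})$. The component $T\cong\mathbb{P}^2$ is primary for the span of $(1,0,0)$, $(0,t,0)$, $(0,0,1)$. There the conic degenerates to the pair of lines $x_0x_2=0$, i.e.\ the two double curves $T\cap F_1$ and $T\cap F_2$, whereas $\overline{\mathcal{C}}_0\cap T=\{r\}$. So ``this gives the limit curves on the primary components'' is false. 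Settling such cases is exactly what the paper's $\rho_i$-arguments do.

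Third, arcs $\gamma(u_i)$ with general $u_i$ and one prescribed valuation pattern do not exhaust the arcs with limit $x$. Even after moving the conic to $V(x_0x_2-x_1^2)$, $\mathrm{val}(\mathbf{a})$ can be any lattice point in the relative interior of a cone of $\widehat{\Sigma}^{\mathrm{sd}}(3,6)$ of dimension up to $3$. The leading coefficients also need not be general. So the load-bearing argument for (1) and (3) has to be the arc-independent one through the $\rho_i$, as in the paper, with coordinates serving only as a check.
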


\begin{proof}
By continuity, the limiting markings $(p_1,\ldots,p_6)=(\overline{a}_1(0),\ldots,\overline{a}_6(0))$ are contained in $\overline{\mathcal{C}}_0$. So, for (1) the main thing to prove is that $\overline{\mathcal{C}}_0$ is reduced and of dimension $1$.

The general strategy is as follows. For each $i\in\{1,\ldots,6\}$, let $\pi_i(\mathbf{a})=(a_1,\ldots,\widehat{a}_i,\ldots,a_6)$. Consider the morphism $\rho_i\colon\mathbb{P}(\Sigma_{\mathbf{a}})\rightarrow\mathbb{P}(\Sigma_{\pi_i(\mathbf{a})})$ guaranteed by Proposition~\ref{prop:forgetfulmorphism}. Since $\rho_i$ is proper and continuous, we have that
\[
\rho_i\left(\overline{\mathcal{C}}\right)=\overline{\rho_i(\mathcal{C})}.
\]
Now, the central fiber $\overline{\rho_i(\mathcal{C})}_0$ of $\overline{\rho_i(\mathcal{C})}$ in $\mathbb{P}(\Sigma_{\pi_i(\mathbf{a})})$ can be described via Propositions~\ref{prop:degenerate-5-pointed-conics-1-strata} and~\ref{prop:degenerate-5-pointed-conics-0-strata}. We show that $\overline{\mathcal{C}}_0\subseteq\mathbb{P}(\Sigma_{\mathbf{a}})$ can be reconstructed from the different $\overline{\rho_i(\mathcal{C})}_0$. We do this by considering all possible cases for $(X;p_1,\ldots,p_6)$, which by Proposition~\ref{prop:strataofselfdualcompactification} can only be as in Table~\ref{tab:correspondencekapranov'sstrataandcones_selfdual}. (The order in which we consider the types reflects the amount of work needed to understand $\overline{\mathcal{C}}_0$, starting from the simpler cases.)

\begin{figure}[h]
    \centering
    \begin{subfigure}{.3\textwidth}
            \begin{tikzpicture}[scale=0.5]

	\draw[line width=1pt] (0,6) -- (6,6);
	\draw[line width=1pt] (6,6) -- (6,0);
	\draw[line width=1pt] (6,0) -- (0,6);
	\draw[line width=1pt] (3,6) -- (6,3);
	\fill[gray!50,nearly transparent] (0,6) -- (6,6) -- (6,0) -- cycle;
	\fill (6,0) circle (5pt);
	\fill (6,6) circle (5pt);
	\fill (0,6) circle (5pt);
	\fill (2,5) circle (5pt);
	\fill (5.5,5) circle (5pt);
	\fill (4,3.5) circle (5pt);

	\node at (-.5,6) {$1$};
	\node at (2.5,5) {$2$};
	\node at (3.5,3.5) {$3$};
	\node at (6.5,0) {$4$};
	\node at (6.5,6) {$5$};
	\node at (5.5,4.4) {$6$};

	\node at (1,2) {\#5};

\end{tikzpicture}
\caption{$(\mathbb{P}(\Sigma_\mathbf{a})_\Bbbk;\overline{a}_1(0),\ldots,\overline{a}_6(0))$.}\label{fig:5}
    \end{subfigure}
    \begin{subfigure}{.3\textwidth}\centering
            \begin{tikzpicture}[scale=0.5]

	\draw[line width=1pt] (0,6) -- (6,6);
	\draw[line width=1pt] (6,6) -- (6,0);
	\draw[line width=1pt] (6,0) -- (0,6);
	\draw[line width=1pt] (3,6) -- (6,3);
	\fill[gray!50,nearly transparent] (0,6) -- (6,6) -- (6,0) -- cycle;
	\fill (6,6) circle (5pt);
	\fill (0,6) circle (5pt);
	\fill (2,5) circle (5pt);
	\fill (5.5,5) circle (5pt);
	\fill (4,3.5) circle (5pt);

	\node at (-.5,6) {$1$};
	\node at (2.5,5) {$2$};
	\node at (3.5,3.5) {$3$};
	\node at (6.5,-0.25) {};
	\node at (6.5,6) {$5$};
	\node at (5.5,4.4) {$6$};

	\node at (1,2) {};

\end{tikzpicture}
\caption{$\rho_4(\mathbb{P}(\Sigma_{\mathbf{a}})_\Bbbk)=\mathbb{P}(\Sigma_{\pi_4(\mathbf{a})})_\Bbbk$.}
    \end{subfigure}
   \begin{subfigure}{.3\textwidth}\centering
\begin{tikzpicture}[scale=0.5]
\draw [line width=1.pt] (0.,0.)-- (5.,3.);
\draw [line width=1.pt,color=lightgray] (3.,3.)-- (8.,0.);
\draw (3.5,1.4) node {$1$};
\draw (2.7,1.0) node {$2$};
\draw (1.9,0.5) node {$3$};
\draw (1.0,0.1) node {$4$};
\draw (5,1.1) node {$5$};
\draw (6.3,0.4) node {$6$};

\draw (0,-2) node {};

\draw [fill=black] (3.2,1.92) circle (4.0pt);
\draw [fill=black] (5.333333333333331,1.6) circle (4.0pt);
\draw [fill=black] (2.4,1.44) circle (4.0pt);
\draw [fill=black] (1.6,0.96) circle (4.0pt);
\draw [fill=black] (6.666666666666666,0.8) circle (4.0pt);
\draw [fill=black] (0.8,0.48) circle (4.0pt);

\end{tikzpicture}
\caption{$(\overline{\mathcal{C}}_0;p_1,\ldots,p_6)$.}\label{fig:curve-case-5}
   \end{subfigure}

    \caption{Computation of the limit curve $\overline{\mathcal{C}}_0$ in type \#5.}
    \label{fig:conic-in-case-5}
\end{figure}

\textbf{Type \#5.} We have that $(\mathbb{P}(\Sigma_\mathbf{a})_\Bbbk;\overline{a}_1(0),\ldots,\overline{a}_6(0))$ is as in Figure~\ref{fig:5} (up to a permutation of the labels). If we let $i=4$, then $\Sigma_{\mathbf{a}}=\Sigma_{\pi_4(\mathbf{a})}$. Therefore, $\rho_4$ is the identity, and hence $\overline{\mathcal{C}}_0=\overline{\rho_4(\mathcal{C})}_0$. We find that $\overline{\rho_4(\mathcal{C})}_0$ is described in Proposition~\ref{prop:degenerate-5-pointed-conics-1-strata}, and it is a reduced curve. This allows us to conclude the same for $\overline{\mathcal{C}}_0$. We summarize these considerations in Figure~\ref{fig:conic-in-case-5}.

\begin{figure}[h]
    \centering
    \begin{subfigure}{.3\textwidth}
            \begin{tikzpicture}[scale=0.5]

	\draw[line width=1pt] (0,6) -- (6,6);
	\draw[line width=1pt] (6,6) -- (6,0);
	\draw[line width=1pt] (6,0) -- (0,6);
	\draw[line width=1pt] (12/4,6) -- (12/4,6-6/4);
	\draw[line width=1pt] (12/4,6-6/4) -- (18/4,6-12/4);
	\draw[line width=1pt] (18/4,6-12/4) -- (6,6-12/4);
	\draw[line width=1pt] (6-6/4,6-12/4) -- (6-6/4,6/4);
	\draw[line width=1pt] (6-12/4,6-6/4) -- (6/4,6-6/4);
	\fill[gray!50,nearly transparent] (0,6) -- (6,6) -- (6,0) -- cycle;
	\fill (6,0) circle (5pt);
	\fill (0,6) circle (5pt);
	\fill (6,6) circle (5pt);
	\fill (2,5.2) circle (5pt);
	\fill (5,5) circle (5pt);
	\fill (5.2,2) circle (5pt);

	\node at (6.5,6) {$1$};
	\node at (4.5,4.5) {$2$};
	\node at (-.5,6) {$3$};
	\node at (2.5,5.5) {$4$};
	\node at (6.5,0) {$5$};
	\node at (5.5,2.5) {$6$};
\node at (3.45,3.45) {$R$};
	\node at (1,2) {\#12};

\end{tikzpicture}
\caption{$(\mathbb{P}(\Sigma_\mathbf{a})_\Bbbk;\overline{a}_1(0),\ldots,\overline{a}_6(0))$.}\label{fig:12}
    \end{subfigure}
    \begin{subfigure}{.3\textwidth}
            \begin{tikzpicture}[scale=0.5]

	\draw[line width=1pt] (0,6) -- (6,6);
	\draw[line width=1pt] (6,0) -- (6,6);
	\draw[line width=1pt] (0,6) -- (6,0);
	\draw[line width=1pt] (4,6) -- (4,2);
	\draw[line width=1pt] (6,4) -- (2,4);
    \fill[gray!50,nearly transparent] (0,6) -- (4,6) -- (4,4) -- (2,4) -- cycle;
    \fill[gray!50,nearly transparent] (6,0) -- (6,4) -- (4,4) -- (4,2) -- cycle;
    \fill[gray!50,nearly transparent] (2,4) -- (4,4) -- (4,2) -- cycle;
	\fill (0,6) circle (5pt);
	\fill (2.5,5) circle (5pt);
	\fill (6,0) circle (5pt);
	\fill (5,2.5) circle (5pt);
	\fill (6,6) circle (5pt);

	\node at (0,6.7) {$3$};
	\node at (3,5) {$4$};
	\node at (6.7,0) {$5$};
	\node at (5.1,3.2) {$6$};
	\node at (6.7,6.7) {$1$};
	\node at (4.5,4.5) {$r$};
\draw[line width=1pt] (4,4) circle (0.2cm);
	\node at (1,2) {};

\end{tikzpicture}
\caption{$\rho_2(\mathbb{P}(\Sigma_{\mathbf{a}})_\Bbbk)=\mathbb{P}(\Sigma_{\pi_2(\mathbf{a})})_\Bbbk$.}\label{fig:conic-in-case-12-forget2}
    \end{subfigure}
   \begin{subfigure}{.3\textwidth}

\begin{tikzpicture}[scale=0.5]
\draw (3.9,-5.6) node {$4$};
\draw (6.7,-3.7) node {$1$};
\draw (8.4,-3.7) node {$2$};
\draw (4.8,-4.5) node {$3$};
\draw (10.1,-4.5) node {$5$};
\draw (10.9,-5.6) node {$6$};

\draw (10.9,-7.6) node {};

\draw [line width=1.pt,color=lightgray] (3.,-6.)-- (6.,-2.);
\draw [line width=1.pt,color=lightgray] (9.,-2.)-- (12.,-6.);
\draw [line width=1.pt] (4.,-3.)-- (11.,-3.);

\draw [fill=black] (6.625,-3.) circle (4.0pt);
\draw [fill=black] (8.375,-3.) circle (4.0pt);
\draw [fill=black] (4.4904,-4.0128) circle (4.0pt);
\draw [fill=black] (3.7536,-4.9952) circle (4.0pt);
\draw [fill=black] (10.5168,-4.0224) circle (4.0pt);
\draw [fill=black] (11.244,-4.992) circle (4.0pt);

\end{tikzpicture}
\caption{$(\overline{\mathcal{C}}_0;p_1,\ldots,p_6)$.}\label{fig:curve-case-12}
   \end{subfigure}

    \caption{Computation of the limit curve $\overline{\mathcal{C}}_0$ in type \#12.}
    \label{fig:conic-in-case-12}
\end{figure}

\textbf{Type \#12.} We have that $(\mathbb{P}(\Sigma_\mathbf{a})_\Bbbk;\overline{a}_1(0),\ldots,\overline{a}_6(0))$ is as in Figure~\ref{fig:12}. If we let $i=2$, then $\Sigma_{\pi_2(\mathbf{a})}=\Sigma_{\mathbf{a}}\setminus\{[L]\}$, where $L$ is the lattice stabilizing the quadruples
\[
1235,~1236,~1245,~1246,
\]
 which corresponds to the pentagon in Figure \ref{fig:12}. Hence, $\mathbb{P}(\Sigma_{\pi_2(\mathbf{a})})_\Bbbk$ is of type II as in Figure~\ref{fig:degenerations-for-5-points}, also shown in Figure~\ref{fig:conic-in-case-12-forget2}. We observe that no component of $\mathbb{P}(\Sigma_\mathbf{a})_\Bbbk$ is contracted under $\rho_2$ and the component $P\subseteq\mathbb{P}(\Sigma_{\mathbf{a}})_\Bbbk$ containing $p_1$ and $p_2$, which is the blow-up of $\PP^2$ in two points, becomes the secondary component  $Q\subseteq\mathbb{P}(\Sigma_{\pi_2(\mathbf{a})})_\Bbbk$ isomorphic to $\PP^1\times\PP^1$. The birational morphism $P\rightarrow Q$ contracts to a point $r$ a curve, which we denote by $R$ in Figure~\ref{fig:12}. We understand the curve $\overline{\rho_2(\mathcal{C})}_0$ via Proposition~\ref{prop:degenerate-5-pointed-conics-0-strata} and observe that it does not pass through the point $r$. Hence, we have that
\[
\overline{\mathcal{C}}_0=\rho_2^{-1}(\overline{\rho_2(\mathcal{C})}_0)
\]
is a reduced curve. The above considerations are summarized in Figure~\ref{fig:conic-in-case-12}.

\begin{figure}[h]
    \centering
    \begin{subfigure}{.3\textwidth}
            \begin{tikzpicture}[scale=0.5]

	\draw[line width=1pt] (0,6) -- (6,6);
	\draw[line width=1pt] (6,6) -- (6,0);
	\draw[line width=1pt] (6,0) -- (0,6);
	\draw[line width=1pt] (2,6) -- (3,5);
	\draw[line width=1pt] (3,5) -- (4,5);
	\draw[line width=1pt] (4,5) -- (4,6);
	\draw[line width=1pt] (4,5) -- (5,4);
	\draw[line width=1pt] (5,4) -- (6,4);
	\draw[line width=1pt] (5,4) -- (5,3);
	\draw[line width=1pt] (5,3) -- (6,2);
	\draw[line width=1pt] (5,3) -- (4,3);
	\draw[line width=1pt] (4,3) -- (4,2);
	\draw[line width=1pt] (4,3) -- (3,4);
	\draw[line width=1pt] (3,4) -- (3,5);
	\draw[line width=1pt] (3,4) -- (2,4);
	\fill[gray!50,nearly transparent] (2,6) -- (4,6) -- (4,5) -- (3,5) -- cycle;
	\fill[gray!50,nearly transparent] (6,2) -- (6,4) -- (5,4) -- (5,3) -- cycle;
	\fill[gray!50,nearly transparent] (2,4) -- (3,4) -- (4,3) -- (4,2) -- cycle;
	\fill[gray!50,nearly transparent] (0,6) -- (2,6) -- (3,5) -- (3,4) -- (2,4) --cycle;
	\fill[gray!50,nearly transparent] (6,0) -- (6,2) -- (5,3) -- (4,3) -- (4,2) --cycle;
	\fill[gray!50,nearly transparent] (4,6) -- (6,6) -- (6,4) -- (5,4) -- (4,5) --cycle;
	\fill (6,0) circle (5pt);
	\fill (0,6) circle (5pt);
	\fill (6,6) circle (5pt);
	\fill (5,2) circle (5pt);
	\fill (5,5) circle (5pt);
	\fill (2,5) circle (5pt);

	\node at (-.5,6) {$1$};
	\node at (1.5,5.3) {$2$};
	\node at (6.5,6) {$3$};
	\node at (5.5,4.9) {$4$};
	\node at (6.5,0) {$5$};
	\node at (5.3,1.5) {$6$};

	\node at (1,2) {\#20};

\end{tikzpicture}
\caption{$(\mathbb{P}(\Sigma_\mathbf{a})_\Bbbk;\overline{a}_1(0),\ldots,\overline{a}_6(0))$.}\label{fig:20}
    \end{subfigure}
    \begin{subfigure}{.3\textwidth}
            \begin{tikzpicture}[scale=0.5]

	\draw[line width=1pt] (0,6) -- (6,6);
	\draw[line width=1pt] (6,0) -- (6,6);
	\draw[line width=1pt] (0,6) -- (6,0);
	\draw[line width=1pt] (4,6) -- (4,2);
	\draw[line width=1pt] (6,4) -- (2,4);
    \fill[gray!50,nearly transparent] (0,6) -- (4,6) -- (4,4) -- (2,4) -- cycle;
    \fill[gray!50,nearly transparent] (6,0) -- (6,4) -- (4,4) -- (4,2) -- cycle;
    \fill[gray!50,nearly transparent] (2,4) -- (4,4) -- (4,2) -- cycle;
	\fill (0,6) circle (5pt);
	\fill (2.5,5) circle (5pt);
	\fill (6,0) circle (5pt);
	\fill (5,2.5) circle (5pt);
	\fill (6,6) circle (5pt);

	\node at (0,6.7) {$1$};
	\node at (3,5) {$2$};
	\node at (6.7,0) {$5$};
	\node at (5.1,3.2) {$6$};
	\node at (6.7,6.7) {$3$};

\end{tikzpicture}
\caption{$\rho_4(\mathbb{P}(\Sigma_{\mathbf{a}})_\Bbbk)=\mathbb{P}(\Sigma_{\pi_4(\mathbf{a})})_\Bbbk$.}\label{fig:conic-in-case-20_forget4}
    \end{subfigure}
   \begin{subfigure}{.3\textwidth}

\begin{tikzpicture}[scale=0.5]
\draw (8.0,-5.0) node {$4$};
\draw (5.5,-4.0) node {$1$};
\draw (5.5,-5.0) node {$2$};
\draw (8.0,-4.0) node {$3$};
\draw (10.5,-4.0) node {$5$};
\draw (10.5,-5.0) node {$6$};
\draw (10.5,-7.5) node {};
\draw [line width=1.pt, color = black] (4.,-3.)-- (11.,-3.);
\draw [line width=1.pt, color = lightgray] (5.,-2.)-- (5.,-6.);
\draw [line width=1.pt, color = lightgray] (10.,-2.)-- (10.,-6.);
\draw [line width=1.pt, color = lightgray] (7.5,-2.)-- (7.5,-6.);

\draw [fill=black] (5.,-4.013611788132217) circle (4.0pt);
\draw [fill=black] (5.,-4.994667521093667) circle (4.0pt);
\draw [fill=black] (7.5,-4.017594265232974) circle (4.0pt);
\draw [fill=black] (7.5,-4.993599487343801) circle (4.0pt);
\draw [fill=black] (10.,-4.021576742333731) circle (4.0pt);
\draw [fill=black] (10.,-4.9925314535939345) circle (4.0pt);

\end{tikzpicture}
\caption{$(\overline{\mathcal{C}}_0;p_1,\ldots,p_6)$.}\label{fig:curve-case-20}
   \end{subfigure}

    \caption{Computation of the limit curve $\overline{\mathcal{C}}_0$ in type \#20.}
    \label{fig:conic-in-case-20}
\end{figure}

\textbf{Type \#20.} Let us consider $(\mathbb{P}(\Sigma_\mathbf{a})_\Bbbk;\overline{a}_1(0),\ldots,\overline{a}_6(0))$ as in Figure~\ref{fig:20}. Let $H$ be the secondary component of $\mathbb{P}(\Sigma_\mathbf{a})_\Bbbk$ isomorphic to the blow up of $\mathbb{P}^2$ at three not aligned points. If we consider $\rho_4$, then $\Sigma_{\pi_4(\mathbf{a})}=\Sigma_{\mathbf{a}}\setminus\{[L_1],[L_2],[L_3]\}$, where $L_1$ is the lattice stabilizing the quadruple $1234$, $L_2$ stabilizes the quadruple $3456$, and $L_3$ stabilizes the quadruples
\[
1345,~1346,~2345,~2346.
\]
Hence, $\mathbb{P}(\Sigma_{\pi_4(\mathbf{a})})_\Bbbk$ is as shown in Figure~\ref{fig:conic-in-case-20_forget4}, which is type II in Figure~\ref{fig:degenerations-for-5-points}. We have that three primary components of $\mathbb{P}(\Sigma_\mathbf{a})_\Bbbk$ are contracted under $\rho_4$, while $\rho_4(H)$ is the unique secondary component of $\mathbb{P}(\Sigma_{\pi_4(\mathbf{a})})_\Bbbk$, which is isomorphic to $\PP^1\times \PP^1$. Then, the curve $\overline{\rho_4(\mathcal{C})}_0$ is of type II as described by Proposition~\ref{prop:degenerate-5-pointed-conics-0-strata} with $i=1,j=2,k=5,\ell=6,m=3$. In an analogous way, we can describe the $5$-pointed curves $\overline{\rho_2(\mathcal{C})}_0$ (with markings $i=3,j=4,k=5,\ell=6,m=1$) and $\overline{\rho_6(\mathcal{C})}_0$ (with markings $i=1,j=2,k=3,\ell=4,m=5$). These three curves will be used in the considerations that follow.

Let $F\subseteq\PP(\Sigma_\mathbf{a})_\Bbbk$ be the irreducible component isomorphic to $\mathbb{F}_1$ mapping birationally to $T\cong\mathbb{P}^2$ under $\rho_4$. Since $\overline{\rho_4(\mathcal{C})}_0$ does not intersect $T$, we have that $\overline{\mathcal{C}}_0$ has empty intersection with $F$. 
The same argument using $\rho_2$ and $\rho_6$ shows that $\overline{\mathcal{C}}_0$ has empty intersection with each of the three primary components of $\PP(\Sigma_\mathbf{a})_\Bbbk$ isomorphic to $\mathbb{F}_1.$

Next, we describe $\overline{\mathcal{C}}_0|_H$. First, we prove that $\overline{\mathcal{C}}_0$ does not contain any of the six boundary curves $L$ of $H$. If $L$ corresponds to the $(-1)$-curve of an $\mathbb{F}_1$ component, then $L$ cannot be an irreducible component of $\overline{\mathcal{C}}_0$ as we already proved that $\overline{\mathcal{C}}_0$ has empty intersection with the $\mathbb{F}_1$ component. So, suppose that $L$ corresponds to the $(-1)$-curve of a $\mathrm{Bl}_2\mathbb{P}^2$ component. We denote these components by $P_{ij}$ and by $L_{ij}$ the $(-1)$-curve $H\cap P_{ij}$, where $p_i,p_j\in P_{ij}$. If $L_{56}\subseteq\overline{\mathcal{C}}_0$, then $\overline{\rho_4(\mathcal{C})}_0$ has to contain the curve $\rho_4(L_{56})$, which it does not. So, $L_{56}$ is not contained in $\overline{\mathcal{C}}_0$, and by an analogous argument also $L_{12}\nsubseteq\overline{\mathcal{C}}_0$. Considering $\rho_2$ instead, we also see that $L_{34}\nsubseteq\overline{\mathcal{C}}_0$. Now, let us consider the blow up $\rho_4|_H\colon H\rightarrow Q$, where $Q\cong\mathbb{P}^1\times\mathbb{P}^1$. Then, $\overline{\mathcal{C}}_0|_H$ coincides with the strict transform of the $(1,1)$-curve $\overline{\rho_4(\mathcal{C})}_0$ in $Q$. Hence, $\overline{\mathcal{C}}_0|_H$ is isomorphic to $\mathbb{P}^1$ and it intersects each $L_{12},L_{34},L_{56}$ transversely at one single point.

Lastly, we describe $\overline{\mathcal{C}}_0|_{P_{ij}}$. From the argument so far, we know that $\overline{\mathcal{C}}_0|_{P_{12}}$ does not have the three boundary curves of $P_{12}$ that are in the double locus as irreducible components. So, $\overline{\mathcal{C}}_0|_{P_{12}}$ coincides with the strict transform under the blow up $\rho_4|_{P_{12}}\colon P_{12}\rightarrow F$ of the curve $\overline{\rho_4(\mathcal{C})}_0|_F$, where $F$ is the irreducible component of $\mathbb{P}(\Sigma_{\pi_4(\mathbf{a})})_\Bbbk$ isomorphic to $\mathbb{F}_1$ containing the markings $\rho_4(p_1),\rho_4(p_2)$. As $\overline{\rho_4(\mathcal{C})}_0|_F$ is a $1$-section of the $\mathbb{P}^1$-fibration, then $\overline{\mathcal{C}}_0|_{P_{12}}$ is also isomorphic to $\mathbb{P}^1$ and passes through $p_1,p_2$. An analogous description holds for $\overline{\mathcal{C}}_0|_{P_{34}}$ and $\overline{\mathcal{C}}_0|_{P_{56}}$.

The discussion above gives the marked curve $(\overline{\mathcal{C}}_0;p_1,\ldots,p_6)$, which we picture in Figure~\ref{fig:curve-case-20}.

\begin{figure}[h]
    \centering
    \begin{subfigure}{.3\textwidth}
            \begin{tikzpicture}[scale=0.5]

	\draw[line width=1pt] (0,6) -- (6,6);
	\draw[line width=1pt] (6,6) -- (6,0);
	\draw[line width=1pt] (6,0) -- (0,6);
	\draw[line width=1pt] (4.5,6) -- (4.5,1.5);
	\draw[line width=1pt] (1.5,4.5) -- (6,4.5);
	\fill[gray!50,nearly transparent] (1.5,4.5) -- (4.5,4.5) -- (4.5,1.5) -- cycle;
	\fill[gray!50,nearly transparent] (1.5,4.5) -- (4.5,4.5) -- (4.5,6) -- (0,6) -- cycle;
	\fill[gray!50,nearly transparent] (4.5,1.5) -- (4.5,4.5) -- (6,4.5) -- (6,0) -- cycle;
	\fill (6,0) circle (5pt);
	\fill (0,6) circle (5pt);
	\fill (5.5,1.5) circle (5pt);
	\fill (5.6,3) circle (5pt);
	\fill (1.5,5.5) circle (5pt);
	\fill (3,5.6) circle (5pt);

	\node at (-.5,6) {$1$};
	\node at (1.7,5) {$2$};
	\node at (3.2,5.1) {$3$};
	\node at (6.5,0) {$4$};
	\node at (5,1.7) {$5$};
	\node at (5.1,3.2) {$6$};

	\node at (5,5) {$r$};
    \draw[line width=1pt] (4.5,4.5) circle (0.2cm);

	\node at (1,2) {\#2};

\end{tikzpicture}
\caption{$(\mathbb{P}(\Sigma_\mathbf{a})_\Bbbk;\overline{a}_1(0),\ldots,\overline{a}_6(0))$.}\label{fig:2}
    \end{subfigure}
    \begin{subfigure}{.3\textwidth}
            \begin{tikzpicture}[scale=0.5]

	\draw[line width=1pt] (0,6) -- (6,6);
	\draw[line width=1pt] (6,6) -- (6,0);
	\draw[line width=1pt] (6,0) -- (0,6);
\draw[line width=1pt] (1.5,4.5) -- (6,4.5);
	\fill[gray!50,nearly transparent] (0,6) -- (6,6) -- (6,0) -- cycle;
	\fill (6,0) circle (5pt);
	\fill (0,6) circle (5pt);
	\fill (6,1.5) circle (5pt);
	\fill (1.5,5.5) circle (5pt);
	\fill (3,5.6) circle (5pt);
    
	\node at (-.5,6) {$1$};
	\node at (1.7,5) {$2$};
	\node at (3.2,5.1) {$3$};
	\node at (6.5,0) {$4$};
	\node at (6.5,1.7) {$5$};

	\node at (7.1,4.5) {$\rho_6(r)$};
    \draw[line width=1pt] (6,4.5) circle (0.2cm);

\end{tikzpicture}
\caption{$\rho_6(\mathbb{P}(\Sigma_{\mathbf{a}})_\Bbbk)=\mathbb{P}(\Sigma_{\pi_6(\mathbf{a})})_\Bbbk$.}\label{fig:conic-in-case-2_forget6}
    \end{subfigure}
   \begin{subfigure}{.3\textwidth}
\begin{tikzpicture}[scale=0.5]

\draw [line width=1.pt] (0.,0.)-- (5.,3.);
\draw [line width=1.pt] (3.,3.)-- (8.,0.);
\draw (3.3,1.3) node {$4$};
\draw (4.6,1.3) node {$1$};
\draw (5.6,0.6) node {$2$};
\draw (6.6,0.1) node {$3$};
\draw (2.3,0.6) node {$5$};
\draw (1.3,0.1) node {$6$};

\draw (1.3,-2) node {};

\draw [fill=black] (2.,1.2) circle (4.0pt);
\draw [fill=black] (3.,1.8) circle (4.0pt);
\draw [fill=black] (1.,0.6) circle (4.0pt);
\draw [fill=black] (6.,1.2) circle (4.0pt);
\draw [fill=black] (5.,1.8) circle (4.0pt);
\draw [fill=black] (7.,0.6) circle (4.0pt);

\end{tikzpicture}
\caption{$(\overline{\mathcal{C}}_0;p_1,\ldots,p_6)$.}\label{fig:curve-case-2}
   \end{subfigure}

    \caption{Computation of the limit curve $\overline{\mathcal{C}}_0$ in type \#2.}
    \label{fig:conic-in-case-2}
\end{figure}

\textbf{Type \#2.} Consider $(\mathbb{P}(\Sigma_\mathbf{a})_\Bbbk;\overline{a}_1(0),\ldots,\overline{a}_6(0))$ as in Figure~\ref{fig:2}. We describe $\overline{\mathcal{C}}_0$ by computing its restriction to the four irreducible components of $\mathbb{P}(\Sigma_\mathbf{a})_\Bbbk=F_1\cup F_2\cup T\cup Q$, where $F_1,F_2\cong\mathbb{F}_1$ (with $p_1=\overline{a}_1(0)$ being on $F_1$), $T\cong\mathbb{P}^2$, and $Q\cong\mathbb{P}^1\times\mathbb{P}^1$. The common intersection point of the four components is denoted by $r$.

Let us consider $\rho_6\colon\mathbb{P}(\Sigma_\mathbf{a})\rightarrow\mathbb{P}(\Sigma_{\pi_6(\mathbf{a})})$. We have that $\Sigma_{\pi_6(\mathbf{a})}=\Sigma_{\mathbf{a}}\setminus\{[L]\}$, where $L$ is the lattice stabilizing the quadruples $1456$, $2456$, $3456$. Hence, $\mathbb{P}(\Sigma_{\pi_6(\mathbf{a})})_\Bbbk$ is of type I as in Figure~\ref{fig:degenerations-for-5-points}. In this case, the components $F_2,Q$ are each contracted to a curve, while $F_1$ and $P$ are mapped isomorphically. See Figure \ref{fig:conic-in-case-2_forget6}.

By Proposition~\ref{prop:degenerate-5-pointed-conics-0-strata}, we know that the curve $\overline{\rho_6(\mathcal{C})}_0$ can be described as follows: $\overline{\rho_6(\mathcal{C})}_0|_{\rho_6(F_1)}$ is the strict transform of a conic through the blow up $F_1\rightarrow\mathbb{P}^2$ passing through $\rho_6(p_1),\rho_6(p_2),\rho_6(p_3),\rho_6(r)$, and $\overline{\rho_6(\mathcal{C})}_0|_{\rho_6(T)}$ is the line passing through $\rho_6(r),\rho_6(p_4),\rho_6(p_5)$. As $\rho_6|_{F_1}$ is an isomorphism, this describes $\overline{\mathcal{C}}_0|_{F_1}$. Symmetrically we also obtain $\overline{\mathcal{C}}_0|_{F_2}$. From this it also follows that $\overline{\mathcal{C}}_0|_T=\{r\}$ because, for $i=1,2$, $\overline{\mathcal{C}}|_{F_i}$ intersects the exceptional curve in $F_i$ only at the point $r$ and $\overline{\mathcal{C}}|_T$ is entirely contained in $(T\cap F_1)\cup(T\cap F_2)$.

Lastly, let us prove that $\overline{\mathcal{C}}_0|_{Q}=\{r\}$. Suppose by contradiction that $\overline{\mathcal{C}}_0|_{Q}$ consists of at least two points. As we already know that one of these two points is $r$, let us denote the second one by $r'$. The forgetful morphisms $\rho_6$ (resp. $\rho_3$) contracts $Q$ along one of its two rulings, mapping $Q$ to a fiber of $\rho_6(F_1)\cong\mathbb{F}_1$ (resp. $\rho_3(F_2)\cong\mathbb{F}_1$). As $r\neq r'$,  we have $\rho_6(r)\neq\rho_6(r')$ or $\rho_3(r)\neq\rho_3(r')$. This is impossible by Remark~\ref{rmk:typeI-intersection-fiber-strict-transform-conic} because, in type I, a ruling of the $\mathbb{F}_1$ component intersects the strict transform of the conic in a unique point. So, $\overline{\mathcal{C}}_0|_{Q}$ only consists of the point $r$. This ultimately describes $(\overline{\mathcal{C}}_0;p_1,\ldots,p_6)$, which we picture in Figure~\ref{fig:curve-case-2}.

\begin{figure}[h]
    \centering
    \begin{subfigure}{.3\textwidth}
   \begin{tikzpicture}[scale=0.5]

	\draw[line width=1pt] (0,6) -- (6,6);
	\draw[line width=1pt] (6,6) -- (6,0);
	\draw[line width=1pt] (6,0) -- (0,6);
	\draw[line width=1pt] (18/4,6) -- (18/4,6/4);
	\draw[line width=1pt] (12/4,6) -- (12/4,12/4);
	\draw[line width=1pt] (6,6-6/4) -- (6/4,6-6/4);
	\fill[gray!50,nearly transparent] (6/4,6-6/4) -- (6,6-6/4) -- (6,0) -- cycle;
	\fill[gray!50,nearly transparent] (0,6) -- (12/4,6) -- (12/4,6-6/4) -- (6/4,6-6/4) --cycle;
	\fill (6,0) circle (5pt);
	\fill (0,6) circle (5pt);
	\fill (5.5,3.5) circle (5pt);
	\fill (1.8,5.2) circle (5pt);
	\fill (15/4,5.2) circle (5pt);
	\fill (5.5,1.8) circle (5pt);
    \draw[line width=1pt] (4.5,4.5) circle (0.2cm);

	\node at (5,3.5) {$4$};
	\node at (2.3,5.5) {$5$};
	\node at (-.5,6) {$6$};
	\node at (4.2,5.5) {$1$};
	\node at (5,1.8) {$2$};
	\node at (6.5,0) {$3$};

	\node at (1,2) {\#10};
	\node at (5,5) {$r$};
\end{tikzpicture}
\caption{$(\mathbb{P}(\Sigma_\mathbf{a})_\Bbbk;\overline{a}_1(0),\ldots,\overline{a}_6(0))$.}\label{fig:10}
    \end{subfigure}
    \begin{subfigure}{.3\textwidth}
            \begin{tikzpicture}[scale=0.5]

	\draw[line width=1pt] (0,6) -- (6,6);
	\draw[line width=1pt] (6,0) -- (6,6);
	\draw[line width=1pt] (0,6) -- (6,0);
	\draw[line width=1pt] (4,6) -- (4,2);
	\draw[line width=1pt] (6,4) -- (2,4);
    \fill[gray!50,nearly transparent] (0,6) -- (4,6) -- (4,4) -- (2,4) -- cycle;
    \fill[gray!50,nearly transparent] (6,0) -- (6,4) -- (4,4) -- (4,2) -- cycle;
    \fill[gray!50,nearly transparent] (2,4) -- (4,4) -- (4,2) -- cycle;
	\fill (0,6) circle (5pt);
	\fill (2.5,5) circle (5pt);
	\fill (6,0) circle (5pt);
	\fill (6,2.5) circle (5pt);
	\fill (5,5) circle (5pt);
    \draw[line width=1pt] (6,4) circle (0.2cm);

	\node at (0,6.7) {$6$};
	\node at (3,5) {$5$};
	\node at (6.7,0) {$3$};
	\node at (6.7,2.5) {$4$};
	\node at (5.5,5.5) {$1$};

	\node at (1,2) {};
 \node at (7.2,4) {$\rho_2(r)$};
\end{tikzpicture}
\caption{$\rho_2(\mathbb{P}(\Sigma_{\mathbf{a}})_\Bbbk)=\mathbb{P}(\Sigma_{\pi_2(\mathbf{a})})_\Bbbk$.}\label{fig:conic-in-case-10_forget2}
    \end{subfigure}
    \begin{subfigure}{.3\textwidth}
            \begin{tikzpicture}[scale=0.5]

	\draw[line width=1pt] (0,6) -- (6,6);
	\draw[line width=1pt] (6,6) -- (6,0);
	\draw[line width=1pt] (6,0) -- (0,6);
\draw[line width=1pt] (4.5,1.5) -- (4.5,6);
	\fill[gray!50,nearly transparent] (0,6) -- (6,6) -- (6,0) -- cycle;
	\fill (6,0) circle (5pt);
	\fill (0,6) circle (5pt);
	\fill (5.5,3.5) circle (5pt);
	\fill (1.8,6) circle (5pt);
	\fill (5.5,1.8) circle (5pt);
    \draw[line width=1pt] (4.5,6) circle (0.2cm);

	\node at (5,3.5) {$4$};
	\node at (1.8,6.7) {$1$};
	\node at (-.5,6) {$5$};
	\node at (5,1.8) {$2$};
	\node at (6.5,0) {$3$};

	\node at (4.5,7.0) {$\rho_6(r)$};
\end{tikzpicture}
\caption{$\rho_6(\mathbb{P}(\Sigma_{\mathbf{a}})_\Bbbk)=\mathbb{P}(\Sigma_{\pi_6(\mathbf{a})})_\Bbbk$.}\label{fig:conic-in-case-10_forget6}
    \end{subfigure}
   \begin{subfigure}{.3\textwidth}
\begin{tikzpicture}[scale=0.5]
\node at (7.5,-1) {};
\draw (7.5,-3.6) node {$1$};
\draw (9.86,-4.2) node {$2$};
\draw (10.44,-4.9) node {$3$};
\draw (11.,-5.6) node {$4$};
\draw (4.8,-4.5) node {$5$};
\draw (4.1,-5.4) node {$6$};
\draw [line width=1.pt, color = lightgray] (3.,-6.)-- (6.,-2.);
\draw [line width=1.pt] (9.,-2.)-- (12.,-6.);
\draw [line width=1.pt] (4.,-3.)-- (11.,-3.);

\draw [fill=black] (7.5,-3.) circle (4.0pt);
\draw [fill=black] (10.2888,-3.7184) circle (4.0pt);
\draw [fill=black] (10.875,-4.5) circle (4.0pt);
\draw [fill=black] (11.4375,-5.25) circle (4.0pt);
\draw [fill=black] (4.4832,-4.0224) circle (4.0pt);
\draw [fill=black] (3.756,-4.992) circle (4.0pt);

\end{tikzpicture}
\caption{$(\overline{\mathcal{C}}_0;p_1,\ldots,p_6)$.}\label{fig:curve-case-10}
   \end{subfigure}

    \caption{Computation of the limit curve $\overline{\mathcal{C}}_0$ in type \#10.}
    \label{fig:conic-in-case-10}
\end{figure}

\textbf{Type \#10.} Consider $(\mathbb{P}(\Sigma_\mathbf{a})_\Bbbk;\overline{a}_1(0),\ldots,\overline{a}_6(0))$ as in Figure \ref{fig:10}. We describe $\overline{\mathcal{C}}_0$ by computing its restriction to the six irreducible components of $\mathbb{P}(\Sigma_\mathbf{a})_\Bbbk = T\cup Q_1\cup Q_2\cup F_1\cup F_2\cup F_3,$ where $T\cong\mathbb{P}^2$, $Q_i\cong\mathbb{P}^1\times\mathbb{P}^1$ for $i=1,2$, and $F_j\cong\mathbb{F}_1$, for $j=1,2,3$. Up to relabeling these components, we suppose that $p_1\in Q_1$, $p_5,p_6\in F_1$, $p_2,p_3,p_4\in F_2$, and $T,F_3,Q_2$ contain no markings. We denote the common intersection point of $Q_1,Q_2,F_2,F_3$ by $r$.

Let us consider $\rho_2\colon\mathbb{P}(\Sigma_\mathbf{a})\rightarrow\mathbb{P}(\Sigma_{\pi_2(\mathbf{a})})$. By forgetting $p_2$, $F_2$ is no longer a primary component, and $\rho_2$ maps $F_2$ to a $\mathbb{P}^1$ containing the points $\rho_2(p_3),\rho_2(p_4)$. The component $Q_2$ is contracted along with $F_2$, and the components $T,F_1,F_3,Q_1$ are mapped isomorphically to their images. Then, we obtain Figure~\ref{fig:conic-in-case-10_forget2}, which is of type II as in Figure~\ref{fig:degenerations-for-5-points}. So, by Proposition~\ref{prop:degenerate-5-pointed-conics-0-strata}, $\overline{\rho_2(\mathcal{C})}_0$ can be understood as follows:
\begin{itemize}

\item[(i)] $\overline{\rho_2(\mathcal{C})}_0|_{\rho_2(T)}$ is empty, which implies that also $\overline{\mathcal{C}}_0|_T$ is empty.

\item[(ii)] Under the blow up $\rho_2(F_1)\rightarrow\mathbb{P}^2$, $\overline{\rho_2(\mathcal{C})}_0|_{\rho_2(F_1)}$ is the strict transform of a line away from the blown up point. Also, $\overline{\rho_2(\mathcal{C})}_0|_{\rho_2(F_1)}$ and contains $\rho_2(p_5),\rho_2(p_6)$. This describes $\overline{\mathcal{C}}_0|_{F_1}$ via the isomorphism $F_1\cong\rho_2(F_1)$.

\item[(iii)] Under the blow up $\rho_2(F_3)\rightarrow\mathbb{P}^2$, $\overline{\rho_2(\mathcal{C})}_0|_{\rho_2(F_3)}$ is the strict transform of a line away from the blown up point. Also, $\overline{\rho_2(\mathcal{C})}_0|_{\rho_2(F_3)}$ contains $\rho_2(p_3),\rho_2(p_4),\rho_2(r)$. As $F_2$ is contracted by $\rho_2$ to $\overline{\rho_2(\mathcal{C})}_0|_{\rho_2(F_3)}$, we have that $\overline{\mathcal{C}}_0|_{F_3}$ is contained in $F_3\cap F_2$ (we will later show that $\overline{\mathcal{C}}_0|_{F_3}=\{r\}$.)

\item[(iv)] $\overline{\rho_2(\mathcal{C})}_0|_{\rho_2(Q_1)}$ is a smooth $(1,1)$-curve passing through $\rho_2(p_1)$ and $\rho_2(r)$. This describes $\overline{\mathcal{C}}_0|_{Q_1}$ via the isomorphism $Q_1\cong\rho_2(Q_1)$.

\end{itemize}

To characterize the restriction of $\overline{\mathcal{C}}_0$ to $F_2,F_3,Q_2$ we also need to consider the forgetful morphism $\rho_6\colon\mathbb{P}(\Sigma_\mathbf{a})\rightarrow\mathbb{P}(\Sigma_{\pi_6(\mathbf{a})})$. We have that $\rho_6$ contracts $Q_1,Q_2,T,F_1$, while $F_2$ is mapped isomorphically to its image and $\rho_6(F_3)\cong\mathbb{P}^2$. See Figure~\ref{fig:conic-in-case-10_forget6}. This is type I as in Figure~\ref{fig:degenerations-for-5-points}, and Proposition~\ref{prop:degenerate-5-pointed-conics-1-strata} gives $\overline{\rho_6(\mathcal{C})}_0$.
In particular, $\overline{\rho_6(\mathcal{C})}_0|_{\rho_6(F_2)}$ is the strict transform of a conic passing through $p_2,p_3,p_4$ and intersects the exceptional divisor of $\rho_6(F_2)$ in $\rho_6(r)$. This describes $\overline{\mathcal{C}}_0|_{F_2}$ via the isomorphism $F_2\cong\rho_6(F_2)$, and combined with (iii) above gives that $\overline{\mathcal{C}}_0|_{F_3}=\{r\}$.

Finally, we have that $\overline{\mathcal{C}}_0|_{Q_2}=\{r\}$. The proof of this is analogous to the one for type~\#2. The marked curve $(\overline{\mathcal{C}}_0;p_1,\ldots,p_6)$ is in Figure~\ref{fig:curve-case-10}.

\begin{figure}[h]
    \centering
    \begin{subfigure}{.3\textwidth}
\begin{tikzpicture}[scale=0.5]

	\draw[line width=1pt] (0,6) -- (6,6);
	\draw[line width=1pt] (6,6) -- (6,0);
	\draw[line width=1pt] (6,0) -- (0,6);
	\draw[line width=1pt] (1,5) -- (6,5);
	\draw[line width=1pt] (5,6) -- (5,1);
	\draw[line width=1pt] (2,4) -- (3,4);
	\draw[line width=1pt] (3,6) -- (3,4);
	\draw[line width=1pt] (3,4) -- (4,3);
	\draw[line width=1pt] (4,3) -- (4,2);
	\draw[line width=1pt] (4,3) -- (6,3);
	\fill[gray!50,nearly transparent] (0,6) -- (3,6) -- (3,4) -- (2,4) -- (1,5) -- cycle;
	\fill[gray!50,nearly transparent] (6,0) -- (6,3) -- (4,3) -- (4,2) -- (5,1) -- cycle;
	\fill[gray!50,nearly transparent] (3,4) -- (3,5) -- (5,5) -- (5,3) -- (4,3) -- cycle;
	\fill[gray!50,nearly transparent] (2,4) -- (3,4) -- (4,3) -- (4,2) -- cycle;
	\fill (6,0) circle (5pt);
	\fill (0,6) circle (5pt);
	\fill (5.5,4) circle (5pt);
	\fill (4,5.5) circle (5pt);
	\fill (1.5,5.5) circle (5pt);
	\fill (5.5,1.5) circle (5pt);
    \draw[line width=1pt] (5,5) circle (0.2cm);

	\node at (2.1,5.5) {$6$};
	\node at (-.5,6) {$5$};
	\node at (4.6,5.6) {$2$};
	\node at (6.5,0) {$3$};
	\node at (5.7,4.6) {$1$};
	\node at (5.5,2.1) {$4$};
    \node at (5.5,5.5) {$r$};

	\node at (1,2) {\#23};

\end{tikzpicture}
\caption{$(\mathbb{P}(\Sigma_\mathbf{a})_\Bbbk;\overline{a}_1(0),\ldots,\overline{a}_6(0))$.}\label{fig:23}
    \end{subfigure}
    \begin{subfigure}{.3\textwidth}
            \begin{tikzpicture}[scale=0.5]

	\draw[line width=1pt] (0,6) -- (6,6);
	\draw[line width=1pt] (6,0) -- (6,6);
	\draw[line width=1pt] (0,6) -- (6,0);
	\draw[line width=1pt] (4,6) -- (4,2);
	\draw[line width=1pt] (6,4) -- (2,4);
    \fill[gray!50,nearly transparent] (0,6) -- (4,6) -- (4,4) -- (2,4) -- cycle;
    \fill[gray!50,nearly transparent] (6,0) -- (6,4) -- (4,4) -- (4,2) -- cycle;
    \fill[gray!50,nearly transparent] (2,4) -- (4,4) -- (4,2) -- cycle;
	\fill (0,6) circle (5pt);
	\fill (5,2.5) circle (5pt);
	\fill (6,0) circle (5pt);
	\fill (2.5,6) circle (5pt);
	\fill (5,5) circle (5pt);
    \draw[line width=1pt] (4,6) circle (0.2cm);

	\node at (6.7,0) {$3$};
	\node at (5,3) {$4$};
	\node at (0,6.7) {$5$};
	\node at (2.5,6.7) {$2$};
	\node at (5.5,5.5) {$1$};

	\node at (1,2) {};
 \node at (4,6.7) {$\rho_6(r)$};
\end{tikzpicture}
\caption{$\rho_6(\mathbb{P}(\Sigma_{\mathbf{a}})_\Bbbk)=\mathbb{P}(\Sigma_{\pi_6(\mathbf{a})})_\Bbbk$.}\label{fig:conic-in-case-23_forget6}
    \end{subfigure}
   \begin{subfigure}{.3\textwidth}
\begin{tikzpicture}[scale=0.5]

\draw (11.1,-5.7) node {$4$};
\draw (8.9,-4.0) node {$1$};
\draw (6.0,-4.0) node {$2$};
\draw (10.62,-4.8) node {$3$};
\draw (4.3,-4.8) node {$5$};
\draw (3.8,-5.7) node {$6$};
\draw (3.8,-7.7) node {};
\draw [color = lightgray,line width=1.pt] (3.,-6.)-- (4.605108332342063,-2.7897833353158727);
\draw [line width=1.pt] (3.1931309156043723,-3.947327932107899)-- (8.464763821237732,-2.5096098669351647);
\draw [line width=1.pt] (6.535236178762268,-2.5096098669351647)-- (11.806869084395656,-3.947327932107905);
\draw [color= lightgray,line width=1.pt] (10.394891667657935,-2.7897833353158736)-- (12.,-6.);

\draw [fill] (5.828947368421053,-3.2284688995215314) circle (4.0pt);
\draw [fill] (9.171052631578949,-3.2284688995215314) circle (4.0pt);
\draw [fill] (3.771929824561402,-4.456140350877196) circle (4.0pt);
\draw [fill] (3.3859649122807,-5.2280701754386) circle (4.0pt);
\draw [fill] (11.228070175438598,-4.456140350877197) circle (4.0pt);
\draw [fill] (11.6140350877193,-5.228070175438599) circle (4.0pt);

\end{tikzpicture}
\caption{$(\overline{\mathcal{C}}_0;p_1,\ldots,p_6)$.}\label{fig:curve-case-23}
   \end{subfigure}

    \caption{Computation of the limit curve $\overline{\mathcal{C}}_0$ in type \#23.}
    \label{fig:conic-in-case-23}
\end{figure}

\textbf{Type \#23.} Consider $(\mathbb{P}(\Sigma_\mathbf{a})_\Bbbk;\overline{a}_1(0),\ldots,\overline{a}_6(0))$ as in Figure~\ref{fig:23}. We describe $\overline{\mathcal{C}}_0$ by computing its restriction to the nine irreducible components of $\mathbb{P}(\Sigma_\mathbf{a})_\Bbbk = P\cup\bigcup_{i=1}^3 Q_i\cup\bigcup_{j=1}^5 F_j,$ where $P\cong \mathrm{Bl}_2(\mathbb{P}^2)$, $Q_i\cong\mathbb{P}^1\times\mathbb{P}^1$ for $i=1,2,3$, and $F_j\cong\mathbb{F}_1$, for $j=1,\ldots,5$. Up to relabeling of these components, suppose that $p_1\in Q_1$, $p_2\in Q_2$, $p_5,p_6\in F_1$, $p_3,p_4\in F_5$, and $P,Q_3,F_2,F_3,F_4$ contain no markings. We denote by $r$ the common intersection point of $Q_1,Q_2,Q_3,P$. The component $F_2$ intersects $F_1$, while $F_4$ intersects $F_5$.

Let us consider $\rho_6\colon\mathbb{P}(\Sigma_\mathbf{a})\rightarrow\mathbb{P}(\Sigma_{\pi_6(\mathbf{a})})$. By forgetting $p_6$, we have that $F_1,F_2,F_3$ are no longer primary components, and $\rho_6$ contracts $F_1,F_2$ to a point and $F_3$ to a curve contained in $\rho_6(P)$. $Q_2$ is contracted along a ruling to a $\mathbb{P}^1$ and the same happens for $Q_3$. On the other hand, $Q_1$ and $F_5$ are mapped isomorphically to their images, $\rho_6(P)\cong\mathbb{F}_1$ with $\rho_6(p_2)$ and $\rho_6(p_5)$ in its boundary, and $\rho_6(F_4)\cong\mathbb{P}^2$. The resulting $5$-pointed Mustafin degeneration is shown in  Figure~\ref{fig:conic-in-case-23_forget6}, which is of type II as in Figure~\ref{fig:degenerations-for-5-points}.

In particular, we know $\overline{\rho_6(\mathcal{C})}_0$ by Proposition~\ref{prop:degenerate-5-pointed-conics-0-strata}, from which we can argue the following:
\begin{itemize}

\item[(i)] $\overline{\rho_6(\mathcal{C})}_0|_{\rho_6(F_4)}$ is empty, hence also $\overline{\mathcal{C}}_0|_{F_4}$ is empty.

\item[(ii)] Under the blow up $\rho_6(F_5)\rightarrow\mathbb{P}^2$, $\overline{\rho_6(\mathcal{C})}_0|_{\rho_6(F_5)}$ is the strict transform of a line away from the blown up point. Also, $\overline{\rho_6(\mathcal{C})}_0|_{\rho_6(F_5)}$ contains $\rho_6(p_3),\rho_6(p_4)$. This describes $\overline{\mathcal{C}}_0|_{F_5}$ via the isomorphism $F_5\cong\rho_6(F_5)$.

\item[(iii)] $\overline{\rho_6(\mathcal{C})}_0|_{\rho_6(Q_1)}$ is a smooth $(1,1)$-curve passing through $\rho_6(p_1),\rho_6(r)$. This describes $\overline{C}_0|_{Q_1}$ via the isomorphism $Q_1\cong\rho_6(Q_1)$ as a smooth $(1,1)$ curve passing through $p_1,r$.

\item[(iv)] Under the blow up $\rho_6(P)\rightarrow\mathbb{P}^2$, $\overline{\rho_6(\mathcal{C})}_0|_{\rho_6(P)}$ is the strict transform of a line away from the blown up point, and it passes through $\rho_6(p_2),\rho_6(p_5),\rho_6(r)$.  This curve equals $\rho_6(Q_2)$, hence $\overline{\mathcal{C}}_0|_P\subseteq P\cap Q_2$ (we will later see that $\overline{\mathcal{C}}_0|_{P}=\{r\}$). Moreover, as $\rho_6(F_3)$ intersects $\overline{\rho_6(\mathcal{C})}_0|_{\rho_6(P)}$ only at $\rho_6(p_5) = \rho_6(F_2)$,  we know $\overline{\mathcal{C}}_0|_{F_3}$ is contained in the fiber $F_3\cap F_2$ that is mapped to $\rho_6(p_5)$ under the contraction $F_3\rightarrow\rho_6(F_3)$. 

\end{itemize}
By exploiting the combinatorial symmetry of $(\mathbb{P}(\Sigma_\mathbf{a})_\Bbbk;\overline{a}_1(0),\ldots,\overline{a}_6(0))$, and considering instead the forgetful morphism $\rho_4$, we conclude that
\begin{itemize}

\item[(i)] $\overline{\mathcal{C}}_0|_{F_2}$ is empty.

\item[(ii)] Under the blow up $F_1\rightarrow\mathbb{P}^2$, $\overline{\mathcal{C}}_0|_{F_1}$ is the strict transform of a line away from the blown up point containing $p_5,p_6$.

\item[(iii)] $\overline{\mathcal{C}}_0|_{Q_2}$ is a smooth $(1,1)$ curve passing through $p_2,r$.

\item[(iv)] $\overline{\mathcal{C}}_0|_P\subseteq P\cap Q_1$, which combined with (iv) above implies that $\overline{\mathcal{C}}_0|_P=\{r\}$. We also have that, $\overline{\mathcal{C}}_0|_{F_3}$ is contained in the fiber $F_3\cap F_4$ that is mapped to $\rho_4(p_3)$ under the contraction $F_3\rightarrow\rho_4(F_3)$. But from (iv) above we also know that $\overline{\mathcal{C}}_0|_{F_3}$ is contained in $F_3\cap F_2$, which is disjoint from $F_3\cap F_4$. In conclusion, $\overline{\mathcal{C}}_0|_{F_3}$ is empty.

\end{itemize}
Finally, we have that $\overline{\mathcal{C}}_0|_{Q_3}=\{r\}$. The proof of this is analogous to the one for type~\#2. We represent the marked curve $(\overline{\mathcal{C}}_0;p_1,\ldots,p_6)$ in Figure~\ref{fig:curve-case-23}.

\textbf{Types \#12.1, \#20.2, \#20.3, \#20.6, \#23.1.}
Let us consider $(X;p_1,\ldots,p_6)$ of type \#12.1 (an analogous argument holds for \#20.2, \#20.3, \#20.6, \#23.1). Let $(Y;q_1,\ldots,q_6)$ be a $6$-pointed Mustafin degeneration of type \#12 with the markings chosen so that $(Y;q_1,\ldots,q_6)$ can further degenerate to $(X;p_1,\ldots,p_6)$ (see \#12 and \#12.1 in Tables~\ref{tbl:GPdegenerationsself-dual-1} and \ref{tbl:GPdegenerationsself-dual-2}). We note that, in the specialization of $Y$ to $X$, one irreducible component $F\subseteq Y$ isomorphic to $\mathbb{F}_1$ breaks into $Q\cup T\subseteq X$, where $Q\cong\mathbb{P}^1\times\mathbb{P}^1$ is glued transversely to $T\cong\mathbb{P}^2$ along a ruling of the former and a line of the latter. We call $F$ a \emph{breaking component} of $Y$.

Now, we can reconstruct $\overline{\mathcal{C}}_0\subseteq X$ in the same way we did for \#12 in this proof. In particular, when considering the forgetful morphisms $\rho_i$, $i\in\{1,\ldots,6\}$, we have that $\rho_i(X)\cong\rho_i(Y)$ and $\rho_i(Q\cup T)=\rho_i(F)$, where one component of either $Q$ or $P$ in $X$ is mapped to the image of the breaking component $F$ of $Y$, while the other is mapped into its boundary. We note that $\overline{\mathcal{C}}_0$ has empty intersection with $Q\cup T$, as it was the case for $F$ in $Y$.

\medskip

Now that we know that $\overline{\mathcal{C}}_0$ is reduced and one-dimensional in all cases, we can conclude that $\overline{\mathcal{C}}$ is flat over $\mathrm{Spec}(R)$ by \cite[Lemma~10.12]{HKT09}. Finally, we need to argue that the isomorphism class of the tuple
\[
(\mathbb{P}(\Sigma_{\mathbf{a}})_\Bbbk,\overline{\mathcal{C}}_0;\overline{a}_1(0),\ldots,\overline{a}_6(0))
\]
is independent from the choice of $\mathbf{a}$ and only depends on the point $x\in\overline{\mathbf{S}}_{\mathrm{GP}}(3,6)$. This is true for $(\mathbb{P}(\Sigma_{\mathbf{a}})_\Bbbk;\overline{a}_1(0),\ldots,\overline{a}_6(0))$ as $x$ parametrizes its isomorphism class, so we only have to consider the curve $\overline{\mathcal{C}}_0$. For $i\in\{1,\ldots,6\}$, let $f_i\colon\overline{\mathbf{X}}_{\mathrm{GP}}(3,6)\rightarrow\overline{\mathbf{X}}_{\mathrm{GP}}(3,5)$ be the morphism forgetting the $i$-th label \cite[Proposition~7.10]{ST22}. As seen in the proof above, the conic $\overline{\mathcal{C}}_0$ was constructed from the curves $\overline{\rho_i(\mathcal{C})}_0$ for appropriate choices of $i\in\{1,\ldots,6\}$. We have that $\overline{\rho_i(\mathcal{C})}_0$ only depends on $f_i(x)$ and not on $\pi_i(\mathbf{a})$ by Propositions~\ref{prop:degenerate-5-pointed-conics-1-strata} and \ref{prop:degenerate-5-pointed-conics-0-strata}. Hence, also the curve $\overline{\mathcal{C}}_0$ only depends on $x$ and not on $\mathbf{a}$.
\end{proof}

We finish this section with an example of one-parameter family $\mathbf{a}=(a_1,\ldots,a_6)$ of six self-dual points where we compute $(\mathbb{P}(\Sigma_{\mathbf{a}})_\Bbbk,\overline{\mathcal{C}}_0;\overline{a}_1(0),\ldots,\overline{a}_6(0))$ via its defining polynomial equations, matching the description in the proof of Theorem~\ref{thm:1parameterfamilyconics} for the corresponding case.

\begin{example}\label{ex:deg_conic}
Let us consider the arc
\[
\mathbf{a}(t)=([1:0:0],[0:1:0],[t:1:1],[1:t:1],[4:t:2],[t:4:2]).
\]
For general $t$, these points are in general linear position. Further, they are contained in the smooth conic $\mathcal{C}=V(x_1x_2-tx_3^2)\subseteq\mathbb{P}_K^2$. In what follows, we compute $(\mathbb{P}(\Sigma_{\mathbf{a}})_\Bbbk,\overline{\mathcal{C}}_0;\overline{a}_1(0),\ldots,\overline{a}_6(0))$ explicitly.

We start by computing $\mathbb{P}(\Sigma_{\mathbf{a}})_\Bbbk$. The stable lattice classes $\Sigma_{\mathbf{a}}=\{[L_1],[L_2],[L_3]\}$ are as follows:
\begin{itemize}

\item $L_1=e_1R+e_2R+e_3R$ stabilizes the quadruples
\[
1234,1235,1246,1256,1346,1356,2345,2456,3456.
\]

\item $L_2=te_1R+e_2R+e_3R$ stabilizes $1236,2346,2356$.

\item $L_3=e_1R+te_2R+e_3R$ stabilizes $1245,1345,1456$.

\end{itemize}
We compute the Mustafin variety $\mathbb{P}(\Sigma_{\mathbf{a}})$ following \cite[Example~2.2]{CHSW11}. Let $[x_{1j}:x_{2j}:x_{3j}]$ be the coordinates of the $j$-th factor of $(\mathbb{P}_K^2)^3$ and let $I_2$ be the ideal of generated by the $2\times2$ minors of the matrix
$$ \begin{pmatrix}
    x_{11} & tx_{12} & x_{13}\\
    x_{21} & x_{22} & t x_{23}\\
    x_{31} & x_{32} & x_{33}
\end{pmatrix}.$$
The Mustafin variety $\PP(\Sigma_{\mathbf{a}})$ is the closed subscheme of $ \PP(L_1)\times_R\PP(L_2)\times_R\PP(L_3)$ associated to the ideal $I_2\cap R[x_{ij}\mid i,j=1,2,3]$. The special fiber $\mathbb{P}(\Sigma_{\mathbf{a}})_\Bbbk$ is the subscheme of $(\PP_\Bbbk^2)^3$ defined by the initial ideal of $(I_2:\langle t\rangle)^\infty$ with respect to $t$ (\cite[Section~2.4]{MS15}). Using \texttt{OSCAR} \cite{OSCAR,OSCAR-book}, we compute its primary decomposition:
\begin{align*}
\langle x_{22},x_{32},&x_{13},x_{33}\rangle \cap \langle x_{11},x_{33},x_{13},x_{21}x_{32}-x_{31}x_{22} \rangle\,\cap  \\
&\langle x_{21},x_{32},x_{22},x_{11}x_{33}-x_{31}x_{13} \rangle\cap \langle x_{21},x_{11},x_{22},x_{13}\rangle\cap R[x_{ij}].
\end{align*}
Let $T,F_1,F_2,Q$ be the components of $\mathbb{P}(\Sigma_{\mathbf{a}})_\Bbbk$ corresponding to the above ideals, respectively. Then, $T\cong\mathbb{P}^2$, $F_1,F_2\cong\mathbb{F}_1$, and $Q\cong\mathbb{P}^1\times\mathbb{P}^1$. The surfaces are glued as showed in Figure~\ref{fig:example_marked_conic}, where we also give a parametrization for these surfaces. From the classification of the Mustafin degenerations parametrized by $\overline{\mathbf{S}}_{\mathrm{GP}}(3,6)^\nu$ in Proposition~\ref{prop:strataofselfdualcompactification}, we can conclude that $(\mathbb{P}(\Sigma_{\mathbf{a}});\overline{a}_1(0),\ldots,\overline{a}_6(0))$ is of type \#2. In particular, three limit marked points lie on $F_1$ and the remaining three lie on $F_2$.

Next, we determine which three markings are contained in $F_1$ and $F_2$. From its defining equations, we have that $F_1$ (resp. $F_2$) dominates the $\mathbb{P}^2$ component of coordinates $[x_{12}:x_{22}:x_{32}]$ (resp. $[x_{13}:x_{23}:x_{33}]$), hence $F_1$ corresponds to $[L_2]$ and $F_2$ corresponds to $[L_3]$. By taking into account the quadruples being stabilized by $[L_2]$ and $[L_3]$, we can conclude that $\overline{a}_2(0),\overline{a}_3(0),\overline{a}_6(0)$ lie on $F_1$ while $\overline{a}_1(0),\overline{a}_4(0),\overline{a}_5(0)$ lie on $F_2$.

Finally, we compute the limit in $\mathbb{P}(\Sigma_{\mathbf{a}})_\Bbbk$ of the one-parameter family of conics $x_1x_2-tx_3^2=0$. To do this, we add to $I_2$ the ideal $\langle x_{1j}x_{2j}-tx_{3j}^2\mid j=1,2,3\rangle$ of the diagonally embedded conic, and then saturate and take the initial ideal both with respect to $t$. Using \texttt{OSCAR}, we obtain the following primary decomposition after removing the irrelevant ideal:
\begin{align*}
\langle x_{33}, x_{13}, x_{12}x_{22} - &x_{32}^2, x_{21}x_{32} - x_{31}x_{22}, x_{21}x_{12} - x_{31}x_{32}, x_{11}\rangle\,\cap\\
&\langle x_{13}x_{23} - x_{33}^2, x_{32}, x_{22}, x_{21}, x_{11}x_{33} - x_{31}x_{13}, x_{11}x_{23} - x_{31}x_{33} \rangle.
\end{align*}
The associated closed subset of $\PP(L_1)_\Bbbk\times\PP(L_2)_\Bbbk\times \PP(L_3)_\Bbbk$ in parametric form is given by
\begin{align*}
\{([0:x_{21}:x_{31}],&[x_{31}^2:x_{21}^2:x_{21}x_{31}],[0:1:0]) \mid [x_{21}:x_{31}]\in\PP^1\}\,\cup\\
&\{([x_{11}:0:x_{31}],[1:0:0],[x_{11}^2:x_{31}^2:x_{11}x_{31}])  \mid [x_{11}:x_{31}]\in\PP^1 \}.
\end{align*}
Hence, the limit $\overline{\mathcal{C}}_0$ has two irreducible components contained in $F_1$ and $F_2$, respectively, and with unique intersection point the quadruple point of the surface $([0:0:1],[1:0:0],[0:1:0])$. We sketch this pointed curve on the degenerate surface in Figure~\ref{fig:example_marked_conic} (in general, such pictures become quickly convoluted, which is why we represented them ``outside'' the surface in Figures~\ref{fig:conic-in-case-5}--\ref{fig:conic-in-case-23} and Tables~\ref{tbl:GPdegenerationsself-dual-1} and \ref{tbl:GPdegenerationsself-dual-2}.) A Jupyter notebook showcasing the code used for this specific example is available on GitHub:
\begin{center}
{
\small
\url{https://github.com/lucaschaffler/SelfDuality_MustafinDegenerations}
}
\end{center}
\end{example}

\begin{figure}
    \begin{center}
\begin{tikzpicture}[scale=0.55]
	\draw[line width=1pt] (0,6) -- (6,6);
	\draw[line width=1pt] (6,6) -- (6,0);
	\draw[line width=1pt] (6,0) -- (0,6);
	\draw[line width=1pt] (4.5,6) -- (4.5,1.5);
	\draw[line width=1pt] (1.5,4.5) -- (6,4.5);
	\fill[gray!50,nearly transparent] (1.5,4.5) -- (4.5,4.5) -- (4.5,1.5) -- cycle;
	\fill[gray!50,nearly transparent] (1.5,4.5) -- (4.5,4.5) -- (4.5,6) -- (0,6) -- cycle;
	\fill[gray!50,nearly transparent] (4.5,1.5) -- (4.5,4.5) -- (6,4.5) -- (6,0) -- cycle;
						
	\node at (-0.4,6) {$2$};
	\node at (1.8,5.1) {$3$};
	\node at (3.0,5.1) {$6$};
	\node at (6.4,0) {$1$};
	\node at (5.1,1.6) {$4$};
	\node at (5.2,3.2) {$5$};
						
	\node at (-7.25,5) {$L_2 : \big\{([0:x_{21}:x_{31}],[x_{12}:x_{22}:x_{32}],[0:1:0])\big\}$};
	\node at (-5.2,3.25) {$L_1 :\big\{([x_{11}:x_{21}:x_{31}],[1:0:0],[0:1:0])\big\}$};
	\node at (-3.5,0.75) {$L_3:\big\{([x_{11}:0:x_{31}],[1:0:0],[x_{13}:x_{23}:x_{33}])\big\}$};
	\node at (-3.5,2) {$\big\{([0:0:1],[x_{12}:0:x_{32}],[0:x_{23}:x_{33}])\big\}$};
						
	\draw[lightgray, dashed,line width=0.5pt] (2.4,4.8) -- (0.25,4.8);
	\draw[lightgray, dashed,line width=0.5pt] (1.7,3.05) -- (3.4,3.05); 
	\draw[lightgray, dashed,line width=0.5pt] (4,0.55) -- (5.65,0.55);
						
	\draw[lightgray, dashed,line width=0.5pt] (3.2,1.8) to[out=45, in=245] (5.25,5.25);
						
	\draw[magenta,line width=1.5pt] (0,6) to[out=355, in=180] (1.5,5.5) to[out=45, in=160] (3,5.6) to[out=350, in=180] (4.5,4.5) to[out=90, in=355] (4,5);
	\draw[magenta,line width=1.5pt] (5,4) to[out=160, in=270] (4.5,4.5) to[out=0, in=75] (5.6,3) to[out=255, in=115] (5.5,1.5)  to[out=270, in=90] (6,0);
						
	\fill (6,0) circle (4pt);
	\fill (0,6) circle (4pt);
	\fill (5.5,1.5) circle (4pt);
	\fill (5.6,3) circle (4pt);
	\fill (1.5,5.5) circle (4pt);
	\fill (3,5.6) circle (4pt);
\end{tikzpicture}
\end{center}
\caption{The limit $(\mathbb{P}(\Sigma_{\mathbf{a}}),\overline{\mathcal{C}}_0;\overline{a}_1(0),\ldots,\overline{a}_6(0))$ in Example~\ref{ex:deg_conic}. The two components of the curve $\overline{\mathcal{C}}_0$ intersect transversely at the quadruple point of the surface.}
\label{fig:example_marked_conic}
\end{figure}

\section{\texorpdfstring{$\overline{\mathbf{S}}_{\mathrm{GP}}(3,6)$}{barSGP(3,6)} and the global family of \texorpdfstring{$6$}{6}-pointed degenerate conics}
\label{sec:construction-of-the-global-family-of-degenerate-conics}

Let us consider the family $\overline{\mathcal{M}}(3,n)\rightarrow\overline{\mathbf{X}}_{\mathrm{GP}}(3,n)$ of Mustafin degenerations of $\mathbb{P}^2$ with $n$ sections as introduced in Theorem~\ref{thm:main-on-XbarGP(3,n)}. For $n=5$, we showed in Section~\ref{sec:Mustafin-comp-for-5-points} the existence of an additional family of conics $\overline{\mathcal{D}}\rightarrow\overline{\mathbf{X}}_{\mathrm{GP}}(3,5)$ with sections $s_1,\ldots,s_5$ such that the fibers of $(\overline{\mathcal{M}}(3,5);\overline{\mathcal{D}})\rightarrow\overline{\mathbf{X}}_{\mathrm{GP}}(3,5)$ are the $5$-pointed Mustafin degenerations of $\mathbb{P}^2$ together with a degenerate conic interpolating them. A similar construction should hold for $\overline{\mathbf{S}}_{\mathrm{GP}}(3,6)$: The goal of this section is to realize a proper flat family of $6$-pointed degenerate conics over the normalization of $\overline{\mathbf{S}}_{\mathrm{GP}}(3,6)$.

\begin{theorem}
\label{thm:main-theorem-about-family-of-degenerate-conics}
There exists a proper flat family of surfaces $\overline{\mathcal{M}}^{\mathrm{sd}}(3,6)\rightarrow\overline{\mathbf{S}}_{\mathrm{GP}}(3,6)^\nu$ with a divisor $\overline{\mathcal{D}}^{\mathrm{sd}}(3,6)\subseteq\overline{\mathcal{M}}^{\mathrm{sd}}(3,6)$ relative over $\overline{\mathbf{S}}_{\mathrm{GP}}(3,6)^\nu$ together with disjoint sections $s_i^{\mathrm{sd}}\colon\overline{\mathbf{S}}_{\mathrm{GP}}(3,6)^\nu\rightarrow\overline{\mathcal{D}}^{\mathrm{sd}}(3,6)$, $i=1,\ldots,6$, such that, over a point $x$, the fiber
\[
(\overline{\mathcal{M}}^{\mathrm{sd}}(3,6)_x,\overline{\mathcal{D}}^{\mathrm{sd}}(3,6)_x;s_1^{\mathrm{sd}}(x),\ldots,s_6^{\mathrm{sd}}(x))=(X,C;p_1,\ldots,p_6)
\]
is described as follows:

\begin{itemize}

\item If $x\in\mathbf{S}(3,6)$, then $X=\mathbb{P}^2$ and $C$ is the smooth conic passing through $p_1,\ldots,p_6$;

\item Otherwise, $(X;p_1,\ldots,p_6)$ is a $6$-pointed Mustafin degeneration of $\mathbb{P}^2$ and $C\subseteq X$ is a reduced curve interpolating $p_1,\ldots,p_6$ such that $(C;p_1,\ldots,p_6)$ is a stable $6$-pointed rational curve. The possibilities for $(X,C;p_1,\ldots,p_6)$ are listed in Tables~\ref{tbl:GPdegenerationsself-dual-1} and \ref{tbl:GPdegenerationsself-dual-2}.
\end{itemize}
\end{theorem}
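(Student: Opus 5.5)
We build the family by pulling back and taking closures, and we use a Hilbert-scheme argument to get flatness. Let $\nu\colon\overline{\mathbf{S}}_{\mathrm{GP}}(3,6)^\nu\rightarrow\overline{\mathbf{X}}_{\mathrm{GP}}(3,6)$ be the normalization composed with the inclusion. Define $\overline{\mathcal{M}}^{\mathrm{sd}}(3,6):=\nu^*\overline{\mathcal{M}}(3,6)$ with the pulled back sections $s_i^{\mathrm{sd}}:=\nu^*s_i$. Properness, flatness and disjointness of the sections are then inherited from Theorem~\ref{thm:main-on-XbarGP(3,n)}. Over the interior $\mathbf{S}(3,6)$ there is a relative conic $\mathcal{C}\subseteq\overline{\mathcal{M}}^{\mathrm{sd}}(3,6)|_{\mathbf{S}(3,6)}=\mathbf{S}(3,6)\times\mathbb{P}^2$ (up to the universal $\mathbb{P}^2$-bundle): it is cut out by the unique quadric through the six sections, which exists because $\psi=0$. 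The first candidate for $\overline{\mathcal{D}}^{\mathrm{sd}}(3,6)$ is the closure of $\mathcal{C}$. Flatness of this closure over a normal (not necessarily smooth) base is not automatic, so instead we proceed as follows.

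\emph{Step 1 (multigraded Hilbert scheme).} Recall from \cite{ST22} that $\overline{\mathbf{X}}_{\mathrm{GP}}(3,6)$ is built as the closure of $\mathbf{B}(3,6)$ in a multigraded Hilbert scheme $H$ parametrizing the ideals of the Mustafin joins in products of $\mathbb{P}^2$'s, with one factor for each relevant lattice configuration. Consider the multigraded Hilbert scheme $H'$ of pairs of nested ideals $I_X\subseteq I_C$, or simply of the ideals $I_C$ with the Hilbert function of the diagonal conic. Map $\mathbf{S}(3,6)\rightarrow H\times H'$ by sending a configuration to the pair (ideal of the Mustafin variety, ideal of the diagonally embedded conic). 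Let $\overline{\mathbf{S}}_{\mathrm{GP}}^c(3,6)$ be the closure of the image, and let $\overline{\mathcal{D}}^c\rightarrow\overline{\mathbf{S}}^c_{\mathrm{GP}}(3,6)$ be the pullback of the universal family. This family is flat by construction.

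\emph{Step 2 (comparison of the bases).} We show that the projection $\overline{\mathbf{S}}^c_{\mathrm{GP}}(3,6)\rightarrow\overline{\mathbf{S}}_{\mathrm{GP}}(3,6)$ is finite and birational; this is the content of Proposition~\ref{prop:barSGPc(3,6)-and-barSGP(3,6)-have-iso-normalizations}. Birationality holds because the map is an isomorphism over $\mathbf{S}(3,6)$. For finiteness (equivalently quasi-finiteness, since the map is proper), take an arc $\mathbf{a}$ with limit $x$. Its limit in $H'$ is the flat limit $\overline{\mathcal{C}}_0$ of Theorem~\ref{thm:1parameterfamilyconics}: by part (2) that closure is flat, and flat limits in Hilbert schemes are computed by closure. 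By part (3), $\overline{\mathcal{C}}_0$ depends only on $x$. Hence the fiber over $x$ is a single point, and by Zariski's Main Theorem the normalizations agree. We then pull $\overline{\mathcal{D}}^c$ back to $\overline{\mathbf{S}}_{\mathrm{GP}}(3,6)^\nu$ to obtain $\overline{\mathcal{D}}^{\mathrm{sd}}(3,6)$. It is flat, it is contained in $\overline{\mathcal{M}}^{\mathrm{sd}}(3,6)$ (the containment is closed and holds generically), and it contains the sections (also closed and generic). We expect this step to be the main obstacle. The delicate point is checking that every point of the closure is reached by an arc from $\mathbf{S}(3,6)$ and that the Hilbert-scheme limit literally equals $\overline{\mathcal{C}}_0$ as a subscheme, not just as a set. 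We would use the reducedness statement of Theorem~\ref{thm:1parameterfamilyconics}(1) together with the fact that the limit of ideals along an arc is the saturated initial ideal, which defines exactly the flat closure.

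\emph{Step 3 (fibers).} Over $\mathbf{S}(3,6)$ the fiber is $\mathbb{P}^2$ with the smooth conic through the six points. For a boundary point $x$, the fiber equals $\overline{\mathcal{C}}_0$ for any arc through $x$, by Step 2. Therefore the case analysis in the proof of Theorem~\ref{thm:1parameterfamilyconics}, over the types of Proposition~\ref{prop:strataofselfdualcompactification}, gives the reduced curves listed in Tables~\ref{tbl:GPdegenerationsself-dual-1} and \ref{tbl:GPdegenerationsself-dual-2}. It remains to verify, type by type, that $(C;p_1,\ldots,p_6)$ is a stable $6$-pointed rational curve, using Figures~\ref{fig:conic-in-case-5}--\ref{fig:conic-in-case-23}. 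In each case the dual graph is a tree of $\mathbb{P}^1$'s meeting at nodes. Each component carries at least three special points, and the marked points are smooth points of $C$ and distinct. For instance, in type \#20 the central component meets three others and each outer component has two markings. Arithmetic genus $0$ also follows from flatness, as a degeneration of a conic.
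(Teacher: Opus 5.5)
Your proposal is correct and follows essentially the same route as the paper. Both enlarge the ambient space by the multigraded Hilbert scheme $\mathbf{H}^{h_2}_S$ of the diagonal conic and take the closure $\overline{\mathbf{S}}^{\mathrm{c}}_{\mathrm{GP}}(3,6)$. Both then use the flatness and arc-independence in Theorem~\ref{thm:1parameterfamilyconics}, together with uniqueness of flat limits, to show that the projection to $\overline{\mathbf{S}}_{\mathrm{GP}}(3,6)$ is bijective and birational, apply Zariski's Main Theorem, and pull the universal families back to the normalization.

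The only slip is in how you recall the ambient space. $\overline{\mathbf{X}}_{\mathrm{GP}}(3,6)$ is the closure in $\overline{\mathbf{B}}(3,6)\times\mathbf{H}^{h_1}_S$, with one $\mathbb{P}^2$ factor per quadruple of indices rather than per lattice configuration. So you should close up $\mathbf{S}(3,6)$ in $\overline{\mathbf{B}}(3,6)\times\mathbf{H}^{h_1}_S\times\mathbf{H}^{h_2}_S$; otherwise the projection to $\overline{\mathbf{S}}_{\mathrm{GP}}(3,6)$ is not defined. Your explicit type-by-type stability check in Step 3 is a useful addition that the paper leaves implicit.
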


In order to prove this theorem, we first consider the general case of $n$ points in $\mathbb{P}^2$, and refine the construction of $\overline{\mathbf{X}}_{\mathrm{GP}}(3,n)$ to allow over it a family of degenerate conics containing the first five marked points.

\subsection{The construction of \texorpdfstring{$\overline{\mathbf{X}}_{\mathrm{GP}}(3,6)$}{barXGP(3,6)}}

We recall the construction of $\overline{\mathbf{X}}_{\mathrm{GP}}(3,n)$ from \cite[Section~7]{ST22}. Given $n$ linearly general points $p_1,\ldots,p_n$ in $\mathbb{P}^2$ and indices $\{i_1<\ldots<i_4\}\in\binom{[n]}{4}$, define $\psi_{i_1,\ldots,i_4}\in\mathrm{PGL}_3$ to be the unique projective linear transformation sending $p_{i_1},\ldots,p_{i_4}$ to $[1:0:0],[0:1:0],[0:0:1],[1:1:1]$, respectively. Denoting a point on $\mathbf{B}(3,n)$ as an equivalence class $[p_1,\ldots,p_n]$, we can define the embedding
\begin{align*}
\iota\colon\mathbf{B}(3,n)&\hookrightarrow\prod_{\substack{v~\textrm{ordered}\\\textrm{quintuple in}~[n]}}\mathbb{P}^2\\
[p_1,\ldots,p_n]&\mapsto(\ldots,\psi_{v_1,\ldots,v_4}(p_{v_5}),\ldots).
\end{align*}
Taking the closure of $\mathbf{B}(3,n)$ under this embedding gives a compactification $\overline{\mathbf{B}}(3,n)$. This was introduced in \cite{GP91}, where it was mistakenly thought to be a the moduli space for $n$-pointed Mustafin degenerations of $\mathbb{P}^2$: see the discussion in \cite[Section~7]{ST22}. To construct the correct compactification, we need to use multigraded Hilbert schemes.

\begin{definition}[\cite{HS04}]
For a commutative ring $k$, let $S=k[x_1,\ldots,x_n]$ be a polynomial ring with a grading by an abelian group~$A$ given by a semigroup homomorphism $\deg\colon\mathbb{N}^n\rightarrow A$. Fix a function
$$h\colon A\rightarrow\mathbb{N}.$$
The {\em multigraded Hilbert scheme} $\mathbf{H}^h_S$ parametrizes all 
ideals in $S$ homogeneous with respect to $\deg$ and with Hilbert function $h$, i.e.~such that
$$\dim_k S_a/I_a=h(a)\quad \hbox{\rm for every}\quad a\in A.$$ 
For every $k$-algebra $R$, the set of $R$-points
$\mathbf{H}^h_S(R)$ is the set of homogeneous ideals $I\subseteq R\otimes_k S$
such that $(R\otimes S)_a/I_a$ is a locally free $R$-module of rank $h(a)$  for every~$a\in A$.
The scheme $\mathbf{H}_S^h$ is quasi-projective over $k$, in fact it is
projective if $1\in S$ is the only monomial of degree~$0$.
\end{definition}

\begin{example}
\label{ex:multigraded-Hilbert-scheme-P^N}
We are interested in the multigraded Hilbert scheme of $(\mathbb{P}^2)^N$ with $N=\binom{n}{4}$. For this purpose, let
\[
S=\Bbbk[x_{ij}\mid i=1,2,3,\ j=1,\ldots,N],
\]
where $\Bbbk$ is our algebraically closed base field and $[x_{1j}:x_{2j}:x_{3j}]$ are homogeneous coordinates of the $j$-th copy of~$\mathbb{P}^2$. We view $S$ with the usual multigrading by~$\mathbb{Z}^N$. We will consider the following two Hilbert functions:

\begin{enumerate}

\item The numerical function $h_1$ is the Hilbert function of the diagonally embedded
\[
\mathbb{P}^2\hookrightarrow(\mathbb{P}^2)^N.
\]
A detailed study of this case can be found in \cite{CS10}.

\item Let $h_2$ be the Hilbert function of a conic in the diagonally embedded $\mathbb{P}^2\hookrightarrow(\mathbb{P}^2)^N$.

\end{enumerate}
\end{example}

The compactification $\overline{\mathbf{X}}_{\mathrm{GP}}(3,n)$ is constructed using $\mathbf{H}_S^{h_1}$ as follows. Consider
\begin{align*}
\Psi\colon\mathbf{B}(3,n)\times\mathbb{P}^2&\hookrightarrow\mathbf{B}(3,n)\times(\mathbb{P}^2)^N\\
([p_1,\ldots,p_n],p)&\mapsto([p_1,\ldots,p_n],(\ldots,\psi_{i_1,\ldots,i_4}(p),\ldots)).
\end{align*}
Moreover, we have the following commutative diagram:
\begin{center}
\begin{tikzpicture}[>=angle 90]
\matrix(a)[matrix of math nodes,
row sep=2em, column sep=2em,
text height=2ex, text depth=1ex]
{\mathbf{B}(3,n)\times\mathbb{P}^2&\mathbf{B}(3,n)\times(\mathbb{P}^2)^N\\
\mathbf{B}(3,n).&\\};
\path[right hook->] (a-1-1) edge node[above]{$\Psi$}(a-1-2);
\path[->] (a-1-1) edge node[]{}(a-2-1);
\path[->] (a-1-2) edge node[]{}(a-2-1);
\end{tikzpicture}
\end{center}
This induces an embedding $\iota_1\colon\mathbf{B}(3,n)\hookrightarrow\mathbf{H}_S^{h_1}$ and we consider
\[
\iota\times\iota_1\colon\mathbf{B}(3,n)\hookrightarrow\overline{\mathbf{B}}(3,n)\times\mathbf{H}_S^{h_1}.
\]
Then, $\overline{\mathbf{X}}_{\mathrm{GP}}(3,n)$ denotes the closure of $\mathbf{B}(3,n)$ under the embedding $\iota\times\iota_1$. Let $\mathrm{Hilb}_{h_1}\left((\mathbb{P}^2)^N\right)$ denote the connected component of the (usual, not multigraded) Hilbert scheme of closed subschemes in $(\mathbb{P}^2)^N$, which parametrizes the diagonally embedded $\mathbb{P}^2$ in $(\mathbb{P}^2)^N$. Thus, the family $\overline{\mathcal{M}}(3,n)\rightarrow\overline{\mathbf{X}}_{\mathrm{GP}}(3,n)$ in Theorem~\ref{thm:main-on-XbarGP(3,n)} is the pullback of the universal family over $\mathrm{Hilb}\left((\mathbb{P}^2)^N\right)$ along the composition
\[
\overline{\mathbf{X}}_{\mathrm{GP}}(3,n)\hookrightarrow\overline{\mathbf{B}}(3,n)\times\mathbf{H}_S^{h_1}\rightarrow\mathbf{H}_S^{h_1}\rightarrow\mathrm{Hilb}_{h_1}\left((\mathbb{P}^2)^N\right).
\]
(For the last morphism see \cite[Example~7.3]{ST22}.) The construction of the sections $s_1,\ldots,s_n$ is not recalled here; we refer the reader to \cite[Section~7]{ST22}.

\begin{remark}
In \cite{ST22}, the reason for using the multigraded Hilbert scheme to define $\overline{\mathbf{X}}_{\mathrm{GP}}(3,n)$ and the usual Hilbert scheme to define the family $\overline{\mathcal{M}}\rightarrow\overline{\mathbf{X}}_{\mathrm{GP}}(3,n)$ is the following. The multigraded perspective allows to easily define forgetful morphisms $\overline{\mathbf{X}}_{\mathrm{GP}}(3,n)\rightarrow\overline{\mathbf{X}}_{\mathrm{GP}}(3,n-1)$ \cite[Proposition~7.10]{ST22}. At the same time, we simply want to construct a family of projective varieties over $\overline{\mathbf{X}}_{\mathrm{GP}}(3,n)$ with no extra structure.
\end{remark}

\subsection{Refined versions of \texorpdfstring{$\overline{\mathbf{X}}_{\mathrm{GP}}(3,n)$}{barXGP(3,n)} and \texorpdfstring{$\overline{\mathbf{S}}_{\mathrm{GP}}(3,6)$}{barSGP(3,6)}}

We now twist the above construction of $\overline{\mathbf{X}}_{\mathrm{GP}}(3,n)$ from Example~\ref{ex:multigraded-Hilbert-scheme-P^N} with the Hilbert function $h_2$. Let $\mathcal{D}(3,n)\subseteq\mathbf{B}(3,n)\times\mathbb{P}^2$ be the Cartier divisor defined by $\psi(p_1,\ldots,p_5,p)=0$, with $\psi$ as in \eqref{eq:6-points-on-a-conic-equation}. As this is the condition for $p_1,\ldots,p_5,p$ to lie on a conic, the fiber of $\mathcal{D}(3,n)\rightarrow\mathbf{B}(3,n)$ over $[p_1,\ldots,p_n]$ equals the unique conic passing through $p_1,\ldots,p_5$. Let us consider the following diagram:
\begin{center}
\begin{tikzpicture}[>=angle 90]
\matrix(a)[matrix of math nodes,
row sep=2em, column sep=2em,
text height=2ex, text depth=1ex]
{\mathcal{D}(3,n)&\mathbf{B}(3,n)\times\mathbb{P}^2&\mathbf{B}(3,n)\times(\mathbb{P}^2)^N\\
\mathbf{B}(3,n).&&\\};
\path[right hook->] (a-1-1) edge node[]{}(a-1-2);
\path[right hook->] (a-1-2) edge node[above]{$\Psi$}(a-1-3);
\path[->] (a-1-1) edge node[]{}(a-2-1);
\path[->] (a-1-3) edge node[]{}(a-2-1);
\end{tikzpicture}
\end{center}
This induces a morphism $\iota_2\colon\mathbf{B}(3,n)\rightarrow\mathbf{H}_S^{h_2}$. Note that this is not injective as non-equivalent choices of $p_1,\ldots,p_5$ may yield the same conic interpolating them.

\begin{definition}
With the notation introduced above, let us consider the embedding
\[
\iota\times\iota_1\times\iota_2\colon\mathbf{B}(3,n)\hookrightarrow\overline{\mathbf{B}}(3,n)\times\mathbf{H}_S^{h_1}\times\mathbf{H}_S^{h_2}.
\]
Then, we define $\overline{\mathbf{X}}_{\mathrm{GP}}^{\mathrm{c}}(3,n)$ to be the closure of $\mathbf{B}(3,n)$ under this embedding (the superscript ``$\mathrm{c}$'' refers to ``conic''). Moreover, we denote by $\overline{\mathcal{M}}^{\mathrm{c}}(3,n)\rightarrow\overline{\mathbf{X}}_{\mathrm{GP}}^{\mathrm{c}}(3,n)$ the pullback of the universal family of surfaces on $\mathrm{Hilb}_{h_1}\left((\mathbb{P}^2)^N\right)$ and by $\overline{\mathcal{D}}^{\mathrm{c}}(3,n)\rightarrow\overline{\mathbf{X}}_{\mathrm{GP}}^{\mathrm{c}}(3,n)$ the pullback of the universal family of curves $\mathrm{Hilb}_{h_2}\left((\mathbb{P}^2)^N\right)$.
\end{definition}

\begin{lemma}
\label{lem:properties-of-c-compactification}
Let $(\overline{\mathcal{M}}^{\mathrm{c}}(3,n),\overline{\mathcal{D}}^{\mathrm{c}}(3,n))\rightarrow\overline{\mathbf{X}}_{\mathrm{GP}}^{\mathrm{c}}(3,n)$ as introduced above. The following hold:
\begin{enumerate}

\item There is a birational morphism $f\colon\overline{\mathbf{X}}_{\mathrm{GP}}^{\mathrm{c}}(3,n)\rightarrow\overline{\mathbf{X}}_{\mathrm{GP}}(3,n)$ extending the identity on the common interior $\mathbf{B}(3,n)$ and inducing the Cartesian square
\begin{center}
\begin{tikzpicture}[>=angle 90]
\matrix(a)[matrix of math nodes,
row sep=2em, column sep=2em,
text height=2ex, text depth=1ex]
{\overline{\mathcal{M}}^{\mathrm{c}}(3,n)&\overline{\mathcal{M}}(3,n)\\
\overline{\mathbf{X}}_{\mathrm{GP}}^{\mathrm{c}}(3,n)&\overline{\mathbf{X}}_{\mathrm{GP}}(3,n).\\};
\path[->] (a-1-1) edge node[]{}(a-1-2);
\path[->] (a-1-1) edge node[left]{$\pi^{\mathrm{c}}$}(a-2-1);
\path[->] (a-1-2) edge node[right]{$\pi$}(a-2-2);
\path[->] (a-2-1) edge node[above]{$f$}(a-2-2);
\end{tikzpicture}
\end{center}
Hence, also $\overline{\mathcal{M}}^{\mathrm{c}}(3,n)\rightarrow\overline{\mathbf{X}}_{\mathrm{GP}}^{\mathrm{c}}(3,n)$ is a family of Mustafin degenerations of $\mathbb{P}^2$.

\item Let $\mathbf{a}\colon\mathrm{Spec}(K)\rightarrow\mathbf{B}(3,n)$ be an arc and let us consider its unique extension $\overline{\mathbf{a}}\colon\mathrm{Spec}(R)\rightarrow\overline{\mathbf{X}}_{\mathrm{GP}}^{\mathrm{c}}(3,n)$. Then, $\overline{\mathbf{a}}^*\overline{\mathcal{M}}^{\mathrm{c}}(3,n)\cong\mathbb{P}(\Sigma_\mathbf{a})$.

\item The sections $s_1,\ldots,s_n$ of $\pi$ pullback to sections $s_1^{\mathrm{c}},\ldots,s_n^{\mathrm{c}}$ of $\pi^{\mathrm{c}}$ such that, for all $x\in\overline{\mathbf{X}}_{\mathrm{GP}}^{\mathrm{c}}(3,n)$, the points $s_1^{\mathrm{c}}(x),\ldots,s_5^{\mathrm{c}}(x)$ lie on the curve $\overline{\mathcal{D}}^{\mathrm{c}}(3,n)_x$.

\end{enumerate}
\end{lemma}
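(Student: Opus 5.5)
For (1), I will take $f$ to be the restriction to $\overline{\mathbf{X}}_{\mathrm{GP}}^{\mathrm{c}}(3,n)$ of the projection
\[
\overline{\mathbf{B}}(3,n)\times\mathbf{H}_S^{h_1}\times\mathbf{H}_S^{h_2}\rightarrow\overline{\mathbf{B}}(3,n)\times\mathbf{H}_S^{h_1}.
\]
The projection is proper and restricts to the identity on $\mathbf{B}(3,n)$, where $\iota\times\iota_1$ is the first component of $\iota\times\iota_1\times\iota_2$. Since $\overline{\mathbf{X}}_{\mathrm{GP}}^{\mathrm{c}}(3,n)$ is the closure of $\mathbf{B}(3,n)$, its image is closed and contains $\mathbf{B}(3,n)$, so it lands in (and equals) $\overline{\mathbf{X}}_{\mathrm{GP}}(3,n)$. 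This gives a birational morphism $f$ extending the identity.

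The Cartesian square should then be essentially tautological. Both $\overline{\mathcal{M}}^{\mathrm{c}}(3,n)$ and $\overline{\mathcal{M}}(3,n)$ are pullbacks of the same universal family over $\mathrm{Hilb}_{h_1}((\mathbb{P}^2)^N)$. The classifying map for $\overline{\mathcal{M}}^{\mathrm{c}}$ is the composition
\[
\overline{\mathbf{X}}_{\mathrm{GP}}^{\mathrm{c}}(3,n)\rightarrow\mathbf{H}_S^{h_1}\rightarrow\mathrm{Hilb}_{h_1},
\]
and this factors through $f$ followed by the classifying map of $\overline{\mathcal{M}}$. Pullback along a composition is a composition of pullbacks, so the square is Cartesian. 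Consequently every fiber of $\pi^{\mathrm{c}}$ over $x$ is the fiber of $\pi$ over $f(x)$, hence a Mustafin degeneration.

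For (2), I will use the valuative criterion of properness for $f$. Given an arc $\mathbf{a}$ with extension $\overline{\mathbf{a}}$ to $\overline{\mathbf{X}}_{\mathrm{GP}}^{\mathrm{c}}(3,n)$, the composite $f\circ\overline{\mathbf{a}}$ is the unique extension of $\mathbf{a}$ to $\overline{\mathbf{X}}_{\mathrm{GP}}(3,n)$. By the Cartesian square,
\[
\overline{\mathbf{a}}^*\overline{\mathcal{M}}^{\mathrm{c}}(3,n)\cong(f\circ\overline{\mathbf{a}})^*\overline{\mathcal{M}}(3,n)\cong\mathbb{P}(\Sigma_\mathbf{a}),
\]
where the last isomorphism is Theorem~\ref{thm:main-on-XbarGP(3,n)}(1).

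For (3), the sections $s_i$ of $\pi$ pull back through the Cartesian square, by the universal property of fiber products, to sections $s_i^{\mathrm{c}}=(\mathrm{id},s_i\circ f)$ of $\pi^{\mathrm{c}}$. To show $s_i^{\mathrm{c}}(x)\in\overline{\mathcal{D}}^{\mathrm{c}}(3,n)_x$ for $i\le5$, I will view both $\overline{\mathcal{D}}^{\mathrm{c}}(3,n)$ and the image of $s_i^{\mathrm{c}}$ inside $\overline{\mathbf{X}}_{\mathrm{GP}}^{\mathrm{c}}(3,n)\times(\mathbb{P}^2)^N$. First I must check that $\overline{\mathcal{D}}^{\mathrm{c}}\subseteq\overline{\mathcal{M}}^{\mathrm{c}}$ as closed subschemes of this product; this holds over $\mathbf{B}(3,n)$ because the conic lies in the diagonal plane, and it extends by flatness of $\overline{\mathcal{D}}^{\mathrm{c}}$. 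Indeed, the ideal-containment condition in the Hilbert schemes is a closed condition, and the interior is dense; more precisely, the flat family $\overline{\mathcal{D}}^{\mathrm{c}}$ is the closure of its restriction to the dense open interior. Over $\mathbf{B}(3,n)$, the point $s_i(x)$ is $\Psi(p_i)$, and $p_i$ lies on the conic through $p_1,\ldots,p_5$, so the image of $s_i^{\mathrm{c}}$ is contained in $\overline{\mathcal{D}}^{\mathrm{c}}$ there. The locus where a section lies in a closed subscheme is closed in the base, namely $(s_i^{\mathrm{c}})^{-1}(\overline{\mathcal{D}}^{\mathrm{c}})$. This locus contains the dense open $\mathbf{B}(3,n)$, so it is everything, and the containment holds set-theoretically on all of $\overline{\mathbf{X}}_{\mathrm{GP}}^{\mathrm{c}}(3,n)$, which is what (3) requires.

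The main subtlety I expect is this closedness argument, specifically making sure that $s_i^{\mathrm{c}}$ is a morphism into the same ambient product as $\overline{\mathcal{D}}^{\mathrm{c}}$ and that the reducedness of $\overline{\mathbf{X}}_{\mathrm{GP}}^{\mathrm{c}}(3,n)$, which holds because it is defined as a closure, is used so that density suffices. The other parts are formal.
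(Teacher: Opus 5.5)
Your proposal is correct and follows the paper's proof essentially step by step: $f$ is the restriction of the projection forgetting the $\mathbf{H}_S^{h_2}$ factor, the Cartesian square comes from the equality of the two classifying maps to $\mathbf{H}_S^{h_1}$, part (2) follows by composing $\overline{\mathbf{a}}$ with $f$ and invoking Theorem~\ref{thm:main-on-XbarGP(3,n)}(1), and part (3) is a closedness/density argument that makes precise the paper's ``by continuity''. Your preliminary check that $\overline{\mathcal{D}}^{\mathrm{c}}(3,n)\subseteq\overline{\mathcal{M}}^{\mathrm{c}}(3,n)$ is harmless but unnecessary, since $(s_i^{\mathrm{c}})^{-1}(\overline{\mathcal{D}}^{\mathrm{c}}(3,n))$ is already closed once both are viewed inside $\overline{\mathbf{X}}_{\mathrm{GP}}^{\mathrm{c}}(3,n)\times(\mathbb{P}^2)^N$.
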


\begin{proof}
The birational morphism $f$ is obtained by restricting to $\overline{\mathbf{X}}_{\mathrm{GP}}^{\mathrm{c}}(3,n)$ the projection
\[
\overline{\mathbf{B}}(3,n)\times\mathbf{H}_S^{h_1}\times\mathbf{H}_S^{h_2}\rightarrow\overline{\mathbf{B}}(3,n)\times\mathbf{H}_S^{h_1}.
\]
Since the compositions $c_1\colon\overline{\mathbf{X}}_{\mathrm{GP}}^{\mathrm{c}}(3,n)\hookrightarrow\overline{\mathbf{B}}(3,n)\times\mathbf{H}_S^{h_1}\times\mathbf{H}^{h_2}\rightarrow\mathbf{H}_S^{h_1}$ and $c_2\colon\overline{\mathbf{X}}_{\mathrm{GP}}^{\mathrm{c}}(3,n)\xrightarrow{f}\overline{\mathbf{X}}_{\mathrm{GP}}(3,n)\hookrightarrow\overline{\mathbf{B}}(3,n)\times\mathbf{H}_S^{h_1}\rightarrow\mathbf{H}_S^{h_1}$ are equal, then
\[
\overline{\mathcal{M}}^{\mathrm{c}}(3,n)=c_1^*\mathcal{H}_1=c_2^*\mathcal{H}_1=f^*\overline{\mathcal{M}}(3,n).
\]
The isomorphism $\overline{\mathbf{a}}^*\overline{\mathcal{M}}^{\mathrm{c}}(3,n)\cong\mathbb{P}(\Sigma_\mathbf{a})$ is a formal consequence of the above equality combined with Theorem~\ref{thm:main-on-XbarGP(3,n)}~(1). Finally, the last statement about $s_1^{\mathrm{c}}(x),\ldots,s_5^{\mathrm{c}}(x)$ lying on $\overline{\mathcal{D}}^{\mathrm{c}}(3,n)_x$ follows by continuity after observing that, for all $i\in[n]$, $s_i(x)=s_i^{\mathrm{c}}(x)$ for all $x\in\mathbf{B}(3,n)$ and $s_1(x),\ldots,s_5(x)\in\mathcal{D}(3,n)$ by definition of $\mathcal{D}(3,n)$.
\end{proof}

\begin{remark}
Although we will not need it, we point out that $\overline{\mathbf{X}}_{\mathrm{GP}}(3,5)\cong\overline{\mathbf{X}}_{\mathrm{GP}}^{\mathrm{c}}(3,5)$. Under this isomorphism, the family of curves $\overline{\mathcal{D}}^{\mathrm{c}}(3,5)\rightarrow\overline{\mathbf{X}}_{\mathrm{GP}}^{\mathrm{c}}(3,5)$ corresponds to $\overline{\mathcal{D}}\rightarrow\overline{\mathbf{X}}_{\mathrm{GP}}(3,5)$ constructed in Section~\ref{sec:Mustafin-comp-for-5-points}. The reason is that the latter family, by construction, fits in the following diagram:
\begin{center}
\begin{tikzpicture}[>=angle 90]
\matrix(a)[matrix of math nodes,
row sep=2em, column sep=2em,
text height=2ex, text depth=1ex]
{\overline{\mathcal{D}}&\overline{\mathcal{P}}&\overline{\mathbf{X}}_{\mathrm{GP}}(3,n)\times(\mathbb{P}^2)^{\binom{5}{4}}\\
\overline{\mathbf{X}}_{\mathrm{GP}}(3,n).&&\\};
\path[right hook->] (a-1-1) edge node[]{}(a-1-2);
\path[right hook->] (a-1-2) edge node[]{}(a-1-3);
\path[->] (a-1-1) edge node[]{}(a-2-1);
\path[->] (a-1-3) edge node[]{}(a-2-1);
\end{tikzpicture}
\end{center}
(See \cite[Page~17033]{ST22}, where also the identification $\overline{\mathbf{X}}_{\mathrm{GP}}(3,5)\cong\overline{\mathrm{M}}_{0,5}$ is used.) This gives a morphism $\overline{\mathbf{X}}_{\mathrm{GP}}(3,5)\rightarrow\mathbf{H}_S^{h_2}$, and hence a morphism $\overline{\mathbf{X}}_{\mathrm{GP}}(3,5)\rightarrow\overline{\mathbf{X}}_{\mathrm{GP}}^{\mathrm{c}}(3,5)$ which is the inverse of $f\colon\overline{\mathbf{X}}_{\mathrm{GP}}^{\mathrm{c}}(3,5)\rightarrow\overline{\mathbf{X}}_{\mathrm{GP}}(3,5)$ as they are both the identity on the interior.
\end{remark}

Next, we define an (a priori) enhanced compactification of $\mathbf{S}(3,6)$ that also comes equipped with a family of curves generically giving the unique conic passing through the six marked points.

\begin{definition}
\label{def:families-of-surfaces-and-curves-on-the-enhanced-compactification}
Let $\overline{\mathbf{S}}_{\mathrm{GP}}^{\mathrm{c}}(3,6)$ be the closure of $\mathbf{S}(3,6)\subseteq\mathbf{X}(3,6)\subseteq\overline{\mathbf{X}}_{\mathrm{GP}}^{\mathrm{c}}(3,6)$. We denote by $(\overline{\mathcal{M}}^{\mathrm{c},\mathrm{sd}}(3,n),\overline{\mathcal{D}}^{\mathrm{c},\mathrm{sd}}(3,n))\rightarrow\overline{\mathbf{S}}_{\mathrm{GP}}^{\mathrm{c}}(3,n)$ with the sections $s_1^{\mathrm{c},\mathrm{sd}},\ldots,s_6^{\mathrm{c},\mathrm{sd}}$ the restriction of the families $(\overline{\mathcal{M}}^{\mathrm{c}}(3,n),\overline{\mathcal{D}}^{\mathrm{c}}(3,n))\rightarrow\overline{\mathbf{X}}_{\mathrm{GP}}^{\mathrm{c}}(3,n)$ and the sections $s_1^{\mathrm{c}},\ldots,s_6^{\mathrm{c}}$ to $\overline{\mathbf{S}}_{\mathrm{GP}}^{\mathrm{c}}(3,n)$. Hence, the objects parametrized by $\overline{\mathbf{S}}_{\mathrm{GP}}^{\mathrm{c}}(3,6)$ are of the form $(X,C;p_1,\ldots,p_6)$, where $(X;p_1,\ldots,p_6)$ is a $6$-pointed Mustafin degenerations of $\mathbb{P}^2$, together with a curve $C$ passing through $p_1,\ldots,p_6$. We call $C$ a \emph{degenerate conic}.
\end{definition}

\subsection{Proof of the main result}

\begin{proposition}
\label{prop:barSGPc(3,6)-and-barSGP(3,6)-have-iso-normalizations}
The compactifications $\overline{\mathbf{S}}_{\mathrm{GP}}^{\mathrm{c}}(3,6)$ and $\overline{\mathbf{S}}_{\mathrm{GP}}(3,6)$ have isomorphic normalizations.
\end{proposition}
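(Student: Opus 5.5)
The plan is to show that the restriction $f|\colon\overline{\mathbf{S}}^{\mathrm{c}}_{\mathrm{GP}}(3,6)\rightarrow\overline{\mathbf{S}}_{\mathrm{GP}}(3,6)$ of the birational morphism $f$ of Lemma~\ref{lem:properties-of-c-compactification} is proper, birational and quasi-finite. Zariski's Main Theorem then implies it induces an isomorphism of normalizations, exactly as in the proof of Theorem~\ref{thm:SGP36}.

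First, $f$ is a morphism of projective varieties that is the identity on $\mathbf{B}(3,6)$. By continuity it maps the closure of $\mathbf{S}(3,6)$ in $\overline{\mathbf{X}}^{\mathrm{c}}_{\mathrm{GP}}(3,6)$ onto the closure of $\mathbf{S}(3,6)$ in $\overline{\mathbf{X}}_{\mathrm{GP}}(3,6)$. The restriction is therefore proper, surjective and birational, being an isomorphism over $\mathbf{S}(3,6)$.

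The key step is to show that $f|$ has finite fibers, and I would in fact aim for injectivity on closed points. A point $y\in\overline{\mathbf{S}}^{\mathrm{c}}_{\mathrm{GP}}(3,6)$ is determined by its image in $\overline{\mathbf{B}}(3,6)\times\mathbf{H}^{h_1}_S$, which is $f(y)$, together with its image in $\mathbf{H}^{h_2}_S$, namely the ideal of the curve $\overline{\mathcal{D}}^{\mathrm{c}}(3,6)_y\subseteq(\mathbb{P}^2)^N$. Since the space is a closure of $\mathbf{S}(3,6)$ and is proper, every $y$ is the limit $\overline{\mathbf{a}}(0)$ of some arc $\mathbf{a}\colon\mathrm{Spec}(K)\rightarrow\mathbf{S}(3,6)$. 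For such an arc, the $h_2$-component of $\overline{\mathbf{a}}(0)$ is the flat limit in $\mathbf{H}^{h_2}_S$ of the family of conics through $a_1,\ldots,a_5$, which for self-dual $\mathbf{a}$ is the conic $\mathcal{C}$ through all six points. Via Lemma~\ref{lem:properties-of-c-compactification}(2), this limit sits inside $\overline{\mathbf{a}}^*\overline{\mathcal{M}}^{\mathrm{c}}(3,6)\cong\mathbb{P}(\Sigma_{\mathbf{a}})$, embedded in $(\mathbb{P}^2)^N_R$. Its central fiber is the flat limit of $\mathcal{C}$ in $\mathbb{P}(\Sigma_\mathbf{a})$, i.e. the curve $\overline{\mathcal{C}}_0$ of Theorem~\ref{thm:1parameterfamilyconics}. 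Here one uses that the closure $\overline{\mathcal{C}}$ is flat by part (2) of that theorem, so the flat limit coincides with the closure.

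By Theorem~\ref{thm:1parameterfamilyconics}(3), $\overline{\mathcal{C}}_0$ depends only on $x=f(y)$ and not on the arc. Hence two points $y,y'$ with $f(y)=f(y')=x$ have the same $h_2$-component as subschemes. It remains to check that the same subscheme determines the same multigraded ideal. This follows because the multigraded point is the saturated ideal of the flat limit, which the Hilbert function $h_2$ together with the subscheme should pin down. If this last point is delicate, it suffices for quasi-finiteness to note that the map $\mathbf{H}^{h_2}_S\rightarrow\mathrm{Hilb}_{h_2}$ has finite fibers over such points, or simply that the fiber of $f|$ over $x$ maps injectively into a single point of $\mathrm{Hilb}_{h_2}((\mathbb{P}^2)^N)$.

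I expect the main obstacle to be this last comparison between the multigraded Hilbert scheme point and the actual subscheme. There is also the subtlety that a point $x$ might be reached by arcs along which the multigraded limit ideals differ by non-saturated embedded data, while the subscheme limit is the same. My first attempt would be to argue that both limits are computed as saturation of the $t$-saturated ideal, as in Example~\ref{ex:deg_conic}, so that they agree degree-wise. Failing that, I would settle for finiteness of fibers, which already suffices for Zariski's Main Theorem to give a finite birational map and hence isomorphic normalizations.
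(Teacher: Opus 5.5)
Your proposal follows the paper's proof. Restrict $f$ to $\overline{\mathbf{S}}^{\mathrm{c}}_{\mathrm{GP}}(3,6)\rightarrow\overline{\mathbf{S}}_{\mathrm{GP}}(3,6)$, then use Theorem~\ref{thm:1parameterfamilyconics} (flatness of $\overline{\mathcal{C}}$, so that it is the flat limit, and arc-independence of $\overline{\mathcal{C}}_0$) to get injectivity, and conclude by Zariski's Main Theorem.

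The paper does not separately argue the comparison you flag between the $\mathbf{H}^{h_2}_S$-point and the limit subscheme. It simply treats a point of $\overline{\mathbf{S}}^{\mathrm{c}}_{\mathrm{GP}}(3,6)$ as determined by the tuple $(X,C;p_1,\ldots,p_6)$. Your fallback claim that $\mathbf{H}^{h_2}_S\rightarrow\mathrm{Hilb}_{h_2}$ has finite fibres is unjustified, but you do not need it to match the paper's argument.
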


\begin{proof}
The birational morphism $f\colon\overline{\mathbf{X}}_{\mathrm{GP}}^{\mathrm{c}}(3,6)\rightarrow\overline{\mathbf{X}}_{\mathrm{GP}}(3,6)$ from Lemma~\ref{lem:properties-of-c-compactification}~(1) restricts to a birational morphism $f^{\mathrm{sd}}\colon\overline{\mathbf{S}}_{\mathrm{GP}}^{\mathrm{c}}(3,6)\rightarrow\overline{\mathbf{S}}_{\mathrm{GP}}(3,6)$ which is the identity on the interior $\mathbf{S}(3,6)$. Let us prove that $f^{\mathrm{sd}}$ is injective.

For this purpose, let $y\in\overline{\mathbf{S}}_{\mathrm{GP}}(3,6)$ and denote by $(X;p_1,\ldots,p_6)$ the $6$-pointed Mustafin degenerations of $\mathbb{P}^2$ parametrized by it. Let us consider an arc $\mathbf{a}\colon\mathrm{Spec}(K)\rightarrow\mathbf{S}_{\mathrm{GP}}(3,6)$ such that  its extension $\overline{\mathbf{a}}\colon\mathrm{Spec}(R)\rightarrow\overline{\mathbf{S}}_{\mathrm{GP}}(3,6)$ satisfies $\overline{\mathbf{a}}(0)=y$. In particular, we have that
\[
(X;p_1,\ldots,p_6)\cong(\mathbb{P}(\Sigma_{\mathbf{a}})_\Bbbk;\overline{a}_1(0),\ldots,\overline{a}_6(0)).
\]
Let $\mathcal{C}\subseteq\mathbb{P}(\Sigma_{\mathbf{a}})$ be the one parameter family of smooth conics such that, for any $t\neq0$, $a_1(t),\ldots,a_6(t)\in\mathcal{C}_t$. Then, by Theorem~\ref{thm:1parameterfamilyconics} we have that $\overline{\mathcal{C}}$ is flat over $\mathrm{Spec}(R)$. By \cite[Chapter~III, Proposition~9.8]{Har77}, this implies that $\overline{\mathcal{C}}=\overline{\mathbf{a}}^*\overline{\mathcal{D}}^{\mathrm{c},\mathrm{sd}}(3,6)$ because they are both flat one-parameter families in $\mathbb{P}(\Sigma_{\mathbf{a}})$ which are equal over $\mathrm{Spec}(K)$, where they both give the unique conic passing through the six linearly general marked points.

Hence, the isomorphism class of $(\mathbb{P}(\Sigma_{\mathbf{a}})_\Bbbk,\overline{\mathcal{C}}_0;\overline{a}_1(0),\ldots,\overline{a}_6(0))$ is parametrized by a point $x\in\overline{\mathbf{S}}_{\mathrm{GP}}^{\mathrm{c}}(3,6)$. Moreover, as $f^{\mathrm{sd}}$ forgets the datum of the curve $\overline{\mathcal{C}}_0$, it holds that $f^{\mathrm{sd}}(x)=y$. The point $x$ is the unique point in $\overline{\mathbf{S}}_{\mathrm{GP}}^{\mathrm{c}}(3,6)$ with this property because again by Theorem~\ref{thm:1parameterfamilyconics} the isomorphim class of the tuple $(\mathbb{P}(\Sigma_{\mathbf{a}})_\Bbbk,\overline{\mathcal{C}}_0;\overline{a}_1(0),\ldots,\overline{a}_6(0))$ is independent from the choice of arc $\mathbf{a}$ and only depends on $y$ (in other words, there is not an exceptional locus of $f^{\mathrm{sd}}$ lying above $y$). This shows that $f^{\mathrm{sd}}$ is injective.

In conclusion, as $f^{\mathrm{sd}}$ is a bijective birational morphism, the theorem follows by Zariski's Main Theorem.
\end{proof}

\begin{proof}[Proof of Theorem~\ref{thm:main-theorem-about-family-of-degenerate-conics}]
Let us consider $(\overline{\mathcal{M}}^{\mathrm{c},\mathrm{sd}}(3,6),\overline{\mathcal{D}}^{\mathrm{c},\mathrm{sd}}(3,6))\rightarrow\overline{\mathbf{S}}_{\mathrm{GP}}^{\mathrm{c}}(3,6)$ with the sections $s_1^{\mathrm{c},\mathrm{sd}},\ldots,s_6^{\mathrm{c},\mathrm{sd}}$ from Definition~\ref{def:families-of-surfaces-and-curves-on-the-enhanced-compactification} and the isomorphism $\overline{\mathbf{S}}_{\mathrm{GP}}(3,6)^\nu\cong\overline{\mathbf{S}}^{\mathrm{c}}(3,6)^\nu$ guaranteed by Proposition~\ref{prop:barSGPc(3,6)-and-barSGP(3,6)-have-iso-normalizations}. Then, we define $(\overline{\mathcal{M}}^{\mathrm{sd}}(3,6),\overline{\mathcal{D}}^{\mathrm{sd}}(3,6))\rightarrow\overline{\mathbf{S}}_{\mathrm{GP}}(3,6)^{\nu}$ to be the family of $6$-pointed Mustafin degenerations of $\mathbb{P}^2$ with degenerate interpolating conic obtained via the following Cartesian square:
\begin{center}
\begin{tikzpicture}[>=angle 90]
\matrix(a)[matrix of math nodes,
row sep=2em, column sep=2em,
text height=2ex, text depth=1ex]
{(\overline{\mathcal{M}}^{\mathrm{sd}}(3,6),\overline{\mathcal{D}}^{\mathrm{sd}}(3,6))&&(\overline{\mathcal{M}}^{\mathrm{c},\mathrm{sd}}(3,6),\overline{\mathcal{D}}^{\mathrm{c},\mathrm{sd}}(3,6))\\
\overline{\mathbf{S}}_{\mathrm{GP}}(3,6)^{\nu}&\overline{\mathbf{S}}_{\mathrm{GP}}^{\mathrm{c}}(3,6)^{\nu}&\overline{\mathbf{S}}_{\mathrm{GP}}^{\mathrm{c}}(3,6).\\};
\path[->] (a-1-1) edge node[]{}(a-1-3);
\path[->] (a-1-1) edge node[]{}(a-2-1);
\path[->] (a-1-3) edge node[]{}(a-2-3);
\path[->] (a-2-1) edge node[above]{$\cong$}(a-2-2);
\path[->] (a-2-2) edge node[above]{}(a-2-3);
\end{tikzpicture}
\end{center}
We let $s_1^{\mathrm{sd}},\ldots,s_6^{\mathrm{sd}}$ be the sections of $\overline{\mathcal{M}}^{\mathrm{sd}}(3,6)\rightarrow\overline{\mathbf{S}}_{\mathrm{GP}}(3,6)^{\nu}$, which lie on $\overline{\mathcal{D}}^{\mathrm{sd}}(3,6)$, obtained by pulling back $s_1^{\mathrm{c},\mathrm{sd}},\ldots,s_6^{\mathrm{c},\mathrm{sd}}$ in Definition~\ref{def:families-of-surfaces-and-curves-on-the-enhanced-compactification}. By construction, $(\overline{\mathcal{M}}^{\mathrm{sd}}(3,6),\overline{\mathcal{D}}^{\mathrm{sd}}(3,6))\rightarrow\overline{\mathbf{S}}_{\mathrm{GP}}(3,6)^{\nu}$ with the sections $s_1^{\mathrm{sd}},\ldots,s_6^{\mathrm{sd}}$ have the required properties.

Finally, if $x\in\overline{\mathbf{S}}_{\mathrm{GP}}(3,6)\setminus\mathbf{S}(3,6)$ parametrizes $(X,C;p_1,\ldots,p_6)$, then the possibilities for it in Tables~\ref{tbl:GPdegenerationsself-dual-1} and \ref{tbl:GPdegenerationsself-dual-2} are obtained by combining Proposition~\ref{prop:strataofselfdualcompactification}, which gives the type of $(X;p_1,\ldots,p_6)$, together with Theorem~\ref{thm:1parameterfamilyconics}, which gives the degenerate conic $C$ after we choose an arc $\mathbf{a}\colon\mathrm{Spec}(K)\rightarrow\mathbf{S}(3,6)$ with limit $x$. In Table~\ref{tbl:GPdegenerationsself-dual-2}, the reason why the pointed curve $(C;p_1,\ldots,p_6)$ generically parametrized by the strata \#12.1, \#20.2, \#20.3, \#20.6, \#23.1 are combinatorially the same as the ones generically parametrized by, respectively, \#12, \#20, \#23 is argued in the last part of the proof of Theorem~\ref{thm:1parameterfamilyconics}.
\end{proof}

\begin{remark}
The divisor $\overline{\mathcal{D}}^{\mathrm{sd}}(3,6)\subseteq\overline{\mathcal{M}}^{\mathrm{sd}}(3,6)$ is not Cartier. To argue this, consider a point in $\overline{\mathbf{X}}_{\mathrm{GP}}(3,6)^\nu$ parametrizing a degeneration $(X,C;p_1,\ldots,p_6)$ of type \#12 (see Figure~\ref{fig:example_marked_conic}). If $\overline{\mathcal{D}}^{\mathrm{sd}}(3,6)$ is Cartier, then $\overline{\mathcal{D}}^{\mathrm{sd}}(3,6)\cap X=C$ is a Cartier divisor on $X$. However, this cannot be because $C$ restricts to the irreducible component of $X$ isomorphic to $\mathbb{P}^1\times\mathbb{P}^1$ giving a point, which is not codimension $1$.
\end{remark}

\subsection{Comparison with \texorpdfstring{$\overline{\mathrm{M}}_{0,6}$}{barM06}}

\begin{theorem}
\label{thm:birat-morph-to-M06bar-and-exc-loci}
There exists a birational morphism  $g\colon\overline{\mathbf{S}}_{\mathrm{GP}}(3,6)^\nu\rightarrow\overline{\mathrm{M}}_{0,6}$ extending the natural isomorphism on the interiors $\mathbf{S}(3,6)\cong\mathrm{M}_{0,6}$. The maximal exceptional loci are the boundary strata of $\overline{\mathbf{S}}_{\mathrm{GP}}(3,6)^\nu$ of Types \#12, \#20, \#23, \#20.2, \#20.6 in Table~\ref{tbl:GPdegenerationsself-dual-1}.
\end{theorem}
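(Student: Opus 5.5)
The morphism $g$ will come from the family of stable $6$-pointed rational curves $(\overline{\mathcal{D}}^{\mathrm{sd}}(3,6);s_1^{\mathrm{sd}},\ldots,s_6^{\mathrm{sd}})\rightarrow\overline{\mathbf{S}}_{\mathrm{GP}}(3,6)^\nu$ of Theorem~\ref{thm:main-theorem-about-family-of-degenerate-conics}. This family is proper and flat, and by that theorem each fiber is a reduced connected nodal rational curve with six distinct smooth markings, stable in the Deligne--Mumford--Knudsen sense. Hence it is a family of stable $6$-pointed genus zero curves, and by the universal property of $\overline{\mathrm{M}}_{0,6}$ it induces a morphism $g$. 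Over $\mathbf{S}(3,6)$ the fiber is the smooth conic through $p_1,\ldots,p_6$, so $g$ restricts to the natural identification $\mathbf{S}(3,6)\cong\mathrm{M}_{0,6}$ used in the proof of Proposition~\ref{prop:widehatS(3,6)-is-normal-and-tropical}. Since both spaces are irreducible of dimension $3$ and agree on a dense open set, $g$ is birational.

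An alternative and more combinatorial route, which I would use to identify exceptional loci, is tropical. By \cite[Theorem~6.5]{GHSV24}, $\Sigma^{\mathrm{sd}}(3,6)\cong\mathrm{trop}(\mathrm{Gr}(2,6))$, and the tropical compactification associated to $\mathrm{trop}(\mathrm{Gr}(2,6))=\mathrm{trop}(\mathrm{M}_{0,6})$ is $\overline{\mathrm{M}}_{0,6}$ \cite{Tev07}. Under this isomorphism $\widehat{\Sigma}^{\mathrm{sd}}(3,6)$ is a refinement, so the identity on $H$ gives a toric morphism $Y_{\widehat{\Sigma}^{\mathrm{sd}}(3,6)}^H\rightarrow Y^H_{\Sigma^{\mathrm{sd}}(3,6)}$. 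Restricted to the closures of $\mathbf{S}(3,6)$, this is a proper birational morphism $\widehat{\mathbf{S}}(3,6)\rightarrow\overline{\mathrm{M}}_{0,6}$. By Theorem~\ref{thm:SGP36} the source is $\overline{\mathbf{S}}_{\mathrm{GP}}(3,6)^\nu$, and this morphism agrees with $g$ on the interior, hence everywhere by separatedness.

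For the exceptional loci, I use the standard toric fact for tropical compactifications: the stratum attached to a cone $\sigma\in\widehat{\Sigma}^{\mathrm{sd}}(3,6)$ is contracted precisely when the minimal cone $\tau\in\Sigma^{\mathrm{sd}}(3,6)$ containing $\sigma$ has larger dimension. Its image is then the stratum of $\tau$, so the fiber dimension is $\dim\tau-\dim\sigma$. I would then run through Table~\ref{tab:correspondencekapranov'sstrataandcones_selfdual}.

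Rays of types F and $\mathrm{E}^{\mathrm{sd}}$, and the cone $\mathrm{E}^{\mathrm{sd}}$F, are cones of $\Sigma^{\mathrm{sd}}(3,6)$ (types \#5, \#2, \#10), so they are not contracted. A ray $c$ lies in the relative interior of a 2-dimensional FF cone, which gives \#12. A ray $d$ lies in the interior of an FFF cone, which gives \#20. The cones CF and CE$^{\mathrm{sd}}$F are top dimensional in FF and E$^{\mathrm{sd}}$FF respectively, so they are not contracted. The cone CE$^{\mathrm{sd}}$ lies in the interior of an E$^{\mathrm{sd}}$FF cone, which gives \#23. DF and CD are 2-dimensional and interior to the FFF cone, which gives \#20.2 and \#20.6. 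Finally CDF is maximal.

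Among these contracted strata, I would check maximality by noting that the remaining contracted strata lie in closures of others. For example, the 2-dimensional cones CD and DF contain the ray $d$ but are themselves interior to FFF, so their strata are curves mapped to points and are not contained in the \#20 divisor's... The main obstacle is precisely this bookkeeping of which contracted strata are maximal versus contained in other exceptional loci, and matching the interior/relative-interior claims with the explicit vector identities $c=f+f$ and $d=f+f+f$. Those identities make the interior containments immediate.
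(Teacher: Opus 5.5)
Your proposal is correct, and your route to the exceptional loci differs from the paper's.

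\textbf{The construction of $g$.} The paper defines $g$ from the family $\overline{\mathcal{D}}^{\mathrm{sd}}(3,6)$ exactly as you do. It then proves by hand that $g|_{\mathbf{S}(3,6)}$ is an isomorphism: surjectivity via the Veronese embedding, and injectivity because an isomorphism of conics extends to $\mathrm{PGL}_3$. It also checks, via the parametrization $[a^2:ab:b^2]$, that $g^{-1}$ on the interior is the Pachter--Speyer map.

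\textbf{The exceptional loci in the paper.} The paper does not use the fan refinement uniformly. It computes $\dim S$ from the cone and $\dim g(S)$ from the combinatorial type of the limit stable curve found in Theorem~\ref{thm:1parameterfamilyconics}. This gives $2\to2$, $2\to2$, $1\to1$, $2\to1$, $2\to0$, $1\to0$ for \#2, \#5, \#10, \#12, \#20, \#23. It then handles \#20.2 and \#20.6 by containment in a \#20 stratum, and discards \#20.3 and \#23.1 by dimension. Only for \#12.1, where the dimension count is inconclusive, does it invoke the argument you use throughout: CF refines an FF cone of $\Sigma^{\mathrm{sd}}(3,6)\cong\mathrm{trop}(\mathrm{Gr}(2,6)^\circ)$.

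\textbf{Your route, and what each buys.} You identify $g$ with the restriction of the toric morphism $Y^H_{\widehat{\Sigma}^{\mathrm{sd}}(3,6)}\to Y^H_{\Sigma^{\mathrm{sd}}(3,6)}$. Since the refined closure is the base change of the coarse one by flatness of the multiplication map, you read off every image and fiber dimension from the minimal cone of $\Sigma^{\mathrm{sd}}(3,6)$ containing each cone. Your case list reproduces the paper's classification exactly. Your route is uniform and independent of the explicit shape of the degenerate conics. The paper's route ties the contraction behaviour to the modular description in Tables~\ref{tbl:GPdegenerationsself-dual-1} and \ref{tbl:GPdegenerationsself-dual-2}, which doubles as a consistency check.

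\textbf{Two points need tightening.}

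First, your identification of $g$ with the toric morphism hinges on $g|_{\mathbf{S}(3,6)}$ being inverse to the Pachter--Speyer isomorphism. You assert this but do not check it. The check is one line: the $3\times3$ minors of the Veronese matrix with columns $(a_i^2,a_ib_i,b_i^2)$ are $\pm p_{ij}p_{ik}p_{jk}$, where $p_{ij}=a_ib_j-a_jb_i$. This same check also gives injectivity on the interior and birationality with no further input.

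Second, your last paragraph is confused, and its unfinished claim is false. The cones CD and DF have $d$ as a face, so the \#20.6 and \#20.2 strata do lie in the closure of the \#20 divisor. Likewise CE$^{\mathrm{sd}}$ has $c$ as a face, so a \#23 stratum lies in the closure of a \#12 stratum. Hence ``maximal'' in the statement cannot mean inclusion-maximal; otherwise only \#12 and \#20 would survive. The list should be read, as in the paper's proof, as the strata $S$ with $\dim g(S)<\dim S$. That is exactly what your criterion $\dim\tau>\dim\sigma$ detects, so no further bookkeeping is needed.
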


\begin{proof}
The family $\overline{\mathcal{D}}^{\mathrm{sd}}(3,6)\rightarrow\overline{\mathbf{S}}_{\mathrm{GP}}(3,6)^\nu$ in Theorem~\ref{thm:main-theorem-about-family-of-degenerate-conics} together with the sections $s_1^{\mathrm{sd}},\ldots,s_6^{\mathrm{sd}}$ is flat by its construction in Section~\ref{sec:construction-of-the-global-family-of-degenerate-conics} and its fibers are stable $6$-pointed rational curves by Theorem~\ref{thm:1parameterfamilyconics}. Hence, this family induces a unique morphism $g\colon\overline{\mathbf{S}}_{\mathrm{GP}}(3,6)^\nu\rightarrow\overline{\mathrm{M}}_{0,6}$ whose geometric interpretation is as follows. If $x\in\overline{\mathbf{S}}_{\mathrm{GP}}(3,6)^{\nu}$ parametrizes $(X,C;p_1,\ldots,p_6)$, then $g(x)$ parametrizes $(C;p_1,\ldots,p_6)$.

Next, we check that $g$ induces an isomorphism between $\mathbf{S}(3,6)$ and $\mathrm{M}_{0,6}$ (from the proof of Proposition~\ref{prop:widehatS(3,6)-is-normal-and-tropical} we already know that they are isomorphic, but here we need to see that this is compatible with $g$). For any $y\in\mathrm{M}_{0,6}$ parametrizing $(\mathbb{P}^1;p_1,\ldots,p_6)$, if we denote by $v\colon\mathbb{P}^1\rightarrow\mathbb{P}^2$ the degree $2$ Veronese embedding, then we have that $y=g(x)$ where $x$ parametrizes $(\mathbb{P}^2,v(\mathbb{P}^1);v(p_1),\ldots,v(p_6))$. Hence, $g$ satisfies $g(\mathbf{S}(3,6))=\mathrm{M}_{0,6}$. Moreover, $g$ is injective on $\mathbf{S}(3,6)$. Suppose $x_1,x_2$ parametrizing $(\mathbb{P}^2,C_1;p_1,\ldots,p_6),(\mathbb{P}^2,C_2;q_1,\ldots,q_6)$ satisfy $g(x_1)=g(x_2)$. Hence, $\varphi\colon C_1\xrightarrow{\cong}C_2$ and $\varphi(p_i)=q_i$ for $i=1,\ldots,6$. By \cite[Lemma~3.9]{Cas26}, we have that $\varphi=\psi|_{C_1}$ for some $\psi\in\mathrm{PGL}_3$, and hence $(\mathbb{P}^2,C_1;p_1,\ldots,p_6)\cong(\mathbb{P}^2,C_2;q_1,\ldots,q_6)$, implying that $x_1=x_2$. In conclusion, $g$ is a birational morphism and it induces an isomorphism between $\mathbf{S}(3,6)$ and $\mathrm{M}_{0,6}$.

Finally, to understand which boundary strata $S\subseteq\overline{\mathbf{S}}_{\mathrm{GP}}(3,6)^{\nu}$ are exceptional with respect to $g$, it is enough to go through all the possibilities for the type of $S$, which are illustrated in the proof of Theorem~\ref{thm:1parameterfamilyconics}.

By Table~\ref{tab:correspondencekapranov'sstrataandcones_selfdual} we can compute $\dim(S)$, because the number of rays generating the corresponding simplicial cone, gives the codimension of $S$ in $\overline{\mathbf{S}}_{\mathrm{GP}}(3,6)^{\nu}$ \cite{Tev07}. On the other hand, let $(X,C;p_1,\ldots,p_6)$ be the generic object parametrized by $S$. Then, $(C;p_1,\ldots,p_6)$ is described in Theorem~\ref{thm:1parameterfamilyconics}. Since $g\colon\overline{\mathbf{S}}_{\mathrm{GP}}(3,6)\rightarrow\overline{\mathrm{M}}_{0,6}$ is surjective, we have that $g(S)$ equals the boundary stratum of $\overline{\mathrm{M}}_{0,6}$ generically parametrizing $(C;p_1,\ldots,p_6)$, and hence $\dim(g(S))$ can be computed as follows:
\begin{table}[ht!]
\begin{tabular}{l|c|c|c|c|c|c}
Type&\#2&\#5&\#10&\#12&\#20&\#23\\
\hline
$\dim(S)$&$2$&$2$&$1$&$2$&$2$&$1$\\
\hline
$\dim(g(S))$&$2$&$2$&$1$&$1$&$0$&$0$
\end{tabular}
\end{table}

From this we see that the boundary strata of types \#2, \#5, \#10 are not in the exceptional locus of $g$, while types \#12, \#20, \#23 are exceptional. The boundary strata of types \#20.2 and \#20.6 are also exceptional because they are $1$-dimensional and contained in boundary strata of type \#20, which are each contracted to a point. The boundary strata of types \#20.3 and \#23.1 are $0$-dimensional, so they cannot be exceptional.

Lastly, let us consider a boundary stratum $S$ of type \#12.1. We have that $\dim(S)=1$ and $S$ is contained inside a type \#12 stratum, which is $2$-dimensional and mapped to a curve. As this is not enough to conclude whether $S$ is exceptional, we need more information from the structure of the morphism $g\colon\overline{\mathbf{S}}_{\mathrm{GP}}(3,6)\rightarrow\overline{\mathrm{M}}_{0,6}$.

Let us consider the restriction to the interiors $\gamma:=g|_{\mathbf{S}(3,6)}\colon\mathbf{S}(3,6)\rightarrow\mathrm{M}_{0,6}$, which we proved is an isomorphism. If $(\mathbb{P}^2,C;p_1,\ldots,p_6)$ is parametrized by $x\in\mathbf{S}(3,6)$, then up to $\mathrm{PGL}_3$ we can suppose that $C$ is the conic $x_1x_3-x_1^2=0$ and that $p_1,\ldots,p_6\in C$ are in the form $[a_1^2:a_1b_1:b_1^2],\ldots,[a_6^2:a_6b_6:b_6^2]$. Then, $\gamma(x)\in\mathrm{M}_{0,6}$ parametrizes, up to $\mathrm{PGL}_2$, $(\mathbb{P}^1;[a_1:b_1],\ldots,[a_6:b_6])$. From this we see that $\gamma^{-1}\colon\mathrm{M}_{0,6}\rightarrow\mathbf{S}(3,6)$ equals the isomorphism induced by the Pachter--Speyer map in the proof of Proposition~\ref{prop:widehatS(3,6)-is-normal-and-tropical} (see in particular the matrix form of this map in \cite[Section~3]{PS04}).

By \cite[Theorem~6.5]{GHSV24} (and the discussion afterwards), we know that there is a linear isomorphism between the fans $\Sigma^\mathrm{sd}(3,6)$ and $\trop(\Gr(2,6)^\circ)$ sending cones to cones. In particular, an FF cone of $\Sigma^\mathrm{sd}(3,6)$ is mapped to a cone of the same dimension in $\trop(\Gr(2,6)^\circ)$. From Table~\ref{tab:correspondencekapranov'sstrataandcones_selfdual} we see that a type \#12.1 stratum $S$ of $\overline{\mathbf{S}}_{\mathrm{GP}}(3,6)^\nu$ corresponds to a CF cone in $\widehat{\Sigma}^{\mathrm{sd}}(3,6)$, which is obtained by splitting an FF cone of $\Sigma^{\mathrm{sd}}(3,6)$. So, the stratum $S$ has to map surjectively to the $1$-dimensional stratum of $\overline{\mathrm{M}}_{0,6}$ corresponding to the FF cone in $\Sigma^\mathrm{sd}(3,6)$, which means $S$ is not exceptional.
\end{proof}

\begin{table}[hbtp]
\centering
\caption{Up to permuting the labels, the $6$-pointed Mustafin degenerations parametrized by $\overline{\mathbf{S}}_{\mathrm{GP}}(3,6)^\nu$ with the degenerate conic. Continues in Table~\ref{tbl:GPdegenerationsself-dual-2}.}
\label{tbl:GPdegenerationsself-dual-1}
\begin{tabular}{|>{\centering\arraybackslash}m{4.5cm}|>{\centering\arraybackslash}m{4.5cm}|>{\centering\arraybackslash}m{4.5cm}|}
\hline
\begin{tikzpicture}[scale=0.5]

	\draw[line width=1pt] (0,6) -- (6,6);
	\draw[line width=1pt] (6,6) -- (6,0);
	\draw[line width=1pt] (6,0) -- (0,6);
	\draw[line width=1pt] (4.5,6) -- (4.5,1.5);
	\draw[line width=1pt] (1.5,4.5) -- (6,4.5);
	\fill[gray!50,nearly transparent] (1.5,4.5) -- (4.5,4.5) -- (4.5,1.5) -- cycle;
	\fill[gray!50,nearly transparent] (1.5,4.5) -- (4.5,4.5) -- (4.5,6) -- (0,6) -- cycle;
	\fill[gray!50,nearly transparent] (4.5,1.5) -- (4.5,4.5) -- (6,4.5) -- (6,0) -- cycle;
	\fill (6,0) circle (5pt);
	\fill (0,6) circle (5pt);
	\fill (5.5,1.5) circle (5pt);
	\fill (5.6,3) circle (5pt);
	\fill (1.5,5.5) circle (5pt);
	\fill (3,5.6) circle (5pt);

	\node at (-.5,6) {$1$};
	\node at (1.7,5) {$2$};
	\node at (3.2,5.1) {$3$};
	\node at (6.5,0) {$4$};
	\node at (5,1.7) {$5$};
	\node at (5.1,3.2) {$6$};

	\node at (1,2) {\#2};

\end{tikzpicture}
&
\begin{tikzpicture}[scale=0.5]

	\draw[line width=1pt] (0,6) -- (6,6);
	\draw[line width=1pt] (6,6) -- (6,0);
	\draw[line width=1pt] (6,0) -- (0,6);
	\draw[line width=1pt] (3,6) -- (6,3);
	\fill[gray!50,nearly transparent] (0,6) -- (6,6) -- (6,0) -- cycle;
	\fill (6,0) circle (5pt);
	\fill (6,6) circle (5pt);
	\fill (0,6) circle (5pt);
	\fill (2,5) circle (5pt);
	\fill (5.5,5) circle (5pt);
	\fill (4,3.5) circle (5pt);

	\node at (-.5,6) {$1$};
	\node at (2.5,5) {$2$};
	\node at (3.5,3.5) {$3$};
	\node at (6.5,0) {$4$};
	\node at (6.5,6) {$5$};
	\node at (5.5,4.4) {$6$};

	\node at (1,2) {\#5};

\end{tikzpicture}
&
\begin{tikzpicture}[scale=0.5]

	\draw[line width=1pt] (0,6) -- (6,6);
	\draw[line width=1pt] (6,6) -- (6,0);
	\draw[line width=1pt] (6,0) -- (0,6);
	\draw[line width=1pt] (18/4,6) -- (18/4,6/4);
	\draw[line width=1pt] (12/4,6) -- (12/4,12/4);
	\draw[line width=1pt] (6,6-6/4) -- (6/4,6-6/4);
	\fill[gray!50,nearly transparent] (6/4,6-6/4) -- (6,6-6/4) -- (6,0) -- cycle;
	\fill[gray!50,nearly transparent] (0,6) -- (12/4,6) -- (12/4,6-6/4) -- (6/4,6-6/4) --cycle;
	\fill (6,0) circle (5pt);
	\fill (0,6) circle (5pt);
	\fill (5.5,3.5) circle (5pt);
	\fill (1.8,5.2) circle (5pt);
	\fill (15/4,5.2) circle (5pt);
	\fill (5.5,1.8) circle (5pt);

	\node at (5,3.5) {$4$};
	\node at (2.3,5.5) {$5$};
	\node at (-.5,6) {$6$};
	\node at (4.2,5.5) {$1$};
	\node at (5,1.8) {$2$};
	\node at (6.5,0) {$3$};

	\node at (1,2) {\#10};

\end{tikzpicture}
\\
\hline
\begin{tikzpicture}[scale=0.5]

\draw [line width=1.pt] (0.,0.)-- (5.,3.);
\draw [line width=1.pt] (3.,3.)-- (8.,0.);
\draw (3.3,1.3) node {$4$};
\draw (4.6,1.3) node {$1$};
\draw (5.6,0.6) node {$2$};
\draw (6.6,0.1) node {$3$};
\draw (2.3,0.6) node {$5$};
\draw (1.3,0.1) node {$6$};

\draw [fill=black] (2.,1.2) circle (4.0pt);
\draw [fill=black] (3.,1.8) circle (4.0pt);
\draw [fill=black] (1.,0.6) circle (4.0pt);
\draw [fill=black] (6.,1.2) circle (4.0pt);
\draw [fill=black] (5.,1.8) circle (4.0pt);
\draw [fill=black] (7.,0.6) circle (4.0pt);

\end{tikzpicture}
&
\begin{tikzpicture}[scale=0.5]
\draw [line width=1.pt] (0.,0.)-- (5.,3.);
\draw [line width=1.pt,color=lightgray] (3.,3.)-- (8.,0.);
\draw (3.5,1.4) node {$1$};
\draw (2.7,1.0) node {$2$};
\draw (1.9,0.5) node {$3$};
\draw (1.0,0.1) node {$4$};
\draw (5,1.1) node {$5$};
\draw (6.3,0.4) node {$6$};

\draw [fill=black] (3.2,1.92) circle (4.0pt);
\draw [fill=black] (5.333333333333331,1.6) circle (4.0pt);
\draw [fill=black] (2.4,1.44) circle (4.0pt);
\draw [fill=black] (1.6,0.96) circle (4.0pt);
\draw [fill=black] (6.666666666666666,0.8) circle (4.0pt);
\draw [fill=black] (0.8,0.48) circle (4.0pt);

\end{tikzpicture}
&
\begin{tikzpicture}[scale=0.5]
\draw (7.5,-3.6) node {$1$};
\draw (9.86,-4.2) node {$2$};
\draw (10.44,-4.9) node {$3$};
\draw (11.,-5.6) node {$4$};
\draw (4.8,-4.5) node {$5$};
\draw (4.1,-5.4) node {$6$};
\draw [line width=1.pt, color = lightgray] (3.,-6.)-- (6.,-2.);
\draw [line width=1.pt] (9.,-2.)-- (12.,-6.);
\draw [line width=1.pt] (4.,-3.)-- (11.,-3.);

\draw [fill=black] (7.5,-3.) circle (4.0pt);
\draw [fill=black] (10.2888,-3.7184) circle (4.0pt);
\draw [fill=black] (10.875,-4.5) circle (4.0pt);
\draw [fill=black] (11.4375,-5.25) circle (4.0pt);
\draw [fill=black] (4.4832,-4.0224) circle (4.0pt);
\draw [fill=black] (3.756,-4.992) circle (4.0pt);

\end{tikzpicture}
\\
\hline
\hline
\begin{tikzpicture}[scale=0.5]

	\draw[line width=1pt] (0,6) -- (6,6);
	\draw[line width=1pt] (6,6) -- (6,0);
	\draw[line width=1pt] (6,0) -- (0,6);
	\draw[line width=1pt] (12/4,6) -- (12/4,6-6/4);
	\draw[line width=1pt] (12/4,6-6/4) -- (18/4,6-12/4);
	\draw[line width=1pt] (18/4,6-12/4) -- (6,6-12/4);
	\draw[line width=1pt] (6-6/4,6-12/4) -- (6-6/4,6/4);
	\draw[line width=1pt] (6-12/4,6-6/4) -- (6/4,6-6/4);
	\fill[gray!50,nearly transparent] (0,6) -- (6,6) -- (6,0) -- cycle;
	\fill (6,0) circle (5pt);
	\fill (0,6) circle (5pt);
	\fill (6,6) circle (5pt);
	\fill (2,5.2) circle (5pt);
	\fill (5,5) circle (5pt);
	\fill (5.2,2) circle (5pt);

	\node at (6.5,6) {$1$};
	\node at (4.5,4.5) {$2$};
	\node at (-.5,6) {$3$};
	\node at (2.5,5.5) {$4$};
	\node at (6.5,0) {$5$};
	\node at (5.5,2.5) {$6$};

	\node at (1,2) {\#12};

\end{tikzpicture}
&
\begin{tikzpicture}[scale=0.5]

	\draw[line width=1pt] (0,6) -- (6,6);
	\draw[line width=1pt] (6,6) -- (6,0);
	\draw[line width=1pt] (6,0) -- (0,6);
	\draw[line width=1pt] (2,6) -- (3,5);
	\draw[line width=1pt] (3,5) -- (4,5);
	\draw[line width=1pt] (4,5) -- (4,6);
	\draw[line width=1pt] (4,5) -- (5,4);
	\draw[line width=1pt] (5,4) -- (6,4);
	\draw[line width=1pt] (5,4) -- (5,3);
	\draw[line width=1pt] (5,3) -- (6,2);
	\draw[line width=1pt] (5,3) -- (4,3);
	\draw[line width=1pt] (4,3) -- (4,2);
	\draw[line width=1pt] (4,3) -- (3,4);
	\draw[line width=1pt] (3,4) -- (3,5);
	\draw[line width=1pt] (3,4) -- (2,4);
	\fill[gray!50,nearly transparent] (2,6) -- (4,6) -- (4,5) -- (3,5) -- cycle;
	\fill[gray!50,nearly transparent] (6,2) -- (6,4) -- (5,4) -- (5,3) -- cycle;
	\fill[gray!50,nearly transparent] (2,4) -- (3,4) -- (4,3) -- (4,2) -- cycle;
	\fill[gray!50,nearly transparent] (0,6) -- (2,6) -- (3,5) -- (3,4) -- (2,4) --cycle;
	\fill[gray!50,nearly transparent] (6,0) -- (6,2) -- (5,3) -- (4,3) -- (4,2) --cycle;
	\fill[gray!50,nearly transparent] (4,6) -- (6,6) -- (6,4) -- (5,4) -- (4,5) --cycle;
	\fill (6,0) circle (5pt);
	\fill (0,6) circle (5pt);
	\fill (6,6) circle (5pt);
	\fill (5,2) circle (5pt);
	\fill (5,5) circle (5pt);
	\fill (2,5) circle (5pt);

	\node at (-.5,6) {$1$};
	\node at (1.5,5.3) {$2$};
	\node at (6.5,6) {$3$};
	\node at (5.5,4.9) {$4$};
	\node at (6.5,0) {$5$};
	\node at (5.3,1.5) {$6$};

	\node at (1,2) {\#20};

\end{tikzpicture}
&
\begin{tikzpicture}[scale=0.5]

	\draw[line width=1pt] (0,6) -- (6,6);
	\draw[line width=1pt] (6,6) -- (6,0);
	\draw[line width=1pt] (6,0) -- (0,6);
	\draw[line width=1pt] (1,5) -- (6,5);
	\draw[line width=1pt] (5,6) -- (5,1);
	\draw[line width=1pt] (2,4) -- (3,4);
	\draw[line width=1pt] (3,6) -- (3,4);
	\draw[line width=1pt] (3,4) -- (4,3);
	\draw[line width=1pt] (4,3) -- (4,2);
	\draw[line width=1pt] (4,3) -- (6,3);
	\fill[gray!50,nearly transparent] (0,6) -- (3,6) -- (3,4) -- (2,4) -- (1,5) -- cycle;
	\fill[gray!50,nearly transparent] (6,0) -- (6,3) -- (4,3) -- (4,2) -- (5,1) -- cycle;
	\fill[gray!50,nearly transparent] (3,4) -- (3,5) -- (5,5) -- (5,3) -- (4,3) -- cycle;
	\fill[gray!50,nearly transparent] (2,4) -- (3,4) -- (4,3) -- (4,2) -- cycle;
	\fill (6,0) circle (5pt);
	\fill (0,6) circle (5pt);
	\fill (5.5,4) circle (5pt);
	\fill (4,5.5) circle (5pt);
	\fill (1.5,5.5) circle (5pt);
	\fill (5.5,1.5) circle (5pt);

	\node at (2.1,5.5) {$6$};
	\node at (-.5,6) {$5$};
	\node at (4.6,5.6) {$2$};
	\node at (6.5,0) {$3$};
	\node at (5.7,4.6) {$1$};
	\node at (5.5,2.1) {$4$};

	\node at (1,2) {\#23};

\end{tikzpicture}
\\
\hline
\begin{tikzpicture}[scale=0.5]
\draw (3.9,-5.6) node {$4$};
\draw (6.7,-3.7) node {$1$};
\draw (8.4,-3.7) node {$2$};
\draw (4.8,-4.5) node {$3$};
\draw (10.1,-4.5) node {$5$};
\draw (10.9,-5.6) node {$6$};
\draw [line width=1.pt,color=lightgray] (3.,-6.)-- (6.,-2.);
\draw [line width=1.pt,color=lightgray] (9.,-2.)-- (12.,-6.);
\draw [line width=1.pt] (4.,-3.)-- (11.,-3.);

\draw [fill=black] (6.625,-3.) circle (4.0pt);
\draw [fill=black] (8.375,-3.) circle (4.0pt);
\draw [fill=black] (4.4904,-4.0128) circle (4.0pt);
\draw [fill=black] (3.7536,-4.9952) circle (4.0pt);
\draw [fill=black] (10.5168,-4.0224) circle (4.0pt);
\draw [fill=black] (11.244,-4.992) circle (4.0pt);

\end{tikzpicture}
&
\begin{tikzpicture}[scale=0.5]
\draw (8.0,-5.0) node {$4$};
\draw (5.5,-4.0) node {$1$};
\draw (5.5,-5.0) node {$2$};
\draw (8.0,-4.0) node {$3$};
\draw (10.5,-4.0) node {$5$};
\draw (10.5,-5.0) node {$6$};
\draw [line width=1.pt, color = black] (4.,-3.)-- (11.,-3.);
\draw [line width=1.pt, color = lightgray] (5.,-2.)-- (5.,-6.);
\draw [line width=1.pt, color = lightgray] (10.,-2.)-- (10.,-6.);
\draw [line width=1.pt, color = lightgray] (7.5,-2.)-- (7.5,-6.);

\draw [fill=black] (5.,-4.013611788132217) circle (4.0pt);
\draw [fill=black] (5.,-4.994667521093667) circle (4.0pt);
\draw [fill=black] (7.5,-4.017594265232974) circle (4.0pt);
\draw [fill=black] (7.5,-4.993599487343801) circle (4.0pt);
\draw [fill=black] (10.,-4.021576742333731) circle (4.0pt);
\draw [fill=black] (10.,-4.9925314535939345) circle (4.0pt);

\end{tikzpicture}
&
\begin{tikzpicture}[scale=0.5]

\draw (11.1,-5.7) node {$4$};
\draw (8.9,-4.0) node {$1$};
\draw (6.0,-4.0) node {$2$};
\draw (10.62,-4.8) node {$3$};
\draw (4.3,-4.8) node {$5$};
\draw (3.8,-5.7) node {$6$};
\draw [color = lightgray,line width=1.pt] (3.,-6.)-- (4.605108332342063,-2.7897833353158727);
\draw [line width=1.pt] (3.1931309156043723,-3.947327932107899)-- (8.464763821237732,-2.5096098669351647);
\draw [line width=1.pt] (6.535236178762268,-2.5096098669351647)-- (11.806869084395656,-3.947327932107905);
\draw [color= lightgray,line width=1.pt] (10.394891667657935,-2.7897833353158736)-- (12.,-6.);

\draw [fill] (5.828947368421053,-3.2284688995215314) circle (4.0pt);
\draw [fill] (9.171052631578949,-3.2284688995215314) circle (4.0pt);
\draw [fill] (3.771929824561402,-4.456140350877196) circle (4.0pt);
\draw [fill] (3.3859649122807,-5.2280701754386) circle (4.0pt);
\draw [fill] (11.228070175438598,-4.456140350877197) circle (4.0pt);
\draw [fill] (11.6140350877193,-5.228070175438599) circle (4.0pt);

\end{tikzpicture}
\\
\hline
\end{tabular}
\end{table}

\begin{table}[hbtp]
\centering
\caption{Continuation of Table~\ref{tbl:GPdegenerationsself-dual-1}. For \#12.1, \#20.$i$, \#23.1 the $6$-pointed degenerate conic is equal to \#12, \#20 and \#23, respectively.}
\label{tbl:GPdegenerationsself-dual-2}
\begin{tabular}{|>{\centering\arraybackslash}m{4cm}|>{\centering\arraybackslash}m{4cm}|>{\centering\arraybackslash}m{4cm}|}
\hline
\begin{tikzpicture}[scale=0.5]

	\draw[line width=1pt] (0,6) -- (6,6);
	\draw[line width=1pt] (6,6) -- (6,0);
	\draw[line width=1pt] (6,0) -- (0,6);
	\draw[line width=1pt] (12/4,6) -- (12/4,6-6/4);
	\draw[line width=1pt] (12/4,6-6/4) -- (18/4,6-12/4);
	\draw[line width=1pt] (18/4,6-12/4) -- (6,6-12/4);
	\draw[line width=1pt] (6-6/4,6-12/4) -- (6-6/4,6/4);
	\draw[line width=1pt] (6-12/4,6-6/4) -- (6/4,6-6/4);
	\draw[line width=1pt] (3,3) -- (3+6/4,3);
	\fill[gray!50,nearly transparent] (0,6) -- (12/4,6) -- (12/4,6-6/4) -- (6/4,6-6/4) -- cycle;
	\fill[gray!50,nearly transparent] (6,0) -- (6,12/4) -- (3,12/4) -- cycle;
	\fill[gray!50,nearly transparent] (12/4,6) -- (6,6) -- (6,3) -- (6-6/4,3) -- (12/4,6-6/4) -- cycle;
	\fill (6,0) circle (5pt);
	\fill (0,6) circle (5pt);
	\fill (6,6) circle (5pt);
	\fill (2,5.2) circle (5pt);
	\fill (5,5) circle (5pt);
	\fill (5.2,2) circle (5pt);

	\node at (6.5,6) {$1$};
	\node at (4.5,4.5) {$2$};
	\node at (-.5,6) {$3$};
	\node at (2.5,5.5) {$4$};
	\node at (6.5,0) {$5$};
	\node at (5.5,2.5) {$6$};

	\node at (1,2) {\#12.1};

\end{tikzpicture}
&
\begin{tikzpicture}[scale=0.5]

	\draw[line width=1pt] (0,6) -- (6,6);
	\draw[line width=1pt] (6,6) -- (6,0);
	\draw[line width=1pt] (6,0) -- (0,6);
	\draw[line width=1pt] (2,6) -- (3,5);
	\draw[line width=1pt] (3,5) -- (4,5);
	\draw[line width=1pt] (4,5) -- (4,6);
	\draw[line width=1pt] (4,5) -- (5,4);
	\draw[line width=1pt] (5,4) -- (6,4);
	\draw[line width=1pt] (5,4) -- (5,3);
	\draw[line width=1pt] (5,3) -- (6,2);
	\draw[line width=1pt] (5,3) -- (4,3);
	\draw[line width=1pt] (4,3) -- (4,2);
	\draw[line width=1pt] (4,3) -- (3,4);
	\draw[line width=1pt] (3,4) -- (3,5);
	\draw[line width=1pt] (3,4) -- (2,4);
	\draw[line width=1pt] (3,3) -- (4,3);
	\draw[line width=1pt] (5,3) -- (6,3);
	\fill[gray!50,nearly transparent] (2,6) -- (4,6) -- (4,5) -- (3,5) -- cycle;
	\fill[gray!50,nearly transparent] (5,3) -- (6,3) -- (6,2) -- cycle;
	\fill[gray!50,nearly transparent] (3,3) -- (4,3) -- (4,2) -- cycle;
	\fill[gray!50,nearly transparent] (0,6) -- (2,6) -- (3,5) -- (3,4) -- (2,4) --cycle;
	\fill[gray!50,nearly transparent] (6,0) -- (6,2) -- (5,3) -- (4,3) -- (4,2) --cycle;
	\fill[gray!50,nearly transparent] (4,6) -- (6,6) -- (6,4) -- (5,4) -- (4,5) --cycle;
	\fill (6,0) circle (5pt);
	\fill (0,6) circle (5pt);
	\fill (6,6) circle (5pt);
	\fill (5,2) circle (5pt);
	\fill (5,5) circle (5pt);
	\fill (2,5) circle (5pt);

	\node at (-.5,6) {$1$};
	\node at (1.5,5.3) {$2$};
	\node at (6.5,6) {$3$};
	\node at (5.5,4.9) {$4$};
	\node at (6.5,0) {$5$};
	\node at (5.3,1.5) {$6$};

	\node at (1,2) {\#20.2};

\end{tikzpicture}
&
\begin{tikzpicture}[scale=0.5]

	\draw[line width=1pt] (0,6) -- (6,6);
	\draw[line width=1pt] (6,6) -- (6,0);
	\draw[line width=1pt] (6,0) -- (0,6);
	\draw[line width=1pt] (2,6) -- (3,5);
	\draw[line width=1pt] (3,5) -- (4,5);
	\draw[line width=1pt] (4,5) -- (4,6);
	\draw[line width=1pt] (4,5) -- (5,4);
	\draw[line width=1pt] (5,4) -- (6,4);
	\draw[line width=1pt] (5,4) -- (5,3);
	\draw[line width=1pt] (5,3) -- (6,2);
	\draw[line width=1pt] (5,3) -- (4,3);
	\draw[line width=1pt] (4,3) -- (4,2);
	\draw[line width=1pt] (4,3) -- (3,4);
	\draw[line width=1pt] (3,4) -- (3,5);
	\draw[line width=1pt] (3,4) -- (2,4);
	\draw[line width=1pt] (3,3) -- (4,3);
	\draw[line width=1pt] (5,3) -- (6,3);
	\draw[line width=1pt] (3,5) -- (3,6);
	\fill[gray!50,nearly transparent] (3,5) -- (3,6) -- (2,6) -- cycle;
	\fill[gray!50,nearly transparent] (5,3) -- (6,3) -- (6,2) -- cycle;
	\fill[gray!50,nearly transparent] (3,3) -- (4,3) -- (4,2) -- cycle;
	\fill[gray!50,nearly transparent] (0,6) -- (2,6) -- (3,5) -- (3,4) -- (2,4) --cycle;
	\fill[gray!50,nearly transparent] (6,0) -- (6,2) -- (5,3) -- (4,3) -- (4,2) --cycle;
	\fill[gray!50,nearly transparent] (4,6) -- (6,6) -- (6,4) -- (5,4) -- (4,5) --cycle;
	\fill (6,0) circle (5pt);
	\fill (0,6) circle (5pt);
	\fill (6,6) circle (5pt);
	\fill (5,2) circle (5pt);
	\fill (5,5) circle (5pt);
	\fill (2,5) circle (5pt);

	\node at (-.5,6) {$1$};
	\node at (1.5,5.3) {$2$};
	\node at (6.5,6) {$3$};
	\node at (5.5,4.9) {$4$};
	\node at (6.5,0) {$5$};
	\node at (5.3,1.5) {$6$};

	\node at (1,2) {\#20.3};

\end{tikzpicture}
\\
\hline
\begin{tikzpicture}[scale=0.5]

	\draw[line width=1pt] (0,6) -- (6,6);
	\draw[line width=1pt] (6,6) -- (6,0);
	\draw[line width=1pt] (6,0) -- (0,6);
	\draw[line width=1pt] (2,6) -- (3,5);
	\draw[line width=1pt] (3,5) -- (4,5);
	\draw[line width=1pt] (4,5) -- (4,6);
	\draw[line width=1pt] (4,5) -- (5,4);
	\draw[line width=1pt] (5,4) -- (6,4);
	\draw[line width=1pt] (5,4) -- (5,3);
	\draw[line width=1pt] (5,3) -- (6,2);
	\draw[line width=1pt] (5,3) -- (4,3);
	\draw[line width=1pt] (4,3) -- (4,2);
	\draw[line width=1pt] (4,3) -- (3,4);
	\draw[line width=1pt] (3,4) -- (3,5);
	\draw[line width=1pt] (3,4) -- (2,4);
	\draw[line width=1pt] (5,3) -- (6,3);
	\draw[line width=1pt] (3,5) -- (3,6);
	\fill[gray!50,nearly transparent] (3,5) -- (3,6) -- (2,6) -- cycle;
	\fill[gray!50,nearly transparent] (5,3) -- (6,3) -- (6,2) -- cycle;
	\fill[gray!50,nearly transparent] (2,4) -- (3,4) -- (4,3) -- (4,2) -- cycle;
	\fill[gray!50,nearly transparent] (0,6) -- (2,6) -- (3,5) -- (3,4) -- (2,4) --cycle;
	\fill[gray!50,nearly transparent] (6,0) -- (6,2) -- (5,3) -- (4,3) -- (4,2) --cycle;
	\fill[gray!50,nearly transparent] (4,6) -- (6,6) -- (6,4) -- (5,4) -- (4,5) --cycle;
	\fill (6,0) circle (5pt);
	\fill (0,6) circle (5pt);
	\fill (6,6) circle (5pt);
	\fill (5,2) circle (5pt);
	\fill (5,5) circle (5pt);
	\fill (2,5) circle (5pt);

	\node at (-.5,6) {$1$};
	\node at (1.5,5.3) {$2$};
	\node at (6.5,6) {$3$};
	\node at (5.5,4.9) {$4$};
	\node at (6.5,0) {$5$};
	\node at (5.3,1.5) {$6$};

	\node at (1,2) {\#20.6};

\end{tikzpicture}
&
\begin{tikzpicture}[scale=0.5]

	\draw[line width=1pt] (0,6) -- (6,6);
	\draw[line width=1pt] (6,6) -- (6,0);
	\draw[line width=1pt] (6,0) -- (0,6);
	\draw[line width=1pt] (1,5) -- (6,5);
	\draw[line width=1pt] (5,6) -- (5,1);
	\draw[line width=1pt] (2,4) -- (3,4);
	\draw[line width=1pt] (3,6) -- (3,4);
	\draw[line width=1pt] (3,4) -- (4,3);
	\draw[line width=1pt] (4,3) -- (4,2);
	\draw[line width=1pt] (4,3) -- (6,3);
	\draw[line width=1pt] (3,3) -- (4,3);
	\fill[gray!50,nearly transparent] (0,6) -- (3,6) -- (3,4) -- (2,4) -- (1,5) -- cycle;
	\fill[gray!50,nearly transparent] (6,0) -- (6,3) -- (4,3) -- (4,2) -- (5,1) -- cycle;
	\fill[gray!50,nearly transparent] (3,4) -- (3,5) -- (5,5) -- (5,3) -- (4,3) -- cycle;
	\fill[gray!50,nearly transparent] (3,3) -- (4,3) -- (4,2) -- cycle;
	\fill (6,0) circle (5pt);
	\fill (0,6) circle (5pt);
	\fill (5.5,4) circle (5pt);
	\fill (4,5.5) circle (5pt);
	\fill (1.5,5.5) circle (5pt);
	\fill (5.5,1.5) circle (5pt);

	\node at (2.1,5.5) {$6$};
	\node at (-.5,6) {$5$};
	\node at (4.6,5.6) {$2$};
	\node at (6.5,0) {$3$};
	\node at (5.7,4.6) {$1$};
	\node at (5.5,2.1) {$4$};

	\node at (1,2) {\#23.1};

\end{tikzpicture}
&
\\
\hline
\end{tabular}
\end{table}

\newcommand{\etalchar}[1]{$^{#1}$}

\end{document}